\documentclass[smallextended]{svjour3}  

\usepackage{graphicx}
\usepackage{amssymb,amsmath}
\usepackage[hidelinks]{hyperref}
\usepackage{mathtools}
\usepackage{bm}
\usepackage{amscd}
\usepackage{stmaryrd}
\usepackage{mathrsfs}
\usepackage{tikz}
\usepackage{booktabs}
\usepackage{hyperref}
\usetikzlibrary{calc,arrows.meta}
\usetikzlibrary{angles,quotes}
\usetikzlibrary{intersections}

\numberwithin{equation}{section}
\numberwithin{table}{section}
\numberwithin{figure}{section}
\numberwithin{theorem}{section}
\numberwithin{lemma}{section}

\DeclarePairedDelimiter{\jump}{\llbracket}{\rrbracket}
\smartqed

\begin{document}
\title{An Adaptive Finite Element Method Based on Generalized Barycentric Coordinates}


\author{Yihui Zhou, Yuwen Li*}


\institute{Y. Zhou \at
              School of Mathematical Sciences, Zhejiang University, 866 Yuhangtang Road, Hangzhou, Zhejiang 310058, People's Republic of China. \\
              \email{22335023@zju.edu.cn}           
           \and
           Y. Li \at
              School of Mathematical Sciences, Zhejiang University, 866 Yuhangtang Road, Hangzhou, Zhejiang 310058, People's Republic of China.\\
              \email{liyuwen@zju.edu.cn}
}

\date{Received: 2026/04/22}

\maketitle

\begin{abstract}
This work derives a posteriori error estimate of polygonal finite element methods based on Wachspress barycentric coordinates. In particular, we prove that the classical residual-based a posteriori error estimator is both an upper and lower bounds for the discretization error. The analysis relies a Scott-Zhang type interpolation and homogeneity arguments for rational functions on polygonal elements. Numerical experiments on square and L-shaped domains demonstrate the effectiveness of the adaptive algorithm.

\keywords{polygonal mesh\and Wachspress barycentric coordinate\and residual-based error estimator\and adaptive finite element method}
\end{abstract}

\maketitle

\markboth{Y. Zhou and Y. Li}{AFEM Based on GBC}

\section{Introduction}
Polygonal meshes have become an important tool in the numerical approximation of partial differential equations, due to their flexibility in handling complicated geometries, local mesh reconstruction, and evolving interfaces. Compared with triangular and quadrilateral meshes, polygonal meshes can approximate irregular boundaries more naturally and are particularly effective in applications involving crack propagation and branching. This flexibility has led to several numerical frameworks on polygonal meshes, including virtual element method \cite{BeiraoBrezziMariniRusso2023},  discontinuous Galerkin methods \cite{ArnoldBrezziCockburnMarini2001,CockburnGopalakrishnanLazarov2009,WangYe2014},  hybrid high order method \cite{DiPietroErn2015}, discrete de Rham method \cite{DiPietro2023}, and finite elements (FEs) based on generalized barycentric coordinates (GBC) \cite{Wachspress1975,Joshi2007,Hormann2008,Floater2010,Weber2012,Floater2016}. In this paper, we shall focus on a posteriori error analysis of GBC-based FEs on polygonal meshes.  

A posteriori error estimates are central to adaptive mesh refinement for resolving singularities of PDEs. Existing approaches in this field include residual-based \cite{Verfurth2003}, equilibrated residual \cite{AinsworthOden2000,Braess2009,ErnVohralik2015,Li2025arxiv}, recovery-based \cite{Zienkiewicz1992b,ZhangNaga2005,Li2018SINUM,BankLi2019,Li2021JSCb}, dual weighted residual \cite{BeckerRannacher2001} and iteration-based \cite{Bank1993,Bank1996,LiShui2026,LiZikatanov2021CAMWA,LiZikatanov2025mcom} a posteriori error analysis. On simplicial meshes, a posteriori error estimation reaches a state of maturity, while its extension to polygonal FEs is nontrivial due to  geometric flexibility of polygonal elements and presence of non-polynomial basis functions. Readers are referred to \cite{BeiraoManzini2015,Cangiani2017,BerroneBorio2017,Beirao2019,DassiGedickeMascotto2022,ChaumontGedickeMascotto2025,Chen2014,Li2019,BertrandCarstensenGrassleTran2023,PorwalSingla2025} for a posteriori error estimates of virtual element methods, discontinuous FEs and hybrid high-order methods on polygonal meshes. However, to the best of our knowledge, a posteriori error analysis for polygonal FEs based on GBCs  is still an open question.

Wachspress GBC \cite{Wachspress1975} is the perhaps most famous conforming FEs on polygonal meshes. Without any stabilization, it makes use of explicit rational basis functions on meshes with convex polygonal elements. The main contribution of this work is to establish a residual-based a posteriori error estimate for the FE method based on Wachspress GBCs. 
Compared with polynomial FEs, the analysis contains two main difficulties on a mesh family satisfying suitable shape-regularity assumptions. 
The first one is to obtain uniform upper and lower bounds for the determinant and norm of the Jacobian matrix associated with polygon-to-polygon mappings. 
The second one is to prove inverse inequalities on polygonal elements with constants depending only on the mesh-regularity parameters.

Adaptive methods on polygonal meshes are less developed than their counterparts on simplicial or quadrilateral meshes. One of the difficulties is that polygonal refinement must simultaneously preserve mesh regularity, accommodate complex local topologies, and remain compatible with the approximation space. Existing approaches include subdivision strategies for polygonal cells \cite{Leon2014b}, centroidal Voronoi reconstruction \cite{NguyenXuan2017,Hoshina2018}, and adaptive virtual element methods \cite{Beirao2019,Wang2021,Du2022}. Related geometric results may also be found in \cite{Floater2016,Lai2016}. In this paper, we prove upper and lower bounds for a residual-type a posteriori estimator and combine this estimator with a local refinement strategy based on edge midpoints and element centroids, together with its extensions.

The rest of the paper is organized as follows. In Section~\ref{sect:prelim}, we introduce the model problem, the Wachspress-based polygonal finite element space, and the mesh regularity assumptions. In Section~\ref{sect:mapping}, we study the polygonal mapping induced by Wachspress coordinates and derive the geometric estimates needed in the analysis. Section~\ref{sect:residual} is devoted to proofs of reliability and efficiency of the residual-based a posteriori error estimator for Wachspress FE. Section~\ref{sect:numeric} presents numerical experiments to illustrate the effectiveness of the proposed estimator and adaptive algorithm.

\section{Preliminaries}\label{sect:prelim} Let $\Omega\subset\mathbb{R}^2$ be a bounded polygonal domain. We consider the Poisson problem
\begin{equation}\label{eq:poisson}
\left\{
\begin{aligned}
-\Delta u &= f \qquad &&\text{in }\Omega,\\
u &= 0 \qquad &&\text{on }\partial\Omega.
\end{aligned}
\right.
\end{equation}
Its weak formulation reads: find $u\in H_0^1(\Omega)$ such that
\begin{equation*}\label{eq:weak_poisson}
(\nabla u,\nabla v)=(f,v),\qquad \forall v\in H_0^1(\Omega).
\end{equation*}
We use $\|\cdot\|_{k,\Omega}$ and $|\cdot|_{k,\Omega}$ to denote the $H^k(\Omega)$ Sobolev norm and semi-norm, respectively.  
In particular, $|v|_{1,\Omega}=\|\nabla v\|_{0,\Omega}$ and $\|v\|_{0,\Omega}=\big(\int_\Omega v^2\big)^{1/2}$.

\subsection{Wachspress Barycentric Coordinates}\label{wachs_def}

Let $T\subset\mathbb{R}^2$ be a convex $n$-gon with vertices $\{\mathbf a_i\}_{i=1}^n$ ordered counterclockwise. Following \cite{Floater2015ref,Gillette2012}, we say $\{\lambda_{i,T}\}_{i=1}^n$ is a set of GBC on $T$ if 
\begin{equation*}
\lambda_{i,T}(\mathbf x)\geq 0, \quad\sum_{i=1}^n\lambda_{i,T}(\mathbf x)=1,\quad\sum_{i=1}^n\lambda_{i,T}(\mathbf x)\mathbf a_i=\mathbf x,\quad\forall \mathbf x\in T.    
\end{equation*}
We recall the definition of Wachspress GBC, which were introduced in \cite{Wachspress1975} and have been further investigated in, e.g., \cite{Wachspress1975,Loop1989,Warren1996,Meyer2002,Malsch2004,Warren2007}. Let $\mathbf n_i$ be the outward unit normal vector of the edge $e_i=[\mathbf a_i,\mathbf a_{i+1}]$ and set $M_i = \mathbf n_{i-1}\times \mathbf n_i$, where $\mathbf y\times \mathbf z:=y_1z_2-y_2z_1$ with $\mathbf y=(y_1,y_2)^\top$, $\mathbf z=(z_1,z_2)^\top$. For $\mathbf x\in T$, let $h_{i,T}(\mathbf x)$ be the distance from $\mathbf x$ to $e_i$, that is, 
\[
h_{i,T}(\mathbf x)=(\mathbf a_i-\mathbf x)\cdot\mathbf n_i=(\mathbf a_{i+1}-\mathbf x)\cdot\mathbf n_i.
\]

The Wachspress GBC is defined as
\begin{equation}\label{eq:wach_coordinate}
\lambda_{i,T}(\mathbf x):=\frac{w_{i,T}(\mathbf x)}{\sum_{j=1}^n w_{j,T}(\mathbf x)}=\frac{w_{i,T}(\mathbf x)}{W_T(\mathbf x)},\qquad i=1,\ldots,n.
\end{equation}
with the weight function given by 
\begin{equation}\label{eq:wach_weight_area}
w_{i,T}(\mathbf x):=M_i \prod_{j\neq i-1,i} h_{j,T}(\mathbf x).
\end{equation}

The Wachspress FE space of shape functions  on $T$ is \[
\Lambda_T:=\operatorname{span}\{\lambda_{1,T},\ldots,\lambda_{n,T}\}.
\]
Given a multi-index $\bm{\alpha}\in\mathbb{N}^2$, let    $|\boldsymbol\alpha|=\alpha_1+\alpha_2$. Let $\mathbb{P}_m={\rm span}\{\mathbf{x}^{\boldsymbol{\alpha}}:|\boldsymbol{\alpha}|\leq m\}$ denote the space of polynomials of degree $\leq m$. By definition, $\mathbb{P}_1\subset\Lambda_T$, $\lambda_{i,T}(\mathbf a_j)=\delta_{ij}$ for $1\le i,j\le n$, and the GBC-based interpolation $I_T$ preserves linear functions:
\[
p(\mathbf x)=I_Tp(\mathbf x):=\sum_{i=1}^n p(\mathbf a_i)\lambda_{i,T}(\mathbf x),\quad \forall\,p\in \mathbb P_1,\forall\,\mathbf x\in T. 
\]
For Wachspress GBC, restriction of $I_T$ to each edge $e\subset\partial T$ reduces to 1D Lagrange interpolation onto $\mathbb{P}_1(e)$, e.g., the linear polynomial space on $e$.

\subsection{Mesh Regularity}
We partition $\Omega$ into a set of $\mathcal{T}_h$ of convex polygonal elements. The Wachspress FE space is
\begin{equation}\label{eq:global_FEM_space}
V_h:=\{v_h\in C^0(\bar \Omega):\ v_h|_T\in \Lambda_T,\ \forall T\in\mathcal{T}_h,\, v_h|_{\partial \Omega} = 0\}.
\end{equation}
The Wachspress FE method is to find $u_h\in V_h$ such that
\begin{equation}\label{eq:WachspressFEM}
(\nabla u_h,\nabla v_h)=(f,v_h),\qquad \forall v_h\in V_h.
\end{equation}

For each $T\in\mathcal{T}_h$, let $\mathcal{V}_T=\{\mathbf a_i\}_{i=1}^{n(T)}$ be its set of vertices.  
Let $\theta_i$ be the interior angle of $T$ at $\mathbf a_i$, so that $0<\theta_i<\pi$. Let $h_T$ be the diameter of $T$, $\rho_T$ the diameter of the largest inscribed disk of $T$, and $h:=\max_{T\in\mathcal{T}_h} h_T$. Define 
\[
h_{*,T}:=\min_{1\le i\le n(T)}\Big(\min_{j\neq i-1,i} h_{j,T}(\mathbf a_i)\Big).
\]
Let $\mathcal{V}_h$ denote the set of mesh vertices. 
For each vertex $\mathbf a\in\mathcal{V}_h$, let $\Omega_{\mathbf a}:=\bigcup\{T\in\mathcal{T}_h:\mathbf a\in \mathcal{V}_T\}$
be the vertex patch surrounding $\mathbf a$.

We assume that the mesh family $\{\mathcal{T}_h\}$ satisfies the following shape-regularity conditions (cf.~\cite{Gillette2012,Chen2022,Tian2025}).
\begin{enumerate}
    \item[] \textbf{H1.} There exists a constant $C_1>0$ such that $h_T\le C_1\rho_T$ for all $T\in\mathcal{T}_h$.
    \item[] \textbf{H2.} There exists a constant $C_2>0$ such that $h_{*,T}\ge C_2 h_T$ for all $T\in\mathcal{T}_h$.
    \item[] \textbf{H3.} There exists a constant $\theta^*<\pi$ such that $\theta_i\le \theta^*$ for all $T\in\mathcal{T}_h$ and $1\le i\le n(T)$.
\end{enumerate}

Hypothesis \textbf{H1} implies a uniform lower bound on the interior angles:

\textbf{H4.} There exists a constant $\theta_*>0$ such that $0<\theta_*\le \theta_i$ for all $T\in\mathcal{T}_h$ and $1\le i\le n(T)$.

By convexity of each $T\in\mathcal{T}_h$ and \textbf{H4}, one further has:

\begin{enumerate}
    \item[] \textbf{H5.} There exists an integer $N>0$ such that $n(T)\leq N$ for all $T\in\mathcal{T}_h$.
    \item[] \textbf{H6.} There exists an integer $M>0$ such that $N(\Omega_{\mathbf a})\leq M$ for all $\mathbf a\in\mathcal{V}_h$, where $N(\Omega_{\mathbf a})$ denotes the number of elements in the patch $\Omega_{\mathbf a}$.
\end{enumerate}

Throughout the paper, all meshes are assumed to satisfy \textbf{H1}--\textbf{H6}. The symbols $c$, $C$ are generic positive constants depending only on \textbf{H1}--\textbf{H6}. By $C=C(\gamma)$ we mean the constant $C$ depends soly on $\gamma$. For two nonnegative quantities $A$ and $B$, we write $A\lesssim B$ if that $A\leq CB$, and write $A\eqsim B$ if both $A\lesssim B$ and $B\lesssim A$ hold.

For each $T\in\mathcal{T}_h$, it follows from \textbf{H3} and \textbf{H4} that there exists a constant $c_M>0$, depending only on $\theta^*$ and $\theta_*$, such that
\begin{equation}\label{eq:A_i_bound}
0<c_M\le M_i\le 1,\qquad i=1,\dots,n(T).
\end{equation}
By \textbf{H2}, for any $j$ and $k\notin\{j,j+1\}$,
\begin{equation}\label{eq:h_j_bound}
C_2h_T\leq h_{j,T}(\mathbf a_k)\leq h_T.
\end{equation}

For Wachspress GBC, the following derivative estimate developed in \cite{Tian2025} is key to our analysis.
\begin{lemma}\label{lem:wach_grad_bound}
For any vertex $\mathbf{a}_i\in\mathcal{V}_T$, and any $\boldsymbol\alpha\in\mathbb{N}^2$, the Wachspress barycentric coordinate $\lambda_{i,T}$ satisfies
\begin{equation*}
|D^{\boldsymbol\alpha}\lambda_{i,T}(\mathbf x)|
\le C h_T^{-|\boldsymbol\alpha|},
\qquad \forall\,\mathbf x\in T,
\end{equation*}
where the constant $C$ depends only on the mesh regularity and $|\boldsymbol\alpha|$.
\end{lemma}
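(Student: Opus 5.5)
The proof will be a scaling and compactness argument built on the rational structure $\lambda_{i,T}=w_{i,T}/W_T$ with polynomial numerator and denominator. First I would rescale. Translate and dilate $T$ by $\mathbf x=\mathbf x_0+h_T\hat{\mathbf x}$, which gives a polygon $\hat T$ of unit diameter. Wachspress coordinates are invariant under similarity maps, because the $M_i$ depend only on normals and each $h_{j,T}$ scales by $h_T$. Hence $\lambda_{i,T}(\mathbf x)=\hat\lambda_{i}(\hat{\mathbf x})$, and the chain rule gives $D^{\boldsymbol\alpha}\lambda_{i,T}=h_T^{-|\boldsymbol\alpha|}\hat D^{\boldsymbol\alpha}\hat\lambda_i$. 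It therefore suffices to bound $|\hat D^{\boldsymbol\alpha}\hat\lambda_i|$ on $\hat T$ uniformly over all unit-diameter convex polygons satisfying \textbf{H1}--\textbf{H5}.

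Next I would bound the numerators and the denominator. On $\hat T$ each $\hat h_j$ is an affine function with unit gradient and values in $[0,1]$. Each $w_i$ is a product of at most $N-2$ such factors times $M_i\le 1$, so every derivative of $w_i$ of every order is bounded by a constant depending only on $N$ and the order. The key step is a uniform lower bound $\hat W(\hat{\mathbf x})\ge c>0$ on $\hat T$. For this I would use the known identity that $W$, as defined here, is (up to the normalization of the $M_i$) a positive multiple of the adjoint polynomial, and it is strictly positive on the closed convex polygon. To make the bound quantitative, I would first evaluate at vertices: at $\hat{\mathbf a}_k$ all $w_i$ with $i\ne k$ vanish, since they contain $\hat h_{k-1}$ or $\hat h_k$. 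Therefore
\[
\hat W(\hat{\mathbf a}_k)=M_k\prod_{j\ne k-1,k}\hat h_j(\hat{\mathbf a}_k)\ge c_M C_2^{N-2}
\]
by \eqref{eq:A_i_bound} and \eqref{eq:h_j_bound}. To pass from vertices to all of $\hat T$, I would use that $\hat W$ restricted to an edge is, after dividing out the common factors, comparable to the linear interpolant of its vertex values. More robustly, I would use that each $w_i\ge0$ on $\hat T$ and show that for every $\hat{\mathbf x}\in\hat T$ some $w_i(\hat{\mathbf x})$ is bounded below. Let $e_{k-1},e_k$ be two edges on which $\hat{\mathbf x}$ is not simultaneously close; more precisely, pick the vertex $\hat{\mathbf a}_i$ such that all $\hat h_j(\hat{\mathbf x})$ with $j\ne i-1,i$ are $\gtrsim 1$. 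Such a vertex exists because, by \textbf{H2} and \textbf{H4}, a point can be within distance $\epsilon$ of at most two edges, and these must be adjacent, for a suitable $\epsilon$ depending only on the regularity constants.

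With $\hat W\ge c$ and all derivatives of $w_i$ and $\hat W$ bounded, the quotient rule (or Faà di Bruno applied to $1/\hat W$) gives $|\hat D^{\boldsymbol\alpha}\hat\lambda_i|\le C(|\boldsymbol\alpha|)$, and scaling back yields the lemma.

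The main obstacle is the geometric claim in the lower bound: every point of $\hat T$ is uniformly far from all edges except those adjacent to one vertex. My first attempt would be a contradiction argument. If $\hat{\mathbf x}$ were within $\epsilon$ of two non-adjacent edges $e_j,e_l$, then, using convexity and the \textbf{H1} inradius bound, one can find a vertex $\hat{\mathbf a}_k$ of $e_j$ whose distance to $e_l$ is at most $C\epsilon$. This contradicts \textbf{H2} once $\epsilon$ is small. The step "close to both edges implies a vertex close to the other edge" requires care with the angle bounds \textbf{H3}--\textbf{H4}. If it becomes messy, I would fall back on a compactness argument: the admissible normalized polygons form a compact set (by \textbf{H5} and the closedness of the conditions), and $\hat W$ is continuous and positive on each closed polygon in that set.
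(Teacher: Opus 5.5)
Your proof is correct. The paper does not prove this lemma; it imports it from \cite{Tian2025}, so there is no in-paper argument to match. Your route is a valid self-contained proof. It uses similarity invariance of $\lambda_{i,T}$, uniform bounds on all derivatives of the products $w_{i,T}$ of at most $N-2$ normalized affine factors, a uniform lower bound on $W_T$, and then Leibniz and Fa\`a di Bruno applied to $w_{i,T}W_T^{-1}$. The geometric fact you single out as the main obstacle is exactly Lemma \ref{lem:wach_auxiliary}, which the paper also quotes from \cite{Tian2025}.

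The obstacle is easier than you expect: no contradiction or compactness argument is needed, and the geometric part uses only \textbf{H2}. Write $\mathbf x=\sum_k\mu_k\mathbf a_k$ as a convex combination of the vertices and pick $k$ with $\mu_k\ge 1/n$. Since each $h_{j,T}$ is affine and nonnegative at every vertex, $h_{j,T}(\mathbf x)\ge \mu_k h_{j,T}(\mathbf a_k)\ge h_{*,T}/n\ge C_2h_T/n$ for all $j\ne k-1,k$. Hence $W_T(\mathbf x)\ge w_{k,T}(\mathbf x)\ge c_M(C_2h_T/n)^{n-2}$ by \eqref{eq:A_i_bound}. This is the same device the paper uses in the proof of Lemma \ref{lem:What}.

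It also treats $n=3$ uniformly. Your contradiction sketch only excludes pairs of non-adjacent edges, so it says nothing for triangles, where all edges are pairwise adjacent. There the claim that at most two edges are close would need a separate check.

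If you prefer to keep the contradiction route, the same convex-combination estimate gives the step you wanted. It shows that $h_{j,T}(\mathbf x)<\epsilon$ places $\mathbf x$ within $\epsilon h_T/h_{*,T}$ of a point of $e_j$. Hence an endpoint of $e_j$ lies within $\epsilon(1+h_T/h_{*,T})$ of the line of a non-adjacent edge $e_l$, which contradicts \textbf{H2} for small $\epsilon$.
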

In particular, $|\nabla\lambda_{i,T}|\lesssim h_T^{-1}$ and $|\nabla^2\lambda_{i,T}|\lesssim h_T^{-2}$.

\section{Mapping between Polygonal Elements}\label{sect:mapping}
We shall use homogeneity argument to derive a posteriori error bound. Therefore, it is necessary to analyze properties of a mapping from a reference polygon to the polygonal element $T\in\mathcal{T}_h$.

Let $T$ be a convex $n$-gon and let $\widehat T$ be a regular $n$-gon with $h_{\widehat T}=1$ and vertices $\{\hat{\mathbf a}_i\}_{i=1}^n$ ordered counterclockwise. Recall that $\lambda_{i,\hat{T}}$ is the Wachspress GBC at $\hat{\mathbf a}_i$, which satisfies 
\begin{equation}\label{eq:lambda_hat}
\|\lambda_{i,\hat{T}}\|_{L^\infty(\widehat T)}\leq1,\qquad\|\nabla\lambda_{i,\hat{T}}\|_{L^\infty(\widehat T)}\leq\widehat{C}_\lambda    
\end{equation}
for some constant $\widehat{C}_\lambda=\widehat{C}_\lambda(n)>0$.
Following \cite{Floater2010}, we define $F_T:\widehat T\to T$ by
\begin{equation}\label{def_polygon_map}
F_T(\hat{\mathbf x})=\sum_{i=1}^n \lambda_{i,\hat{T}}(\hat{\mathbf x})\,\mathbf a_i .
\end{equation}
Given a function $g$ on $T$, our analysis heavily relies on the pullback  $\hat{g}=g\circ F_T$ of $g$ defined
on the reference element $\widehat{T}$. We remark that the pullback  $\hat{\lambda}_i$ does not coincide with $\lambda_{i,\hat{T}}$. The pullback of $h_{i,T}\in\mathbb{P}_1$ is particularly important: 
\begin{equation}\label{eq:h_hat}
    \hat{h}_{i,T}(\hat{\mathbf x})
=
(I_Th_{i,T})(F(\hat{\mathbf x}))
=
\sum_{j=1}^n \lambda_{j,\hat{T}}(\hat{\mathbf x})\,h_{i,T}(\mathbf a_j).
\end{equation}

When no confusion arises, we may omit the subscript $T$, e.g., $\lambda_i=\lambda_{i,T}$, $h_i=h_{i,T}$, $F=F_T$. 

The mapping $F_T$ is rational and smooth in $\widehat T$. Let $J_F(\hat{\mathbf x})$ denote its Jacobian matrix and $\det(J_F(\hat{\mathbf x}))$ be its Jacobian determinant. To estimate $J_F$ and $\det(J_F)$, we recall the following results from \cite{Floater2010}.

\begin{lemma}\label{lem:D-scaling}
Let $a,b,c$ be differentiable functions on $\widehat T$, and let $\mu$ be a positive differentiable function on $\widehat T$. Define
\[
D(a,b,c):=
\det
\begin{pmatrix}
a & b & c\\
\partial_1 a & \partial_1 b & \partial_1 c\\
\partial_2 a & \partial_2 b & \partial_2 c
\end{pmatrix},
\]
where $\partial_i=\frac{\partial}{\partial x_i}$. Then, for any $\hat{\mathbf x}\in\widehat T$, it holds that
\[
D(\mu a,\mu b,\mu c)(\hat{\mathbf x})
=
\mu(\hat{\mathbf x})^3 D(a,b,c)(\hat{\mathbf x}).
\]
\end{lemma}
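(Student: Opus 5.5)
The plan is to expand $D(\mu a,\mu b,\mu c)$ directly using the product rule and then exploit multilinearity and the alternating property of the $3\times 3$ determinant. Each column of the matrix defining $D(\mu a,\mu b,\mu c)$ has the form
\[
\begin{pmatrix}\mu g\\ \partial_1(\mu g)\\ \partial_2(\mu g)\end{pmatrix}
=\begin{pmatrix}\mu g\\ \mu\,\partial_1 g+g\,\partial_1\mu\\ \mu\,\partial_2 g+g\,\partial_2\mu\end{pmatrix},\qquad g\in\{a,b,c\}.
\]
All quantities are evaluated at the fixed point $\hat{\mathbf x}$, so $\mu$, $\partial_1\mu$ and $\partial_2\mu$ are simply scalars there.

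I will perform row operations instead of expanding column by column. The first row is $\mu\,(a,b,c)$. Adding $-(\partial_1\mu/\mu)$ times the first row to the second row removes the terms $g\,\partial_1\mu$ and leaves $\mu\,(\partial_1 a,\partial_1 b,\partial_1 c)$. The analogous operation with $\partial_2\mu/\mu$ on the third row leaves $\mu\,(\partial_2 a,\partial_2 b,\partial_2 c)$. These operations are legitimate because $\mu(\hat{\mathbf x})>0$ by assumption, so division by $\mu$ is allowed. Adding a multiple of one row to another does not change the determinant.

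The resulting matrix is the matrix defining $D(a,b,c)$ with every row multiplied by $\mu(\hat{\mathbf x})$. Pulling the factor $\mu$ out of each of the three rows gives $\mu^3 D(a,b,c)$, which is the claim.

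There is no real obstacle here. The only point to check is that $\mu>0$, which justifies dividing by $\mu$. Alternatively, one can avoid division altogether: expand each column as $\mu\,(g,\partial_1 g,\partial_2 g)^\top + g\,(0,\partial_1\mu,\partial_2\mu)^\top$ and use multilinearity in the columns. Any term that selects the second vector in two or more columns contains two columns proportional to the same vector $(0,\partial_1\mu,\partial_2\mu)^\top$ and therefore vanishes. The terms with exactly one such column reduce to $\mu^2$ times a determinant whose rows are $\mu(a,b,c)$, $\partial_1\mu\,(a,b,c)$ and $\partial_2\mu\,(a,b,c)$, up to the derivative rows of the other columns. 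Tracking these terms is routine, but the row-operation argument is cleaner, so I will present that version.
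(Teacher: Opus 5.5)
Your row-reduction argument is correct and complete. After subtracting $(\partial_r\mu/\mu)(\hat{\mathbf x})$ times the first row from row $r+1$ ($r=1,2$), each row is $\mu(\hat{\mathbf x})$ times the corresponding row of the matrix defining $D(a,b,c)$, and $\mu^3$ factors out.

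The paper gives no proof of its own here. It quotes the identity from Floater--Kosinka \cite{Floater2010} and uses it only with $\mu=1/Z_{\widehat T}$, to rewrite $D(\lambda_{i,\hat T},\lambda_{j,\hat T},\lambda_{k,\hat T})$ as $D(z_i,z_j,z_k)/Z_{\widehat T}^3$. Your computation therefore supplies the standard verification behind that citation.

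Two minor remarks.

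First, positivity of $\mu$ is not needed. By linearity in the second row, write it as $\mu\,(\partial_1a,\partial_1b,\partial_1c)+\partial_1\mu\,(a,b,c)$. The second piece is parallel to the first row $\mu\,(a,b,c)$, so it contributes zero even where $\mu$ vanishes. The same holds for the third row, so the identity holds for every differentiable $\mu$ without any division.

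Second, your sketch of the column-expansion alternative is not accurate as written. Let $A=(a,\partial_1a,\partial_2a)^\top$, define $B$ and $C$ analogously, and let $v=(0,\partial_1\mu,\partial_2\mu)^\top$. The three terms containing exactly one column of the form $g\,v$ equal $\mu^2\bigl(a\det(v,B,C)+b\det(A,v,C)+c\det(A,B,v)\bigr)$. They do not vanish individually. Their sum is zero only because of the Cramer-type identity $a\det(v,B,C)+b\det(A,v,C)+c\det(A,B,v)=v_1\det(A,B,C)$ together with $v_1=0$. Since you present the row version, this does not affect your proof.
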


\begin{lemma}\label{lem:jacobian-expansion}
For any $\hat{\mathbf x}\in\widehat T$, it holds that 
\[
\det(J_F(\hat{\mathbf x}))
=
2\sum_{1\le i<j<k\le n}
D(\lambda_{i,\hat{T}},\lambda_{j,\hat{T}},\lambda_{k,\hat{T}})(\hat{\mathbf x})\,
A(\mathbf a_i,\mathbf a_j,\mathbf a_k),
\]
where $A(\mathbf u,\mathbf v,\mathbf w)
=
\frac12
\det
\begin{pmatrix}
1 & 1 & 1\\
u_1 & v_1 & w_1\\
u_2 & v_2 & w_2
\end{pmatrix}$.
\end{lemma}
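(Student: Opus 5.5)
The identity is linear in the coordinates of the vertices, so the plan is to expand $\det(J_F)$ by multilinearity and then use the partition of unity $\sum_i\lambda_{i,\hat T}\equiv 1$ to convert the result into the $3\times 3$ determinants $D$. Write $\lambda_i=\lambda_{i,\hat T}$ and $\mathbf a_i=(a_{i,1},a_{i,2})^\top$. By \eqref{def_polygon_map},
\[
F_1=\sum_i\lambda_i a_{i,1},\qquad F_2=\sum_i\lambda_i a_{i,2}.
\]
Hence
\[
\det J_F=\partial_1F_1\,\partial_2F_2-\partial_2F_1\,\partial_1F_2 .
\]
Rather than expanding this $2\times2$ determinant directly, I will realize it as the $3\times3$ determinant
\[
\det J_F=\det\begin{pmatrix}1 & F_1 & F_2\\ 0&\partial_1F_1&\partial_1F_2\\ 0&\partial_2F_1&\partial_2F_2\end{pmatrix},
\]
that is, $\det J_F=D(1,F_1,F_2)$.

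Next I use $1=\sum_i\lambda_i$, which gives $0=\sum_i\partial_k\lambda_i$, so the first column is $\sum_i(\lambda_i,\partial_1\lambda_i,\partial_2\lambda_i)^\top$. Since $D$ is trilinear in its three function arguments (differentiation is linear),
\[
D(1,F_1,F_2)=\sum_{i,j,k=1}^n a_{j,1}a_{k,2}\,D(\lambda_i,\lambda_j,\lambda_k).
\]
The determinant $D$ is alternating in its arguments, so only triples with $i,j,k$ distinct contribute. Grouping each unordered set $\{i<j<k\}$, the six permutations give
\[
D(\lambda_i,\lambda_j,\lambda_k)\sum_{\sigma}\operatorname{sgn}(\sigma)\,a_{\sigma(j),1}a_{\sigma(k),2}.
\]
The sum is the cofactor expansion of
\[
\det\begin{pmatrix}1&1&1\\ a_{i,1}&a_{j,1}&a_{k,1}\\ a_{i,2}&a_{j,2}&a_{k,2}\end{pmatrix}=2A(\mathbf a_i,\mathbf a_j,\mathbf a_k)
\]
along its first row. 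More concisely: $(1,F_1,F_2)^\top=\sum_i\lambda_i(1,a_{i,1},a_{i,2})^\top$, and by multilinearity $D\big(\sum_i\lambda_i\mathbf b_i\big)$ equals $\sum_{i<j<k}\det[\mathbf b_i,\mathbf b_j,\mathbf b_k]\,D(\lambda_i,\lambda_j,\lambda_k)$, since both are alternating trilinear forms. This yields the stated formula.

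The main point needing care is the sign and ordering conventions. First, I will check that $D(1,F_1,F_2)$ equals $\det J_F$ with $J_F$ having rows $\nabla F_1^\top$ and $\nabla F_2^\top$; the transpose convention does not matter. Second, I will check that the factor $2$ matches the definition of $A$ with the $\tfrac12$ normalization. I will verify the combinatorial identity via the Cauchy–Binet formula applied to the product of the $3\times n$ matrix $[\mathbf b_1,\dots,\mathbf b_n]$ with the $n\times3$ matrix whose rows are $(\lambda_i,\partial_1\lambda_i,\partial_2\lambda_i)$. This product is exactly the matrix defining $D(1,F_1,F_2)$, up to transposition, so Cauchy–Binet gives the sum over $i<j<k$ directly and cleanly.
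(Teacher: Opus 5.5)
Your proof is correct: writing $\det J_F=D(1,F_1,F_2)$, substituting $1=\sum_i\lambda_{i,\hat T}$, and expanding by trilinearity and the alternating property gives exactly the stated formula. The same holds for the equivalent Cauchy--Binet route, applied to the product of the $3\times n$ matrix with columns $(1,a_{i,1},a_{i,2})^\top$ and the $n\times 3$ matrix with rows $(\lambda_{i,\hat T},\partial_1\lambda_{i,\hat T},\partial_2\lambda_{i,\hat T})$. The paper gives no proof of its own and simply quotes the identity from \cite{Floater2010}; your argument is the standard derivation, and your sign and factor-$2$ checks go through.
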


\begin{lemma}\label{lem_polymap_inject}
For any $\hat{\mathbf x}\in\widehat T$, $\det(J_F(\hat{\mathbf x}))>0$ and $F_T:\widehat T\to T$ is injective.
\end{lemma}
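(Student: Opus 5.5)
Since $\widehat T$ is the regular $n$-gon, the weights $w_{i,\hat T}$ have degree $n-2$ and $W_{\hat T}$ has degree $n-3$; on the reference polygon $W_{\hat T}>0$. By Lemma~\ref{lem:D-scaling} with $\mu=1/W_{\hat T}$,
\[
D(\lambda_{i,\hat T},\lambda_{j,\hat T},\lambda_{k,\hat T})=W_{\hat T}^{-3}D(w_{i,\hat T},w_{j,\hat T},w_{k,\hat T}).
\]
It therefore suffices to show that $\sum_{i<j<k}D(w_i,w_j,w_k)A(\mathbf a_i,\mathbf a_j,\mathbf a_k)>0$. The positivity of the determinant is the main claim; injectivity will then follow from a degree/boundary argument.

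\textbf{Positivity of the determinant.} The convex polygon $T$ has vertices in counterclockwise order, so $A(\mathbf a_i,\mathbf a_j,\mathbf a_k)>0$ for every $i<j<k$. The key step, and the one I expect to be the main obstacle, is to prove $D(\lambda_{i,\hat T},\lambda_{j,\hat T},\lambda_{k,\hat T})\ge 0$ on $\widehat T$ for all $i<j<k$, with strict positivity for at least one triple at each point.

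A cleaner route avoids this sign analysis on individual triples. Note that
\[
\det J_F = 2\sum_{i<j<k} D(\cdot)\,A(\cdot)
\]
is linear in each vertex, so I view it as a function of the configuration $(\mathbf a_1,\dots,\mathbf a_n)$. For $\mathbf a_i=\hat{\mathbf a}_i$ the map is the identity, so $\det J_F\equiv 1$. I would then invoke Floater–Kosinka's argument: write $\lambda_{i,\hat T}=w_i/W$ and use that $\nabla\lambda_i$ together with $\lambda_i$ forms the homogeneous lift $(\lambda_i,\lambda_i\hat x_1,\lambda_i\hat x_2)$. The determinant expansion then becomes a sum of products of areas of triangles formed by the points $\mathbf a$ weighted by $D$-terms of the Wachspress weights. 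Each such $D$-term can be computed explicitly, since the $w_i$ are products of linear distance functions. For triples the computation reduces to $D$ of products of linear factors, which by multilinearity factorizes into positive multiples of $h_{\hat T}$-products times areas of triangles formed by the normals (the $M_i>0$). The sign of each term is then positive by convexity of $\widehat T$. Concretely, I would first check the case $n=3$, then triples of consecutive indices, and finally handle general triples via the identity $\sum_i\lambda_i\mathbf a_i=\mathbf x$ differentiated.

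\textbf{Injectivity.} First, $F$ maps each edge $\hat e_i$ onto $e_i$ bijectively and linearly, since the Wachspress coordinates restrict to linear interpolation on edges. Hence $F|_{\partial\widehat T}$ is a homeomorphism onto $\partial T$. Since $\det J_F>0$, $F$ is a local diffeomorphism in the interior, and the interior of $\widehat T$ is mapped into $T$ because the $\lambda_{i,\hat T}$ are nonnegative and sum to one, so $F(\hat{\mathbf x})$ is a convex combination of the vertices. A proper local homeomorphism that restricts to a homeomorphism on the boundary has a constant covering degree, and this degree equals the boundary winding number, which is $1$. Therefore $F$ is injective, and in fact a bijection onto $T$.
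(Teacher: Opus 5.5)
\textbf{Gap: positivity of $\det J_F$ is never proved.} Your reduction is correct. Since $A(\mathbf a_i,\mathbf a_j,\mathbf a_k)>0$ for $i<j<k$, Lemma~\ref{lem:jacobian-expansion} makes everything hinge on $D(\lambda_{i,\hat T},\lambda_{j,\hat T},\lambda_{k,\hat T})\ge 0$, strictly for some triple. But you never prove this step, and it is the whole content of the positivity claim. (The paper does not prove the lemma either; it cites Floater--Kosinka~\cite{Floater2010}.)

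Your ``cleaner route'' gives nothing. Knowing $\det J_F\equiv 1$ at $\mathbf a_i=\hat{\mathbf a}_i$, and that $\det J_F$ is multi-affine in the vertices, says nothing about its sign at any other convex configuration. A deformation argument would need non-vanishing along the path, which is the claim itself. You then fall back on the triple-wise analysis anyway, and that sketch fails in three places:
\begin{enumerate}
\item $D$ is multilinear in its three arguments, not in the factors of a product, so nothing ``factorizes'' over $w_i=\prod_{\ell\ne i-1,i}h_\ell$.
\item The natural expansion into $h$-products times cross products of normals is not termwise positive. For three consecutive edges of the regular hexagon, $\hat{\mathbf n}_3\times\hat{\mathbf n}_1<0$.
\item Differentiating $\sum_i\lambda_{i,\hat T}\hat{\mathbf a}_i=\hat{\mathbf x}$ gives only one identity for the full sum (namely $\det J_F=1$ for the identity map), and no sign for individual triples.
\end{enumerate}

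\textbf{The missing mechanism.} This is the decomposition the paper quotes in the proof of Lemma~\ref{lem:Jacobian_det}. In the interior, write $\lambda_{i,\hat T}=z_i/Z_{\widehat T}$ with $z_i=1/(h_{i-1}h_i)$, and apply Lemma~\ref{lem:D-scaling} with $\mu=1/Z_{\widehat T}$. Factoring $a,b,c$ out of the columns gives $D(a,b,c)=2abc\,A(\nabla\log a,\nabla\log b,\nabla\log c)$. Hence $D(z_i,z_j,z_k)=2z_iz_jz_k\,A(\mathbf u_i,\mathbf u_j,\mathbf u_k)$, where $\mathbf u_i=\mathbf v_{i-1}+\mathbf v_i$ and $\mathbf v_\ell=\hat{\mathbf n}_\ell/h_\ell(\hat{\mathbf x})$. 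Expanding $2A(\mathbf p,\mathbf q,\mathbf r)=\mathbf p\times\mathbf q+\mathbf q\times\mathbf r+\mathbf r\times\mathbf p$ yields
\[
A(\mathbf u_i,\mathbf u_j,\mathbf u_k)=A(\mathbf v_i,\mathbf v_j,\mathbf v_k)+A(\mathbf v_i,\mathbf v_{j-1},\mathbf v_{k-1})+A(\mathbf v_{i-1},\mathbf v_j,\mathbf v_{k-1})+A(\mathbf v_{i-1},\mathbf v_{j-1},\mathbf v_k).
\]
The $\mathbf v_\ell$ are, in counterclockwise order, the vertices of the polar dual of $\widehat T$ about $\hat{\mathbf x}$, which is a convex polygon. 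All four index triples are cyclically non-decreasing, so every term is $\ge 0$, and the first is $>0$. Convexity enters here, through the dual polygon, not through any factorization. Points of $\partial\widehat T$, where some $h_\ell$ vanish, still need a separate check.

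\textbf{Injectivity.} Once positivity is available, your degree/covering argument is sound. Make explicit that $F(\operatorname{int}\widehat T)\subset\operatorname{int}T$; this follows from the open mapping property together with $F(\widehat T)\subset T$, and it makes $F$ proper on the interior. The same argument also gives the surjectivity that the paper proves separately via the no-retraction theorem.
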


Next, we show that \(F\) is surjective and thus bijective.

\begin{lemma}
If $\widehat T$ and $T$ are convex $n$-gons, then $F:\widehat T\to T$ is surjective.
\end{lemma}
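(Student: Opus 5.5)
We prove surjectivity by a topological argument: $F$ maps $\partial\widehat T$ homeomorphically onto $\partial T$, it is a local diffeomorphism in the interior, and it is injective. Invariance of domain (or a degree argument) then forces $F(\widehat T)=T$.

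\textbf{Step 1: the boundary.} For Wachspress coordinates on $\widehat T$, the restriction to the edge $[\hat{\mathbf a}_i,\hat{\mathbf a}_{i+1}]$ reduces to 1D linear Lagrange interpolation. Hence all $\lambda_{j,\hat T}$ with $j\neq i,i+1$ vanish there, and $\lambda_{i,\hat T},\lambda_{i+1,\hat T}$ are the two affine hat functions. Consequently $F$ restricted to that edge is the affine bijection onto $e_i=[\mathbf a_i,\mathbf a_{i+1}]$. Gluing the edges shows that $F|_{\partial\widehat T}:\partial\widehat T\to\partial T$ is a homeomorphism. Moreover $F(\widehat T)\subset T$, because $F(\hat{\mathbf x})$ is a convex combination of the $\mathbf a_i$, since $\lambda_{i,\hat T}\ge0$ and they sum to one.

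\textbf{Step 2: openness of the interior image.} By Lemma~\ref{lem_polymap_inject}, $\det J_F>0$ on $\widehat T$. The inverse function theorem therefore makes $F$ an open map on $\operatorname{int}\widehat T$, so $U:=F(\operatorname{int}\widehat T)$ is open in $\mathbb R^2$. By injectivity (Lemma~\ref{lem_polymap_inject}) together with Step 1, $U$ does not meet $F(\partial\widehat T)=\partial T$. Since $U\subset T$, this gives $U\subset\operatorname{int}T$.

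\textbf{Step 3: closedness.} $F(\widehat T)$ is compact because $\widehat T$ is compact and $F$ is continuous; the map $F$ is rational with a denominator $W_{\widehat T}>0$ on $\widehat T$. It follows that
\[
U=F(\widehat T)\cap\operatorname{int}T=F(\widehat T)\setminus\partial T,
\]
where the second equality uses injectivity and Step 1. This set is relatively closed in $\operatorname{int}T$. Thus $U$ is nonempty, open and closed in the connected set $\operatorname{int}T$, which is connected because $T$ is convex. Hence $U=\operatorname{int}T$, and so $F(\widehat T)=\operatorname{int}T\cup\partial T=T$.

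The main point needing care is Step 1, namely that the boundary restriction is exactly the edge-wise affine map. This relies on the stated property that $\lambda_{j,\hat T}$ vanishes on edges not incident to $\hat{\mathbf a}_j$, which follows from the factor $h_{j}$ in the weights \eqref{eq:wach_weight_area}. Step 2 then requires that interior points do not map to $\partial T$. Injectivity settles this directly, so no separate degree computation is needed.
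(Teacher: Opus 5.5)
Your proof is correct, but it follows a different route from the paper.

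\textbf{The paper's route.} It uses only three facts: $F$ is continuous, $F(\widehat T)\subseteq T$, and $F|_{\partial\widehat T}$ is a homeomorphism onto $\partial T$. It assumes some $\mathbf y_0\in\operatorname{int}T$ is missed by $F$. It then composes $F$ with the radial projection from $\mathbf y_0$ onto $\partial T$ and with $(F|_{\partial\widehat T})^{-1}$. This gives a retraction of the disk $\widehat T$ onto its boundary, contradicting the no-retraction (Brouwer) theorem.

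\textbf{Your route.} You run an open--closed connectedness argument on $\operatorname{int}T$. Openness comes from the inverse function theorem, using $\det J_F>0$ from Lemma~\ref{lem_polymap_inject}.

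\textbf{The trade-off.} The paper needs a genuinely topological theorem but no differential information. Its argument therefore applies to any continuous map $\widehat T\to T$ that restricts to a homeomorphism $\partial\widehat T\to\partial T$, such as maps built from other generalized barycentric coordinates, whether or not they are injective. It also does not rely on the Floater--Kosinka Jacobian result. Your argument uses only elementary topology, but it depends on that nontrivial analytic input. As a by-product, it shows that $F$ is an open map, and hence (with injectivity) a diffeomorphism, from $\operatorname{int}\widehat T$ onto $\operatorname{int}T$.

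\textbf{A simplification.} Injectivity is not needed in your Steps 2 and 3. An open subset $U\subset\mathbb R^2$ contained in $T$ automatically lies in $\operatorname{int}T$. Then $F(\widehat T)=U\cup\partial T$ gives $F(\widehat T)\cap\operatorname{int}T=U$ directly. So $\det J_F\neq 0$ on $\operatorname{int}\widehat T$ is all your argument uses from Lemma~\ref{lem_polymap_inject}.
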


\begin{proof}
Since each Wachspress GBC $\lambda_{i,\hat{T}}$ is continuous on $\widehat{T}$, the mapping $F$ is continuous. Moreover, for any $\hat{\mathbf x}\in\widehat T$, $F(\hat{\mathbf x})$ is a convex combination of $\mathbf a_1,\dots,\mathbf a_n$ and thus $F(\widehat{T})\subseteq T$.

For each edge $\hat{e}_i=[\hat{\mathbf a}_i,\hat{\mathbf a}_{i+1}]\subset\partial\widehat T$, all Wachspress GBCs vanish on $\hat{e}_i$ except $\lambda_{i,\hat{T}}$ and $\lambda_{i+1,\hat{T}}$. Thus $F(\hat{\mathbf x})
=
\lambda_{i,\hat{T}}(\hat{\mathbf x})\mathbf a_i+\hat\lambda_{i+1,\hat{T}}(\hat{\mathbf x})\mathbf a_{i+1}$ for all $\hat{\mathbf x}\in\hat{e}_i$. Therefore, $F([\hat{\mathbf a}_i,\hat{\mathbf a}_{i+1}])=[\mathbf a_i,\mathbf a_{i+1}]$, and  $F(\partial\widehat T)=\partial T$.

Assume that $F$ is not surjective. Then there exists $\mathbf y_0\in\operatorname{int}(T)\setminus F(\widehat T)$. Given $\mathbf{z}\in T\setminus\{\mathbf y_0\}$, let $\rho(\mathbf z)\in\partial T$ be the intersection of the ray $\{\mathbf y_0+t(\mathbf z-\mathbf y_0): t\geq0\}$ with $\partial T$. Thanks to the convexity of $T$, the map $\rho:\overline T\setminus\{\mathbf y_0\}\to\partial T$ is well-defined and continuous, with $\rho|_{\partial T}=\operatorname{id}_{\partial T}$.

Since $\tau=F|_{\partial\widehat T}:\partial\widehat T\to\partial T$ is a homeomorphism, we can define
\[
s:=\tau^{-1}\circ \rho\circ F:\widehat T\to\partial\widehat T,
\]
which is continuous with $s|_{\partial\widehat T}=\operatorname{id}_{\partial\widehat T}$. This contradicts the well-known fact that a (topological) disk cannot retract continuously onto its boundary. Therefore $F$ is surjective.
\end{proof}

To prove inverse inequalities on polygons, we need estimates for $J_F(\hat{\mathbf x})$. We use the following auxiliary results from \cite{Tian2025}.

\begin{lemma}\label{lem:wach_auxiliary}
For any $T\in\mathcal{T}_h$ and $\mathbf x\in T$, there are at most two indices $i, j\in\{1,2,\ldots,n\}$ satisfying $h_i(\mathbf x)<h_{*,T}/3$, $h_j(\mathbf x)<h_{*,T}/3$. If so, then the edges $e_i, e_j$ share a common vertex.
\end{lemma}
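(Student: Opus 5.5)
The plan is to argue by contradiction and show that three small distances force a contradiction with \textbf{H2}. The key tool is that each distance function $h_k$ is affine: $h_k(\mathbf x)=(\mathbf a_k-\mathbf x)\cdot\mathbf n_k$. Hence for any two points $\mathbf x,\mathbf y$ we have $|h_k(\mathbf x)-h_k(\mathbf y)|\le|\mathbf x-\mathbf y|$.

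First I will handle the second assertion. Suppose $h_i(\mathbf x)<h_{*,T}/3$ and $h_j(\mathbf x)<h_{*,T}/3$, with $e_i,e_j$ not adjacent, i.e.\ $j\notin\{i-1,i,i+1\}$. Let $\mathbf p\in e_i$ be the nearest point to $\mathbf x$ on $e_i$. Because $T$ is convex and $\mathbf x\in T$, I expect $|\mathbf x-\mathbf p|=h_i(\mathbf x)$. This holds since the distance from an interior point to the supporting line is realized within the edge, or else the nearest point is an endpoint, with distance at most comparable; I will check this by taking the foot on the line and noting that $h_i$ is the line distance.

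The safer route avoids feet altogether. Since $\mathbf x\in T=\mathrm{conv}\{\mathbf a_k\}$, write $\mathbf x=\sum_k\lambda_k(\mathbf x)\mathbf a_k$. Then
\[
h_j(\mathbf x)=\sum_k\lambda_k(\mathbf x)\,h_j(\mathbf a_k),
\]
and each term with $k\notin\{j,j+1\}$ has $h_j(\mathbf a_k)\ge h_{*,T}$. So $h_j(\mathbf x)<h_{*,T}/3$ forces $\lambda_j(\mathbf x)+\lambda_{j+1}(\mathbf x)>2/3$. Similarly $h_i(\mathbf x)<h_{*,T}/3$ forces $\lambda_i(\mathbf x)+\lambda_{i+1}(\mathbf x)>2/3$. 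Here the $\lambda_k$ are the Wachspress GBCs, which are nonnegative and sum to one.

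If $e_i$ and $e_j$ share no vertex, the index sets $\{i,i+1\}$ and $\{j,j+1\}$ are disjoint. The two sums then total more than $4/3>1$, which is a contradiction. This proves the adjacency claim.

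For the count, suppose three indices $i,j,k$ all satisfy the bound. By the above, each pair of the corresponding edges is adjacent. Three pairwise-adjacent edges of an $n$-gon exist only when $n=3$. In that case the three sets $\{i,i+1\}$, $\{j,j+1\}$ and $\{k,k+1\}$ each have weight greater than $2/3$, and together they cover every vertex exactly twice. Summing gives $2=2\sum_m\lambda_m>3\cdot 2/3=2$, which is a contradiction. For $n\ge4$, three mutually adjacent edges are impossible, so at most two indices qualify.

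The only delicate point is the convex-combination bound $h_j(\mathbf a_k)\ge h_{*,T}$ for $k\ne j,j+1$. This is exactly the definition of $h_{*,T}$, equivalently the bound \eqref{eq:h_j_bound} before scaling. No step should be a real obstacle. The argument needs only nonnegativity of GBCs and affinity of $h_j$, so any generalized barycentric coordinates would do.
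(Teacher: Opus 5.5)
Your proof is correct and complete. The paper does not prove this lemma: it quotes it from Tian and Wang \cite{Tian2025}, so there is no in-paper argument to compare against, and yours is a self-contained replacement.

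Your key step writes $\mathbf x=\sum_k\lambda_k(\mathbf x)\mathbf a_k$. You then use that $h_j$ is affine, that $h_j(\mathbf a_j)=h_j(\mathbf a_{j+1})=0$, and that $h_j(\mathbf a_k)\ge h_{*,T}$ for every other $k$. This gives $h_j(\mathbf x)\ge h_{*,T}\,(1-\lambda_j(\mathbf x)-\lambda_{j+1}(\mathbf x))$, so every qualifying edge carries weight more than $2/3$ on its two endpoints. Both claims then reduce to weight counting.

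This buys brevity and generality. You need only some nonnegative weights with partition of unity and linear precision, which exist for any point of $T=\mathrm{conv}\{\mathbf a_k\}$. Nothing specific to Wachspress coordinates or to the angle conditions is used.

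You can also drop the case split $n=3$ versus $n\ge4$ in the counting step. A vertex $\mathbf a_m$ lies only on $e_{m-1}$ and $e_m$, so each index $m$ belongs to at most two of the three sets $\{i,i+1\}$, $\{j,j+1\}$, $\{k,k+1\}$. Adding the three inequalities then gives $2<\sum_m(\text{multiplicity of }m)\,\lambda_m(\mathbf x)\le 2$, a contradiction.

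Finally, delete your first paragraph's nearest-point idea and the Lipschitz remark. Neither is used, and the claim $|\mathbf x-\mathbf p|=h_i(\mathbf x)$ is false in general. If $T$ has an obtuse angle at an endpoint of $e_i$, the foot of the perpendicular from a point of $T$ to the line of $e_i$ can fall outside $e_i$. For example, take the quadrilateral with vertices $(0,0),(1,0),(2,1),(0,1)$ and $\mathbf x=(2,1)$: then $h_1(\mathbf x)=1$, but the distance from $\mathbf x$ to the segment $e_1$ is $\sqrt2$.
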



We present an estimate on $\det(J_F(\hat{\mathbf x}))$ in the next lemma.
\begin{lemma}\label{lem:Jacobian_det}
For any $\hat{\mathbf x}\in\widehat T$, it holds that
\[
\det(J_F(\hat{\mathbf x}))\eqsim h_T^2.
\]
\end{lemma}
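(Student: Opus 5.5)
The plan is to use the expansion of Lemma~\ref{lem:jacobian-expansion},
\[
\det(J_F(\hat{\mathbf x}))=2\sum_{i<j<k}D(\lambda_{i,\hat T},\lambda_{j,\hat T},\lambda_{k,\hat T})(\hat{\mathbf x})\,A(\mathbf a_i,\mathbf a_j,\mathbf a_k),
\]
and to bound the two kinds of factors separately. The key structural fact from \cite{Floater2010}, which underlies the proof of Lemma~\ref{lem_polymap_inject}, is that for Wachspress coordinates on the convex polygon $\widehat T$ every $D(\lambda_{i,\hat T},\lambda_{j,\hat T},\lambda_{k,\hat T})$ with $i<j<k$ is nonnegative. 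Since $T$ is convex and its vertices are counterclockwise, every $A(\mathbf a_i,\mathbf a_j,\mathbf a_k)$ with $i<j<k$ is positive. The sum therefore has no cancellation, and upper and lower bounds can be proved term by term.

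\emph{Upper bound.} Each $A(\mathbf a_i,\mathbf a_j,\mathbf a_k)\le |T|\lesssim h_T^2$. Each $D$ is a $3\times3$ determinant built from $\lambda_{\cdot,\hat T}$ and their first derivatives. These are bounded on $\widehat T$ by \eqref{eq:lambda_hat}, with constants depending only on $n\le N$ (\textbf{H5}). The number of terms is at most $\binom N3$. Hence $\det J_F\lesssim h_T^2$. Alternatively, $J_F=\sum_i \mathbf a_i\nabla\lambda_{i,\hat T}^\top=\sum_i(\mathbf a_i-\mathbf a_1)\nabla\lambda_{i,\hat T}^\top$ gives $|J_F|\lesssim h_T$ directly.

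\emph{Lower bound.} This is the main obstacle. It suffices to exhibit, for each $\hat{\mathbf x}$, one triple whose $D$-factor is bounded below by a constant $c(n)>0$ and whose triangle area is $\gtrsim h_T^2$.
\begin{itemize}
\item \emph{Area.} For three consecutive vertices $\mathbf a_{k-1},\mathbf a_k,\mathbf a_{k+1}$, the area equals $\tfrac12|e_{k-1}||e_k|\sin\theta_k$. By \textbf{H2}, $|e_j|\ge h_{j+1}(\mathbf a_j)\ge C_2h_T$. By \textbf{H3}--\textbf{H4}, $\sin\theta_k\ge c$. So every consecutive triangle has area $\gtrsim h_T^2$. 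More generally, every triangle of vertices of $T$ has area $\ge \tfrac12|\mathbf a_i\mathbf a_j|\,\min h_{\ell}(\mathbf a_k)\gtrsim h_T^2$ by \eqref{eq:h_j_bound}, taking $e_\ell$ an edge adjacent to $\mathbf a_i$ not containing $\mathbf a_k$.
\item \emph{$D$-factors.} All $A$'s are therefore $\eqsim h_T^2$, which gives
\[
\det J_F(\hat{\mathbf x})\gtrsim h_T^2\sum_{i<j<k}D(\lambda_{i,\hat T},\lambda_{j,\hat T},\lambda_{k,\hat T})(\hat{\mathbf x}).
\]
The remaining sum is exactly $\tfrac12\det J_{\rm id}$, computed by Lemma~\ref{lem:jacobian-expansion} with $T$ replaced by $\widehat T$ itself (so $F=\mathrm{id}$), divided by comparable areas.
\item \emph{Positivity of the sum.} More precisely, $1=\det J_{\rm id}=2\sum D\,\hat A_{ijk}\le 2\max\hat A\sum D$, and $\hat A_{ijk}\le|\widehat T|\le 1$. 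Hence $\sum_{i<j<k}D\ge 1/2$ uniformly.
\end{itemize}
This avoids needing pointwise positivity of a specific $D$, and requires only the nonnegativity of each $D$ and the uniform lower bound on all vertex-triangle areas. Combining the two bounds yields $\det J_F\eqsim h_T^2$ with constants depending only on \textbf{H1}--\textbf{H5}.
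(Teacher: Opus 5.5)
Your upper bound is the same as the paper's. Your lower bound takes a genuinely different route. The paper fixes $\hat{\mathbf x}$ and uses Lemma~\ref{lem:wach_auxiliary} to find $m$ with $h_i(\hat{\mathbf x})\ge h_{*,\widehat T}/3$ for $i\ne m,m+1$. It then uses the explicit Floater--Kosinka decomposition of $D(z_m,z_{m+1},z_{m+2})$ into $T_{\alpha,\beta,\gamma}$-terms to bound the single coefficient $D(\lambda_{m,\hat T},\lambda_{m+1,\hat T},\lambda_{m+2,\hat T})(\hat{\mathbf x})$ from below. After that, only the consecutive triangle $\mathbf a_m\mathbf a_{m+1}\mathbf a_{m+2}$ of $T$ needs an area bound.

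You instead apply Lemma~\ref{lem:jacobian-expansion} with target $\widehat T$ itself. There the map $\hat{\mathbf x}\mapsto\sum_i\lambda_{i,\hat T}(\hat{\mathbf x})\hat{\mathbf a}_i$ is the identity by linear precision, so $2\sum_{i<j<k}D(\lambda_{i,\hat T},\lambda_{j,\hat T},\lambda_{k,\hat T})\,A(\hat{\mathbf a}_i,\hat{\mathbf a}_j,\hat{\mathbf a}_k)=1$. Hence
\[ \det(J_F(\hat{\mathbf x}))\ge \min_{i<j<k}\frac{A(\mathbf a_i,\mathbf a_j,\mathbf a_k)}{A(\hat{\mathbf a}_i,\hat{\mathbf a}_j,\hat{\mathbf a}_k)}. \]
This gains several things:
\begin{itemize}
\item It uses only the sign information $D\ge 0$, which the paper also invokes.
\item It needs neither Lemma~\ref{lem:wach_auxiliary} nor any pointwise lower bound on an individual $D$.
\item It gives an explicit constant.
\item It would work verbatim for any coordinates on $\widehat T$ with nonnegative $D$'s.
\end{itemize}
The price is that you need $A(\mathbf a_i,\mathbf a_j,\mathbf a_k)\gtrsim h_T^2$ for \emph{all} vertex triples, not just consecutive ones.

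That claim is true, but your justification for non-consecutive triples does not work. The height of $\mathbf a_k$ above the line through $\mathbf a_i,\mathbf a_j$ is not bounded below by $h_\ell(\mathbf a_k)$ for an edge $e_\ell$ at $\mathbf a_i$ not containing $\mathbf a_k$. For example, in a regular hexagon with circumradius $R$, the height of $\mathbf a_2$ above the diagonal $\mathbf a_1\mathbf a_3$ is $R/2$, whereas $h_6(\mathbf a_2)=\frac{\sqrt3}{2}R$.

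A correct argument goes as follows.
\begin{itemize}
\item Base: $|\mathbf a_i-\mathbf a_j|\ge h_\ell(\mathbf a_j)\ge C_2h_T$, where $\ell\in\{i-1,i\}$ is chosen so that $\mathbf a_j\notin e_\ell$.
\item Reduction to neighbours: if $[\mathbf a_i,\mathbf a_j]$ is a diagonal, the distance to its line is affine on the convex sub-polygon containing $\mathbf a_k$. It is therefore unimodal along the boundary chain from $\mathbf a_j$ to $\mathbf a_i$ through $\mathbf a_k$. So its value at $\mathbf a_k$ is at least the smaller of its values at the neighbours of $\mathbf a_j$ and $\mathbf a_i$ on that chain, say $\mathbf a_{j+1}$ and $\mathbf a_{i-1}$.
\item Height at a neighbour: at $\mathbf a_{j+1}$ the value is $|e_j|\sin\varphi$ with $\varphi=\angle\mathbf a_{j+1}\mathbf a_j\mathbf a_i$. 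Since $\mathbf a_i\notin e_j$, we have $C_2h_T\le h_j(\mathbf a_i)=|\mathbf a_i-\mathbf a_j|\sin\varphi\le h_T\sin\varphi$, so $\sin\varphi\ge C_2$. Together with $|e_j|\ge C_2h_T$, this height is at least $C_2^2h_T$. The neighbour $\mathbf a_{i-1}$ is handled symmetrically via $h_{i-1}(\mathbf a_j)$.
\item Conclusion: every vertex triangle has area at least $\frac12C_2^3h_T^2$.
\end{itemize}
A compactness argument over normalized $n$-gons satisfying \textbf{H2} would also do. With this repair your proof is complete.
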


\begin{proof}
We first prove the upper bound $\det(J_F(\hat{\mathbf x}))\lesssim h_T^2$. By Lemma \ref{lem:jacobian-expansion},
\begin{equation}\label{eq:det_expand_main}
\det(J_F(\hat{\mathbf x}))
=
2\sum_{1\le i<j<k\le n}
D(\lambda_{i,\hat{T}},\lambda_{j,\hat{T}},\lambda_{k,\hat{T}})(\hat{\mathbf x})\,
A(\mathbf a_i,\mathbf a_j,\mathbf a_k).
\end{equation}
Clearly \eqref{eq:lambda_hat} implies $|D(\lambda_{i,\hat{T}},\lambda_{j,\hat{T}},\lambda_{k,\hat{T}})(\hat{\mathbf x})|\leq 3\widehat{C}_\lambda^2$. Combining it with  $0\leq A(\mathbf a_i,\mathbf a_j,\mathbf a_k)\leq h_T^2$ and \eqref{eq:det_expand_main} gives
\[
\det(J_F(\hat{\mathbf x}))
\leq
6\binom{n}{3}\widehat{C}_\lambda^2h_T^2.
\]

We next prove the lower bound $\det(J_F(\hat{\mathbf x}))\gtrsim h_T^2$. In this proof, we adopt the notation $h_i(\hat{\mathbf{x}})=h_{i,\hat{T}}(\hat{\mathbf{x}})$, $w_i(\hat{\mathbf{x}})=w_{i,\hat{T}}(\hat{\mathbf{x}})$. Since $\widehat T$ is regular, all $\hat{\mathbf n}_{i-1}\times\hat{\mathbf n}_i$ are identical. Therefore, we redefine the Wachspress weights by
$w_i(\hat{\mathbf x})=\prod_{j\neq i-1,i}h_j(\hat{\mathbf x})$, 
$W_{\widehat T}(\hat{\mathbf x})=\sum_{i=1}^nw_i(\hat{\mathbf x})$ and introduce
\begin{align*}
&z_{i}(\hat{\mathbf x})=1/\big(h_{i-1}(\hat{\mathbf x})h_{i,\hat{T}}(\hat{\mathbf x})\big)=
w_i(\hat{\mathbf x})\big/\prod_{j=1}^n h_j(\hat{\mathbf x}),\\
&Z_{\widehat T}(\hat{\mathbf x})=\sum_{i=1}^n z_i(\hat{\mathbf x})=
W_{\widehat T}(\hat{\mathbf x})\big/\prod_{j=1}^n h_j(\hat{\mathbf x}),\\
&\lambda_{i,\hat{T}}(\hat{\mathbf x})=w_i(\hat{\mathbf x})/ W_{\widehat T}(\hat{\mathbf x})= z_i(\hat{\mathbf x})/Z_{\widehat T}(\hat{\mathbf x}).
\end{align*}

By Lemma \ref{lem:D-scaling}, $D(\lambda_{i,\hat{T}},\lambda_{j,\hat{T}},\lambda_{k,\hat{T}})
=
D(z_i, z_j, z_k) / Z_{\widehat T}^3$. By the decomposition formula in \cite{Floater2010}, we have 
\begin{align*}
&D(z_i, z_j, z_k)
=
z_i z_j z_k
\bigl(
T_{i,j,k}+T_{i,j-1,k-1}+T_{i-1,j,k-1}+T_{i-1,j-1,k}
\bigr),\\  
&T_{\alpha,\beta,\gamma} 
= 2h_\alpha(\hat{\mathbf{x}})^{-1} h_\beta(\hat{\mathbf{x}})^{-1} h_\gamma(\hat{\mathbf{x}})^{-1} 
d_{\alpha,\beta,\gamma}^{-1}\left|A(\bm b_\alpha,\bm b_\beta,\bm b_\gamma)\right|,
\end{align*}
where $\bm b_\alpha$, $\bm b_\beta$, $\bm b_\gamma$ are the intersections of the lines containing $e_\beta$ and $e_\gamma$, $e_\gamma$ and $e_\alpha$, and $e_\alpha$ and $e_\beta$, respectively, and $d_{\alpha,\beta,\gamma}$ denotes the diameter of the circumcircle of the triangle with vertices $\bm b_\alpha$, $\bm b_\beta$, $\bm b_\gamma$, see Figure~\ref{fig:T_intersect_b}.

It follows from Lemma \ref{lem:wach_auxiliary} that
\begin{equation}\label{eq:h_split_case}
 h_i(\hat{\mathbf x})\geq h_{*,\widehat T}/3,
\qquad
i\neq m,m+1,
\end{equation}
for some index $m$.
To estimate $D(\lambda_{m,\hat{T}},\lambda_{m+1,\hat{T}},\lambda_{m+2,\hat{T}})$, we calculate
\begin{align*}
    &z_m z_{m+1} z_{m+2}T_{m,m+1,m+2}/Z_{\widehat T}^3 \\
    &=
    \frac{w_{m}(\hat{\mathbf{x}})w_{m+1}(\hat{\mathbf{x}}) w_{m+2}(\hat{\mathbf{x}})}{W_{\widehat T}^3}
    \frac{1}{ h_{m}(\hat{\mathbf x}) h_{m+1}(\hat{\mathbf x}) h_{m+2}(\hat{\mathbf{x}})}
    \frac{2|A(\bm b_m,\bm b_{m+1},\bm b_{m+2})|}{d_{m,m+1,m+2}}.
\end{align*}

\begin{figure}[h]
    \centering
    \begin{tikzpicture}[scale=1, line join=round, line cap=round]

\coordinate (aalpha)    at (-0.45,0.00);
\coordinate (aalphaone) at (0.30,0.00);

\coordinate (abeta)     at (-2.00,1.30);
\coordinate (abetaone)  at (-1.65,0.80);

\coordinate (agamma)    at (1.55,0.85);
\coordinate (agammaone) at (1.88,1.62);

\coordinate (x)         at (0.08,1.05);

\path[name path global=ealpha] (-2.55,0) -- (2.55,0);

\path[name path global=ebeta] 
    ($(abeta)!-1.3!(abetaone)$) -- ($(abeta)!7.2!(abetaone)$);

\path[name path global=egamma] 
    ($(agamma)!-4.0!(agammaone)$) -- ($(agamma)!1.5!(agammaone)$);

\path[name intersections={of=ealpha and ebeta, by=bgamma}];
\path[name intersections={of=ealpha and egamma, by=bbeta}];
\path[name intersections={of=ebeta and egamma, by=balpha}];

\draw[thick] (-2.55,0) -- (2.55,0);
\draw[thick] ($(abeta)!-1.3!(abetaone)$) -- ($(abeta)!7.2!(abetaone)$);
\draw[thick] ($(agamma)!-4.0!(agammaone)$) -- ($(agamma)!1.5!(agammaone)$);

\foreach \p in {aalpha,aalphaone,abeta,abetaone,agamma,agammaone,balpha,bbeta,bgamma,x}
  \fill (\p) circle (2pt);

\node[below] at (aalpha)    {$\mathbf a_\beta$};
\node[below right] at (aalphaone) {$\mathbf a_{\beta+1}$};

\node[left]  at (abeta)     {$\mathbf a_\alpha$};
\node[left]  at (abetaone)  {$\mathbf a_{\alpha+1}$};

\node[right] at (agamma)    {$\mathbf a_\gamma$};
\node[right] at (agammaone) {$\mathbf a_{\gamma+1}$};

\node[below left]  at (bgamma) {$b_\gamma$};
\node[below right] at (bbeta)  {$b_\alpha$};
\node[right] at (balpha) {$b_\beta$};

\node[above] at (x) {$\mathbf x$};

\node[below] at ($(aalpha)!0.5!(aalphaone)$) {$e_\beta$};
\node[left]  at ($(abeta)!0.5!(abetaone)$) {$e_\alpha$};
\node[right] at ($(agamma)!0.5!(agammaone)$) {$e_\gamma$};

\end{tikzpicture}
    \caption{Illustration of the definition of $\bm b_\alpha$, $\bm b_\beta$, and $\bm b_\gamma$.}
    \label{fig:T_intersect_b}
\end{figure}

Since $\widehat T$ is regular, there exists $c_1=c_1(n)>0$ such that
\[
\frac{2|A(\bm b_m,\bm b_{m+1},\bm b_{m+2})|}
{d_{m,m+1,m+2}}
\ge c_1 .
\]
By \eqref{eq:h_split_case} and the fact that
$W_{\widehat T}(\hat{\mathbf x})\le n$, we obtain
\begin{align*}
&D(\lambda_{m,\hat{T}},\lambda_{m+1,\hat{T}},\lambda_{m+2,\hat{T}})(\hat{\mathbf{x}})=D(z_{m}, z_{m+1}, z_{m+2}) / Z_{\widehat T}^3\\
&\geq
n^{-3}
c_1
\frac{
w_{m}(\hat{\mathbf x})w_{m+1}(\hat{\mathbf x}) w_{m+2}(\hat{\mathbf x})}
{
h_{m}(\hat{\mathbf x})
h_{m+1}(\hat{\mathbf x})
h_{m+2}(\hat{\mathbf x})
}
\geq
n^{-3}c_1
\left(\frac{h_{*,\widehat T}}{3}\right)^{3n-9}
=:c_2 .
\end{align*}

By \textbf{H2}, $|\mathbf a_{m+1}-\mathbf a_m|\ge h_{*,T}$ and $h_{*,T}\ge C_2h_T$. Since $\mathbf a_{m+2}$ does not lie on the line through $\mathbf a_m$ and $\mathbf a_{m+1}$, its distance to this line is at least $h_{*,T}$. Hence
\[
A(\mathbf a_m,\mathbf a_{m+1},\mathbf a_{m+2})
\ge
\frac12|\mathbf a_{m+1}-\mathbf a_m|\,h_{*,T}
\ge \frac{1}{2}C_2^2h_T^2.
\]
Using \eqref{eq:det_expand_main} and the nonnegativity of all terms, we obtain
\[
\det(J_F(\hat{\mathbf x}))
\ge
2D(\lambda_{m,\hat{T}},\lambda_{m+1,\hat{T}},\lambda_{m+2,\hat{T}})(\hat{\mathbf x})
A(\mathbf a_m,\mathbf a_{m+1},\mathbf a_{m+2})
\ge c_2C_2^2h_T^2.
\]
This completes the proof.\qed
\end{proof}

Meanwhile, we are able to estimate the spectral matrix norms $\|J_F(\hat{\mathbf x})\|_2$ and $\|J_F^{-1}(\hat{\mathbf x})\|_2$.
\begin{lemma}\label{lem:Jacobian_norm}
For any $\hat{\mathbf x}\in\widehat T$, it holds that
\begin{align*}
    \|J_F(\hat{\mathbf x})\|_2&\lesssim h_T,\\
\|J_F^{-1}(\hat{\mathbf x})\|_2&\lesssim h_T^{-1}.
\end{align*}
\end{lemma}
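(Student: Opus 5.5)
The plan is to bound the entries of $J_F$ directly, and then to get $J_F^{-1}$ from the $2\times2$ adjugate formula together with the determinant lower bound of Lemma~\ref{lem:Jacobian_det}.

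\emph{Upper bound for $J_F$.} Since $\sum_i\lambda_{i,\hat T}\equiv1$ on $\widehat T$, we have $\sum_i\nabla\lambda_{i,\hat T}=0$. Differentiating \eqref{def_polygon_map} then gives, for any fixed vertex (say $\mathbf a_1$),
\[
J_F(\hat{\mathbf x})=\sum_{i=1}^n(\mathbf a_i-\mathbf a_1)\,\nabla\lambda_{i,\hat T}(\hat{\mathbf x})^\top .
\]
Each $|\mathbf a_i-\mathbf a_1|\le h_T$, and by \eqref{eq:lambda_hat} each $|\nabla\lambda_{i,\hat T}|\le\widehat C_\lambda(n)$. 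Hence
\[
\|J_F(\hat{\mathbf x})\|_2\le n\widehat C_\lambda(n)h_T .
\]
By \textbf{H5}, $n\le N$, so only finitely many reference polygons $\widehat T$ occur. The constant $\max_{3\le n\le N} n\widehat C_\lambda(n)$ therefore depends only on the mesh regularity.

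\emph{Bound for $J_F^{-1}$.} For a $2\times2$ matrix, $J_F^{-1}=\operatorname{adj}(J_F)/\det(J_F)$, and the adjugate has the same entries as $J_F$ up to sign and permutation. Thus $\|\operatorname{adj}(J_F)\|_2=\|J_F\|_2$. Equivalently, the singular values $\sigma_1\ge\sigma_2$ of $J_F$ satisfy $\|J_F^{-1}\|_2=1/\sigma_2=\sigma_1/(\sigma_1\sigma_2)=\|J_F\|_2/\det(J_F)$. Lemma~\ref{lem:Jacobian_det} gives $\det(J_F)>0$ with $\det(J_F)\gtrsim h_T^2$, and combining this with the first step yields
\[
\|J_F^{-1}(\hat{\mathbf x})\|_2\lesssim h_T/h_T^2=h_T^{-1}.
\]

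\emph{Main obstacle.} The only nontrivial ingredient is the uniform lower bound on the determinant, which is already provided by Lemma~\ref{lem:Jacobian_det}. The one remaining point to check is that the constants $\widehat C_\lambda(n)$ and $c_2(n)$ are uniform. This holds because $n$ ranges over the finite set $\{3,\dots,N\}$ by \textbf{H5}, so taking a maximum (for $\widehat C_\lambda$) or minimum (for $c_2$) over $n$ gives constants depending only on \textbf{H1}--\textbf{H6}.
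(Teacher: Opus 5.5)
Your proposal is correct and follows essentially the same route as the paper: you use $\sum_i\nabla\lambda_{i,\hat T}=0$ to subtract $\mathbf a_1$, bound $J_F$ by $h_T$ via \eqref{eq:lambda_hat}, and then combine the $2\times 2$ identity $\|J_F^{-1}\|_2=\|J_F\|_2/\det(J_F)$ with the lower bound of Lemma~\ref{lem:Jacobian_det}. Your extra remark that the $n$-dependent constants are uniform because $n\le N$ by \textbf{H5} makes explicit a point the paper leaves implicit.
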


\begin{proof}
We first prove $\|J_F(\hat{\mathbf x})\|_2\lesssim h_T$. Let $F(\hat{\mathbf x})=(F_1(\hat{\mathbf x}),F_2(\hat{\mathbf x}))^\top$, $\mathbf{a}_i = (a_i^1, a_i^2)^\top$,  
$F_\ell(\hat{\mathbf x})=\sum_{i=1}^n\lambda_{i,\hat{T}}(\hat{\mathbf x})a_i^\ell,
\ \ell=1,2$. Then
\[
J_F(\hat{\mathbf x})
=
\begin{pmatrix}
\sum_{i=1}^n(\partial_1\lambda_{i,\hat{T}})a_i^1 &
\sum_{i=1}^n(\partial_2\lambda_{i,\hat{T}})a_i^1\\[0.4em]
\sum_{i=1}^n(\partial_1\lambda_{i,\hat{T}})a_i^2 &
\sum_{i=1}^n(\partial_2\lambda_{i,\hat{T}})a_i^2
\end{pmatrix}.
\]
Since $\sum_{i=1}^n\lambda_{i,\hat{T}}(\hat{\mathbf{x}})=1$, we have $\sum_{i=1}^n\partial_r\lambda_{i,\hat{T}}(\hat{\mathbf x})=0$ for $r=1,2$. Thus \[
\sum_{i=1}^n(\partial_r\lambda_{i,\hat{T}})a_i^\ell
=
\sum_{i=2}^n(\partial_r\lambda_{i,\hat{T}})(a_i^\ell-a_1^\ell),
\]
for $r,\ell\in\{1,2\}$, and hence
\[
J_F(\hat{\mathbf x})
=
\begin{pmatrix}
\sum_{i=2}^n(\partial_1\lambda_{i,\hat{T}})(a_i^1-a_1^1) &
\sum_{i=2}^n(\partial_2\lambda_{i,\hat{T}})(a_i^1-a_1^1)\\[0.4em]
\sum_{i=2}^n(\partial_1\lambda_{i,\hat{T}})(a_i^2-a_1^2) &
\sum_{i=2}^n(\partial_2\lambda_{i,\hat{T}})(a_i^2-a_1^2)
\end{pmatrix}.
\]
Using \eqref{eq:lambda_hat} and $|a_i^\ell-a_1^\ell|\leq |\mathbf a_i-\mathbf a_1|\leq h_T$, we have
\begin{align*}
    \left|
\sum_{i=2}^n(\partial_r\lambda_{i,\hat{T}})(a_i^\ell-a_1^\ell)
\right|
&\leq
\left(\sum_{i=2}^n|\partial_r\lambda_{i,\hat{T}}|^2\right)^{1/2}
\left(\sum_{i=2}^n|a_i^\ell-a_1^\ell|^2\right)^{1/2}\\
&\leq (n-1)\widehat{C}_\lambda h_T,
\end{align*}
which implies that $\|J_F(\hat{\mathbf x})\|_2\lesssim h_T$.

Combining $\det(J_F(\hat{\mathbf x}))\eqsim h_T^2$ (Lemma \ref{lem:Jacobian_det}) with $\|J_F(\hat{\mathbf x})\|_2\lesssim h_T$ and
\[
\|J_F^{-1}(\hat{\mathbf x})\|_2
=
\frac{\|J_F(\hat{\mathbf x})\|_2}{\det(J_F(\hat{\mathbf x}))},
\]
we verify $\|J_F^{-1}(\hat{\mathbf x})\|_2\lesssim h_T^{-1}$ and complete the proof. \qed
\end{proof}

In principle, trace inequalities on a polygonal element $T$ can be proved using the transformation $F_T$ and Lemmas \ref{lem:Jacobian_norm} and \ref{lem:Jacobian_det}. A more direct proof without homogeneity argument can be found in, e.g., \cite{ChaumontFrelet2025}. 
\begin{lemma}[Trace inequality]\label{lem_trace}
Let $T$ be a polygon in $\mathcal{T}_h$, and let $e\subset\partial T$ be an edge. For any $v\in H^1(T)$, it holds that 
\[
\|v\|_{0,e}
\lesssim
h_T^{-1/2}\|v\|_{0,T}
+
h_T^{1/2}|v|_{1,T}.
\]
\end{lemma}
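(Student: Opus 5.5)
We prove the trace inequality with the homogeneity argument: pull $v$ back to the reference polygon $\widehat T$ through the Wachspress map $F=F_T$ of \eqref{def_polygon_map}, apply a fixed trace inequality on $\widehat T$, and push the result forward using Lemmas \ref{lem:Jacobian_det} and \ref{lem:Jacobian_norm}.

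\emph{Step 1: reference estimate.} By \textbf{H5}, $n=n(T)\le N$, so only finitely many reference polygons $\widehat T$ (regular $n$-gons with $h_{\widehat T}=1$) occur. On each one the standard trace theorem for Lipschitz domains gives
\[
\|\hat v\|_{0,\hat e}\le \widehat C(n)\,\|\hat v\|_{1,\widehat T}
\qquad\text{for every edge } \hat e\subset\partial\widehat T,
\]
and taking the maximum over $n\le N$ makes the constant uniform.

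\emph{Step 2: volume terms.} Set $\hat v=v\circ F$. Since $F$ is a bijection with $\det J_F\eqsim h_T^2$ by Lemma \ref{lem:Jacobian_det}, the change of variables gives
\[
\|\hat v\|_{0,\widehat T}^2=\int_T v^2\,|\det J_F^{-1}|\,d\mathbf x\eqsim h_T^{-2}\|v\|_{0,T}^2 .
\]
The chain rule gives $\hat\nabla\hat v=J_F^\top(\nabla v\circ F)$. With $\|J_F\|_2\lesssim h_T$ from Lemma \ref{lem:Jacobian_norm}, this yields $|\hat v|_{1,\widehat T}^2\lesssim h_T^2\,h_T^{-2}|v|_{1,T}^2=|v|_{1,T}^2$. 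Here $\hat v\in H^1(\widehat T)$ because $F$ is smooth up to the boundary and bi-Lipschitz, by Lemma \ref{lem:Jacobian_norm} and the fact that $\widehat T$ and $T$ are convex.

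\emph{Step 3: edge term.} Let $\hat e=[\hat{\mathbf a}_i,\hat{\mathbf a}_{i+1}]$, so that $F(\hat e)=e=[\mathbf a_i,\mathbf a_{i+1}]$ as in the surjectivity proof. On $\hat e$ the map reduces to $F=\lambda_{i,\hat T}\mathbf a_i+\lambda_{i+1,\hat T}\mathbf a_{i+1}$, and $\lambda_{i,\hat T}|_{\hat e}$ is linear. Hence $F|_{\hat e}$ is an affine bijection onto $e$ with constant stretch factor $|e|/|\hat e|$. Therefore
\[
\|v\|_{0,e}^2=\frac{|e|}{|\hat e|}\,\|\hat v\|_{0,\hat e}^2\lesssim h_T\,\|\hat v\|_{0,\hat e}^2 ,
\]
using $|e|\le h_T$ and the fact that $|\hat e|$ depends only on $n$.

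\emph{Conclusion.} Combining the three steps,
\[
\|v\|_{0,e}^2\lesssim h_T\bigl(h_T^{-2}\|v\|_{0,T}^2+|v|_{1,T}^2\bigr),
\]
and taking square roots gives the claimed inequality.

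The delicate step is the boundary restriction in Step 3. One must use the affine structure of $F$ on each edge, which comes from the Wachspress coordinates reducing to 1D linear interpolation there. This avoids having to estimate a tangential Jacobian on the boundary. If that structure were unavailable, the fallback would be the bound $|\partial_{\hat t}F|\le\|J_F\|_2\lesssim h_T$, which gives the same estimate.
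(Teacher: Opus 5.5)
Your proof is correct and follows the homogeneity route the paper indicates: pull back through $F_T$, bound the volume terms with Lemmas~\ref{lem:Jacobian_det} and \ref{lem:Jacobian_norm}, and use that $F_T$ is affine on each edge. The paper gives no details beyond a one-sentence remark and points to \cite{ChaumontFrelet2025} for a direct proof, so your write-up supplies the missing argument along the intended lines.
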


\section{Residual-based A Posterior Error Estimate}\label{sect:residual}
In this section, we derive a residual-based error estimator for the Wachspress finite element method \eqref{eq:WachspressFEM}.

\subsection{A Posteriori Error Upper Bound}
A crucial ingredient for establishing a posteriori error upper bound is the Scott-Zhang interpolation on polygonal meshes. By \textbf{H6}, the number of elements in each vertex patch $\Omega_{\mathbf a}$ is uniformly bounded by $M$, and hence
\begin{equation*}
h_T\le h_{\Omega_T}\leq 2^{\left[M/2\right]}h_T,
\end{equation*}
where $\Omega_T=\cup_{\mathbf a\in\mathcal V_T}\Omega_{\mathbf a}$ and $h_{\Omega_T}=\operatorname{diam}(\Omega_T)$.

Given a vertex $\mathbf{a}\in\mathcal{V}_h$, let $\lambda_{\mathbf{a}}\in V_h$ be the generalized hat function with $\lambda_{\mathbf{a}}(\mathbf{a}^\prime)=\delta_{\mathbf{a},\mathbf{a}^\prime}$ for any $\mathbf{a}^\prime\in\mathcal{V}_h$. The support of $\lambda_{\mathbf{a}}$ is  $\Omega_{\mathbf a}$. The restriction of $\lambda_{\mathbf{a}}$ to each element is a Wachspress GBC. Let $e_{\mathbf{a}}$ is an edge having $\mathbf{a}$ as an endpoint, and $e_{\mathbf{a}}$ is chosen as a boundary edge if $\mathbf{a}\in\partial\Omega$. The Scott-Zhang interpolation $\Pi_h$ is 
\begin{equation}\label{scottzhang_def}
\Pi_hv=\sum_{\mathbf{a}\in\mathcal{V}_h} \left(\int_{e_\mathbf{a}}v\psi_{\mathbf{a}}\right)\lambda_{\mathbf{a}}=\sum_{\mathbf{a}\in\mathcal{V}_h} L_{\mathbf{a}}(v)\lambda_{\mathbf{a}},
\end{equation}
where $\psi_{\mathbf a}\in\mathbb{P}_1(e_{\mathbf a})$ is a weight function determined by
$\int_{E_{\mathbf a}}\lambda_{\mathbf{a}^\prime}\psi_{\mathbf a}=\delta_{\mathbf{a},\mathbf{a}^\prime}$. 

The operator $\Pi_h$ satisfies the following estimates.

\begin{lemma}\label{lem_intplat}
For any $T\in\mathcal{T}_h$, $v\in H^1(\Omega_T)$ and $p\in\mathbb P_1(\Omega_T)$, it holds that
\begin{subequations}
    \begin{align}
\Pi_hp&=p~\text{ on }T,\label{eq:linear_preserve}\\
\|v-\Pi_hv\|_{0,T}&\lesssim h_T|v|_{1,\Omega_T},\label{eq:v-Pihv}\\
\|v-\Pi_hv\|_{0,\partial T}&\lesssim h_T^{1/2}|v|_{1,\Omega_T},\label{eq:v-Pihv_trace}\\
|\Pi_hv|_{1,T}&\lesssim |v|_{1,\Omega_T}.\label{eq:Pihv_H1}
\end{align}
\end{subequations}
\end{lemma}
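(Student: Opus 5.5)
Following the classical Scott--Zhang argument, we prove first the local stability of the functionals $L_{\mathbf a}$, then deduce the four estimates from linear preservation, a Bramble--Hilbert step on $\Omega_T$, and the Wachspress derivative bounds of Lemma~\ref{lem:wach_grad_bound}.

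\textbf{Step 1: linear preservation \eqref{eq:linear_preserve}.} On each edge $e_{\mathbf a}$ the restrictions of the $\lambda_{\mathbf a'}$ reduce to the 1D Lagrange hat functions of $\mathbb P_1(e_{\mathbf a})$. Hence $\psi_{\mathbf a}$ is the usual dual basis function on $e_{\mathbf a}$, and $\int_{e_{\mathbf a}}p\psi_{\mathbf a}=p(\mathbf a)$ for every $p\in\mathbb P_1$. Thus on $T$ we have $\Pi_hp=\sum_{\mathbf a\in\mathcal V_T}p(\mathbf a)\lambda_{\mathbf a}=I_Tp=p$, because $I_T$ reproduces linear functions. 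For interior vertices this uses only that $p\in\mathbb P_1(\Omega_T)$. For boundary vertices one needs $v=0$ on $\partial\Omega$, which is consistent with the choice of $e_{\mathbf a}$ on the boundary; in the approximation estimates we therefore apply this property with $p$ a general linear function restricted to $\Omega_T$.

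\textbf{Step 2: stability of $L_{\mathbf a}$.} By scaling on the 1D edge, $\|\psi_{\mathbf a}\|_{L^\infty(e_{\mathbf a})}\lesssim |e_{\mathbf a}|^{-1}\lesssim h_T^{-1}$, using \textbf{H2} to get $|e_{\mathbf a}|\eqsim h_T$. Let $T'\subset\Omega_T$ be an element containing $e_{\mathbf a}$. Lemma~\ref{lem_trace} on $T'$ together with $h_{T'}\eqsim h_T$ (which follows from \textbf{H6} and the patch-diameter bound) gives
\[
|L_{\mathbf a}(v)|\lesssim h_T^{-1/2}\|v\|_{0,e_{\mathbf a}}\lesssim h_T^{-1}\|v\|_{0,\Omega_T}+|v|_{1,\Omega_T}.
\]
By Lemma~\ref{lem:wach_grad_bound} we have $\|\lambda_{\mathbf a}\|_{0,T}\lesssim h_T$ and $|\lambda_{\mathbf a}|_{1,T}\lesssim 1$, using $|T|\lesssim h_T^2$. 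Since \textbf{H5} bounds the number of vertices of $T$, summing over them yields
\[
\|\Pi_hv\|_{0,T}\lesssim \|v\|_{0,\Omega_T}+h_T|v|_{1,\Omega_T},\qquad |\Pi_hv|_{1,T}\lesssim h_T^{-1}\|v\|_{0,\Omega_T}+|v|_{1,\Omega_T}.
\]

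\textbf{Step 3: approximation.} Write $v-\Pi_hv=(v-p)-\Pi_h(v-p)$ on $T$ with $p\in\mathbb P_0$, which is enough for first-order estimates. Steps 1--2 then give
\[
\|v-\Pi_hv\|_{0,T}+h_T|\Pi_hv|_{1,T}\lesssim \|v-p\|_{0,\Omega_T}+h_T|v|_{1,\Omega_T},
\]
where we used $\nabla\Pi_hp=0$ in the gradient bound. Choose $p$ as the mean of $v$ on $\Omega_T$. A Poincar\'e inequality $\|v-\bar v\|_{0,\Omega_T}\lesssim h_T|v|_{1,\Omega_T}$ then yields \eqref{eq:v-Pihv} and \eqref{eq:Pihv_H1}.

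For boundary vertices, where $L_{\mathbf a}(p)=0\neq p(\mathbf a)$, the standard fix applies. The patch $\Omega_T$ then touches $\partial\Omega$ along an edge of length $\eqsim h_T$, so Friedrichs' inequality $\|v\|_{0,\Omega_T}\lesssim h_T|v|_{1,\Omega_T}$ holds for $v\in H_0^1$, and we take $p=0$. Finally, \eqref{eq:v-Pihv_trace} follows by applying Lemma~\ref{lem_trace} to $v-\Pi_hv$ on each edge of $T$ and combining \eqref{eq:v-Pihv} with \eqref{eq:Pihv_H1}.

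\textbf{Main obstacle.} The delicate point is the uniformity of the Poincar\'e and Friedrichs constants on the polygonal patch $\Omega_T$. The patch is a union of at most $\sim NM$ convex elements, each with comparable diameter by \textbf{H1}--\textbf{H6}, but it is not itself convex. The plan is to chain Poincar\'e inequalities on convex elements, where the constant is $\lesssim h_T$ by the Payne--Weinberger bound, across shared edges of length $\eqsim h_T$. Differences of element means are controlled through the trace inequality of Lemma~\ref{lem_trace}, and the bounded number of elements in the patch keeps the resulting constant uniform.
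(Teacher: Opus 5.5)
Your proof is correct and follows the paper's argument essentially step for step: dual-basis linear preservation, trace-based stability of $L_{\mathbf a}$, a constant approximation of $v$ on $\Omega_T$ via Bramble--Hilbert/Poincar\'e, and Lemma~\ref{lem_trace} for \eqref{eq:v-Pihv_trace}. Your chaining argument for a uniform Poincar\'e constant on the nonconvex patch supplies a detail the paper leaves implicit. The one blemish is the boundary-vertex ``fix'': with $e_{\mathbf a}\subset\partial\Omega$ as in the paper's definition, one still has $L_{\mathbf a}(p)=\int_{e_{\mathbf a}}p\psi_{\mathbf a}=p(\mathbf a)$, exactly as your own Step~1 shows, and only $L_{\mathbf a}(v)$ vanishes for $v$ with zero trace. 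The Friedrichs detour should therefore be dropped, since it would needlessly restrict the lemma to $v\in H_0^1(\Omega)$.
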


\begin{proof}
Any $p\in\mathbb{P}_1(\Omega_T)$ can be decomposed as $p=\sum_{\mathbf{a}\in T} c_{\mathbf a}\lambda_{\mathbf a}$. The definition of $\psi_{\mathbf a}$ ensures $L_{\mathbf{a}^\prime}(\lambda_{\mathbf{a}})=\delta_{\mathbf{a},\mathbf{a}^\prime}$ and $\Pi_h\lambda_{\mathbf{a}}=\lambda_{\mathbf{a}}$, which implies  \eqref{eq:linear_preserve}.

By the Bramble-Hilbert lemma, we can pick
a $w\in\mathbb P_0(\Omega_T)$ satisfying 
\begin{equation}\label{eq:BH_star}
\|v-w\|_{0,\Omega_T}\lesssim h_{\Omega_T}|v|_{1,\Omega_T}.
\end{equation}
Using $|\psi_{\mathbf a}|\lesssim h_{e_{\mathbf a}}^{-1}$, Lemma \ref{lem_trace} and \eqref{eq:BH_star}, we have 
\begin{equation*}
    \begin{aligned}
        |L_{\mathbf a}(v-w)|
        &\leq\|\psi_{\mathbf a}\|_{0,e_{\mathbf a}}\|v-w\|_{0,e_{\mathbf a}}\lesssim h_{e_{\mathbf a}}^{-\frac{1}{2}}\|v-w\|_{0,e_{\mathbf a}}\\
        &
        \lesssim h_T^{-1}\|v-w\|_{0,\Omega_T}+ |v - w|_{1, \Omega_T} \lesssim |v|_{1,\Omega_T}.
    \end{aligned}
\end{equation*}
Combining the above bound for $L_{\mathbf{a}}(v-w)$ with $\|\lambda_{\mathbf a}\|_{0,T}\lesssim h_T$ and $\|\nabla\lambda_{\mathbf a}\|_{0,T}\lesssim1$, we arrive at
\begin{equation}\label{eq:Piv-w}
\begin{aligned}
\|\Pi_h(v-w)\|_{0,T}&\lesssim h_T|v|_{1,\Omega_T},\\
|\Pi_h(v-w)|_{1,T}&\lesssim |v|_{1,\Omega_T}.    
\end{aligned}
\end{equation}
Using $\Pi_hw=w$ and \eqref{eq:Piv-w}, we obtain
\begin{align*}
    \|v-\Pi_h v\|_{0,T}&=\|v-w-\Pi_h (v-w)\|_{0,T}\\
    &\leq\|v-w\|_{0,T}+\|\Pi_h (v-w)\|_{0,T}\\
    &\lesssim h_T|v|_{\Omega_T}
\end{align*}
and verified \eqref{eq:v-Pihv}. The bound \eqref{eq:Pihv_H1} is proved in a similar way. The bound \eqref{eq:v-Pihv_trace} follows from \eqref{eq:v-Pihv}, \eqref{eq:Pihv_H1} and Lemma \ref{lem_trace}. \qed
\end{proof}

Let $\mathcal E_h$ be the set of interior edges. Each $e\in\mathcal{E}_h$ is assigned with a unit normal vector $\mathbf{n}_e$. Let $\llbracket\frac{\partial u_h}{\partial\mathbf n_e}\rrbracket$ denote the jump of the normal derivative of $u_h$ across $e$. We define the residual-based error indicator as follows. 
\begin{equation}\label{eq_eta_total}
    \eta_T^2
    =
    h_T^2\|f+\Delta u_h\|_{0,T}^2
    +\frac12\sum_{e\in\mathcal E_h,\, e\subset\partial T} h_e\left\|\jump*{\frac{\partial u_h}{\partial\mathbf n_e}}\right\|_{0,e}^2.
\end{equation}

\begin{theorem}[Reliability]\label{thm:reliability_res}
For the Wachspress FE method \eqref{eq:WachspressFEM}, 
\[
|u-u_h|_{1,\Omega}\lesssim \eta_h:=\left(\sum_{T\in\mathcal T_h}\eta_T^2\right)^\frac{1}{2},
\]
\end{theorem}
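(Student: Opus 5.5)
The plan is the standard residual argument, with the Scott--Zhang operator $\Pi_h$ of Lemma~\ref{lem_intplat} playing the role of the quasi-interpolant. Set $e=u-u_h\in H_0^1(\Omega)$. Since $\Pi_h$ uses boundary edges for boundary vertices, $\Pi_h e\in V_h$ has zero boundary trace. Galerkin orthogonality, from \eqref{eq:WachspressFEM} and the weak form, then gives
\[
|e|_{1,\Omega}^2=(\nabla(u-u_h),\nabla e)=(f,e-\Pi_h e)-(\nabla u_h,\nabla(e-\Pi_h e)).
\]

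Next integrate by parts elementwise. On each convex polygon $T$, $u_h|_T\in\Lambda_T$ is a rational function smooth on $\overline T$, since the denominator $W_T$ is positive on $T$. Hence $\Delta u_h\in L^2(T)$, $\nabla u_h\cdot\mathbf n$ has an $L^2$ trace on $\partial T$, and Green's formula applies. Writing $v=e-\Pi_h e$, which vanishes on $\partial\Omega$, and collecting the two contributions from each interior edge, we get
\[
|e|_{1,\Omega}^2=\sum_{T\in\mathcal T_h}(f+\Delta u_h,v)_T-\sum_{e'\in\mathcal E_h}\Big(\jump*{\tfrac{\partial u_h}{\partial\mathbf n_{e'}}},v\Big)_{e'}.
\]

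Each term is bounded by Cauchy--Schwarz together with Lemma~\ref{lem_intplat}. The volume term satisfies
\[
\|f+\Delta u_h\|_{0,T}\,\|v\|_{0,T}\lesssim h_T\|f+\Delta u_h\|_{0,T}\,|e|_{1,\Omega_T}
\]
by \eqref{eq:v-Pihv}. For an interior edge $e'\subset\partial T$, we use \eqref{eq:v-Pihv_trace} to obtain
\[
\|\jump*{\partial_{\mathbf n}u_h}\|_{0,e'}\,\|v\|_{0,e'}\lesssim h_T^{1/2}\|\jump*{\partial_{\mathbf n}u_h}\|_{0,e'}\,|e|_{1,\Omega_T}.
\]
We then need $h_T\eqsim h_{e'}$. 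This follows from \textbf{H2}, since every edge length is at least $h_{*,T}\ge C_2h_T$, and trivially $h_{e'}\le h_T$. Splitting each interior edge's contribution equally between its two neighbours reproduces the factor $\tfrac12$ in \eqref{eq_eta_total}. A discrete Cauchy--Schwarz inequality then gives
\[
|e|_{1,\Omega}^2\lesssim\Big(\sum_T\eta_T^2\Big)^{1/2}\Big(\sum_T|e|_{1,\Omega_T}^2\Big)^{1/2}.
\]

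The last step, and the only one that needs care, is the finite overlap bound $\sum_T|e|_{1,\Omega_T}^2\lesssim|e|_{1,\Omega}^2$. Each $\Omega_T$ is a union of at most $N$ vertex patches by \textbf{H5}. An element $T'$ lies in $\Omega_T$ only if $T$ and $T'$ share a vertex, and by \textbf{H5} and \textbf{H6} the number of such $T$ is at most $NM$. So every element is counted a uniformly bounded number of times. Dividing by $|e|_{1,\Omega}$ concludes the proof.

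The main potential obstacle is justifying that the constants in \eqref{eq:v-Pihv} and \eqref{eq:v-Pihv_trace} depend only on \textbf{H1}--\textbf{H6}, which Lemma~\ref{lem_intplat} already supplies. The remaining concern is the regularity needed for the elementwise integration by parts with rational shape functions, which holds because $W_T>0$ on $\overline T$.
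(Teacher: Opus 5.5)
Your proposal is correct and follows the paper's proof: Galerkin orthogonality with the Scott--Zhang interpolant, then elementwise integration by parts, then Cauchy--Schwarz combined with Lemma~\ref{lem_intplat}, and finally the choice $v=u-u_h$. You also fill in details the paper leaves implicit, namely that $\Pi_h(u-u_h)$ has zero boundary trace, that $h_e\eqsim h_T$ by \textbf{H2}, and the finite-overlap bound for $\sum_T|u-u_h|_{1,\Omega_T}^2$ via \textbf{H5}--\textbf{H6}.
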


\begin{proof}
For any $v\in H_0^1(\Omega)$, By Galerkin orthogonality and element-wise integration by parts, we have
\begin{equation}\label{eq:residual_representation}
\begin{aligned}
&(\nabla(u-u_h),\nabla v)
=
(\nabla(u-u_h),\nabla (v-\Pi_h v))\\
&=
\sum_{T\in\mathcal T_h}(f+\Delta u_h,v-\Pi_h v)_T
-
\sum_{e\in\mathcal E_h}\int_e \jump*{\frac{\partial u_h}{\partial\mathbf n_e}}\,(v-\Pi_h v). 
\end{aligned}
\end{equation}
Combining \eqref{eq:residual_representation} with a Cauchy-Schwarz inequality and Lemma \ref{lem_intplat}, we obtain
\[
(\nabla(u-u_h),\nabla v)
\lesssim
\left(
\sum_{T\in\mathcal T_h} h_T^2\|f+\Delta u_h\|_{0,T}^2
+
\sum_{e\in\mathcal E_h} h_T\left\|\jump*{\frac{\partial u_h}{\partial\mathbf n_e}}\right\|_{0,e}^2
\right)^{1/2}
|v|_{1,\Omega}.
\]
Taking $v=u-u_h$ then yields $|u-u_h|_{1,\Omega}\lesssim \eta_h.$ \qed
\end{proof}

\subsection{A Posteriori Error Lower Bound}
This section is devoted to the proof of the a posteriori error lower bound. 
\begin{theorem}[Efficiency]\label{thm:efficiency}
Let $f_T=|T|^{-1}\int_T f$. It holds that
\[
\eta_h^2 \lesssim |u-u_h|_{1,\Omega}^2+\sum_{T\in\mathcal T_h} h_T^2\|f-f_T\|_{0,T}^2.
\]
\end{theorem}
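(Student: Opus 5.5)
We will follow Verfürth's bubble-function technique and estimate the two parts of $\eta_T$ separately. The new ingredient is that the bubble functions and inverse estimates are transported to the reference polygon $\widehat T$ through the map $F_T$ from \eqref{def_polygon_map}. Lemmas \ref{lem:Jacobian_det} and \ref{lem:Jacobian_norm} then give uniform control of the change of variables. The complication is that $\Delta u_h|_T$ and $\partial u_h/\partial\mathbf n_e$ are rational functions, not polynomials. Their pullbacks are therefore not in a fixed finite-dimensional space, so plain norm equivalence on $\widehat T$ is not immediately available.

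\emph{Element residual.} Set $R_T=f_T+\Delta u_h$ on $T$ and use the bubble $b_T=\prod_{j=1}^n h_{j,T}/h_T^{n}$, or the simpler $b_T=\prod_{i}\lambda_{i,T}$ normalized on $\widehat T$. This bubble vanishes on $\partial T$, satisfies $0\le b_T\lesssim 1$, and has $|\nabla b_T|\lesssim h_T^{-1}$ by Lemma \ref{lem:wach_grad_bound}. We want the inverse estimates
\[
\|R_T\|_{0,T}^2\lesssim (R_T,b_TR_T)_T,\qquad |b_TR_T|_{1,T}\lesssim h_T^{-1}\|R_T\|_{0,T}.
\]
To get them we pull back to $\widehat T$. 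Since $\lambda_{i,T}=w_{i,T}/W_T$ and $W_T$ is a polynomial of degree $n-2$ that is bounded above and below on $T$ (by \eqref{eq:A_i_bound}, \eqref{eq:h_j_bound} and Lemma \ref{lem:wach_auxiliary}), we can write $W_T^{3}\Delta u_h$ as a polynomial of degree bounded in terms of $N$ (from \textbf{H5}). Hence $R_T = q/W_T^3$ with $q$ in a fixed polynomial space. The \emph{homogeneity argument} then runs as follows. After scaling so that $h_T=1$, the set of admissible element geometries satisfying \textbf{H1}--\textbf{H3} with $n\le N$ is compact. On it, the quotients $\|q/W^3\|^2/(q/W^3,b\,q/W^3)$ and $|b\,q/W^3|_1/\|q/W^3\|$ are continuous in both the geometry and $q$, and degree-0 homogeneous in $q$. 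Maximizing over geometries and over the unit sphere of $q$ therefore gives uniform constants. This compactness/homogeneity step is the main obstacle: we must verify that nondegeneracy (positivity of $W$ and of $b$ in the interior) persists on the closure of the admissible set, and we rely on \textbf{H2} and Lemma \ref{lem:wach_auxiliary} for this.

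With the inverse estimates in hand, we test the error equation with $v=b_TR_T\in H_0^1(T)$. Since $\int_T\nabla(u-u_h)\cdot\nabla v=(f+\Delta u_h,v)_T$, we get
\[
\|R_T\|_{0,T}^2\lesssim (f+\Delta u_h,v)_T-(f-f_T,v)_T,
\]
and hence
\[
h_T\|R_T\|_{0,T}\lesssim |u-u_h|_{1,T}+h_T\|f-f_T\|_{0,T}.
\]
The triangle inequality then gives the bound for $h_T\|f+\Delta u_h\|_{0,T}$.

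\emph{Jump residual.} Let $e$ be an interior edge shared by $T_1$ and $T_2$, and let $\omega_e=T_1\cup T_2$. We use the edge bubble $b_e=\lambda_{i,T_k}\lambda_{i+1,T_k}$ on each $T_k$, where $\mathbf a_i,\mathbf a_{i+1}$ are the endpoints of $e$. This function is continuous, vanishes on $\partial\omega_e$, and on $e$ equals the 1D quadratic bubble. The jump $J=\jump{\partial u_h/\partial\mathbf n_e}$ is again rational on $e$ with a controlled denominator. We extend it to $\omega_e$ by $E J$, defined as constant along a fixed direction transversal to $e$ (for instance through the pullback by $F_{T_k}$ and the reference edge), so that $E J$ is rational with controlled denominators. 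The same homogeneity argument gives
\[
\|J\|_{0,e}^2\lesssim (J,b_eJ)_e,\qquad \|b_eEJ\|_{0,\omega_e}\lesssim h_e^{1/2}\|J\|_{0,e},\qquad |b_eEJ|_{1,\omega_e}\lesssim h_e^{-1/2}\|J\|_{0,e}.
\]
Testing the error equation with $v=b_eEJ$ and using the element bound already proved, we obtain
\[
h_e^{1/2}\|J\|_{0,e}\lesssim |u-u_h|_{1,\omega_e}+\sum_{k=1,2}h_{T_k}\|f-f_{T_k}\|_{0,T_k}.
\]
Finally, we sum over $T$. By \textbf{H5}, each element appears in a bounded number of patches $\omega_e$, so the overlap is finite and the global estimate follows.
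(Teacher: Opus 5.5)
Your overall plan (Verf\"urth bubbles, element residual first, then jumps, finite overlap) matches the paper's. Your element step is fine, but it reaches the bubble and inverse estimates by a different route.

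\emph{Element residual.} You use compactness of the scaled admissible geometries. The paper instead observes that any polynomial in $\mathbf x$ pulls back under $F_T$ to a polynomial in the reference coordinates $\lambda_{i,\hat T}$. Hence $\widehat W^3\hat v$ lies in the single $T$-independent space $\widehat U_{3n-6}$. Norm equivalence on that space, together with the two-sided bounds on $\hat w_i$ and $\widehat W$ from Lemma~\ref{lem:What}, gives uniform constants with no continuity-in-geometry argument. Those bounds also give a fixed lower bound for the pulled-back bubble, a constant times $(\lambda_{1,\hat T}\lambda_{2,\hat T}\lambda_{3,\hat T})^{n-2}$. So the pullbacks are not in a fixed space, but their numerators are. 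Your compactness route also works, at the cost of non-constructive constants and the closedness and continuity checks you list.

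\emph{The gap: the extension $EJ$ in the jump step.} Along a fixed transversal direction, the shadow of $T_k$ on the line $\ell_e\supset e$ is generally longer than $e$. Staying inside the strip over $e$ would require the interior angles of $T_k$ at the endpoints of $e$ to sum to at most $\pi$. So you need $J$ on $\ell_e\setminus e$, and asking $EJ$ to be rational forces you to use the rational continuation of $J$. That continuation has poles inside the shadow.

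Concretely, let $e=e_m$ of $T_k$. On $\ell_e$ one has $W=|e|\sin\theta_m\sin\theta_{m+1}\prod_{j\ne m-1,m,m+1}h_j$. For example, $\partial_{\mathbf n_e}\lambda_{m+3}|_{\ell_e}$ is a constant multiple of $h_{m-1}h_{m+1}/(h_{m+2}h_{m+3})$, which has a simple pole where $\ell_e$ meets the line of $e_{m+2}$.

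Take a regular decagon with $|e|=1$. These poles lie about $0.618$ beyond each endpoint of $e$. A direction making angle $\phi$ with $\mathbf a_{m+1}-\mathbf a_m$ carries the right pole into the interior of $T_k$ if $\phi>72^\circ$, and the left one if $\phi<108^\circ$. So every direction fails. Since $F_{T_k}$ is a similarity for a regular decagon, doing the construction on $\widehat T$ changes nothing. Consequently $b_eEJ\notin L^2(\omega_e)$, and both your claimed bounds on $b_eEJ$ and the compactness step break down.

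\emph{The repair} is the paper's construction. Continue $J$ past the endpoints of $e$ by its endpoint values, then extend constantly along $\mathbf n_e$. The resulting $\tilde q$ is only Lipschitz, not rational. Still, $\|\tilde q\|_{0,\omega_e}\lesssim h_e^{1/2}\|J\|_{0,e}$ and $\|\nabla\tilde q\|_{0,\omega_e}\lesssim h_e^{-1/2}\|J\|_{0,e}$ follow from the one-dimensional inverse estimates $\|J\|_{L^\infty(e)}\lesssim h_e^{-1/2}\|J\|_{0,e}$ and $\|\partial_{\mathbf t_e}J\|_{0,e}\lesssim h_e^{-1}\|J\|_{0,e}$. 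Your homogeneity argument, applied on $[0,1]$ to $\hat p_e/((\widehat W^+)^2(\widehat W^-)^2)$, does provide these. With $0\le b_e\le 1$ and $|\nabla b_e|\lesssim h_e^{-1}$, the rest of your jump argument then goes through.
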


For Wachspress FE, the mapping $F_T$ is nonlinear, and the pullback of the relevant space is no longer a polynomial space. To quantify the effect of pullbacks, we need the results in Section \ref{sect:mapping} and the following estimate on the Wachspress total weight $W=\sum_{i=1}^nw_i$.
\begin{lemma}\label{lem:What}
For $T\in\mathcal{T}_h$ and $\hat{\mathbf x}\in\widehat T$, it holds that
\begin{subequations}
    \begin{align}
        c_M (C_2\lambda_{i,\hat{T}}(\hat{\mathbf x})h_T)^{n-2}\le \hat w_i(\hat{\mathbf{x}})\le h_T^{n-2}, \label{eq:w_ihat}\\
        c_0 h_T^{n-2}\leq \widehat W(\hat{\mathbf x})\leq n h_T^{n-2}, \label{eq:What}\\
        |\nabla \widehat W(\hat{\mathbf x})|\leq n^3 \widehat{C}_\lambda h_T^{n-2}, \label{eq:gradWhat}
    \end{align}
\end{subequations}
where the constant $c_0>0$ depends only on $C_2$, $N$, $\theta^*$, $\theta_*$.
\end{lemma}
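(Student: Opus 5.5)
The plan is to work directly with the pullbacks $\hat w_i=w_{i,T}\circ F_T=M_i\prod_{j\neq i-1,i}\hat h_{j,T}$ and the representation \eqref{eq:h_hat}, $\hat h_{j,T}=\sum_k\lambda_{k,\hat T}h_{j,T}(\mathbf a_k)$, which expresses every pulled-back distance function as a combination of reference Wachspress coordinates with coefficients controlled by \eqref{eq:h_j_bound}.

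\emph{Bound \eqref{eq:w_ihat}.} For the upper bound, each $h_{j,T}(\mathbf a_k)\in[0,h_T]$ and the $\lambda_{k,\hat T}$ form a partition of unity, so $0\le\hat h_{j,T}\le h_T$. Together with $M_i\le1$ from \eqref{eq:A_i_bound}, this gives $\hat w_i\le h_T^{n-2}$.

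For the lower bound, fix $j\notin\{i-1,i\}$. The edge $e_j$ does not contain $\mathbf a_i$, so \eqref{eq:h_j_bound} gives $h_{j,T}(\mathbf a_i)\ge C_2h_T$. Since all terms in \eqref{eq:h_hat} are nonnegative, dropping all but the $k=i$ term yields $\hat h_{j,T}\ge\lambda_{i,\hat T}C_2h_T$. Taking the product over the $n-2$ admissible indices $j$ and using $M_i\ge c_M$ gives $\hat w_i\ge c_M(C_2\lambda_{i,\hat T}h_T)^{n-2}$.

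\emph{Bound \eqref{eq:What}.} The upper bound follows by summing the upper bound in \eqref{eq:w_ihat} over $i$. The lower bound is the only genuinely nontrivial step, because a single index $i$ may have small $\lambda_{i,\hat T}(\hat{\mathbf x})$. I handle it with a pigeonhole argument: since $\sum_i\lambda_{i,\hat T}=1$, some index $i$ satisfies $\lambda_{i,\hat T}(\hat{\mathbf x})\ge 1/n$. Then
\[
\widehat W\ge\hat w_i\ge c_M\bigl(C_2h_T/n\bigr)^{n-2}\ge c_M(C_2/N)^{N-2}h_T^{n-2},
\]
using \textbf{H5} and, without loss of generality, $C_2\le1$. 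Hence $c_0=c_M(C_2/N)^{N-2}$, which depends only on $C_2,N,\theta^*,\theta_*$ as claimed.

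\emph{Bound \eqref{eq:gradWhat}.} Differentiate the product defining $\hat w_i$ by the product rule. This produces $n-2$ terms, each consisting of $n-3$ factors bounded by $h_T$ times one factor $\nabla\hat h_{j,T}=\sum_k\nabla\lambda_{k,\hat T}\,h_{j,T}(\mathbf a_k)$. By \eqref{eq:lambda_hat}, each such gradient satisfies $|\nabla\hat h_{j,T}|\le n\widehat C_\lambda h_T$. With $M_i\le1$ this gives $|\nabla\hat w_i|\le(n-2)n\widehat C_\lambda h_T^{n-2}$, and summing over $i$ yields $|\nabla\widehat W|\le n^3\widehat C_\lambda h_T^{n-2}$.

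The only place requiring care is the lower bound on $\widehat W$. The uniform constant there comes from combining the pigeonhole choice of $i$ with \textbf{H2}; everything else is direct bookkeeping from \eqref{eq:h_hat}, \eqref{eq:A_i_bound}, \eqref{eq:h_j_bound} and \eqref{eq:lambda_hat}.
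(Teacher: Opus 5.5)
Your proposal is correct and follows the paper's proof essentially step for step: the same expansion $\hat w_i=M_i\prod_{j\neq i-1,i}\sum_k\lambda_{k,\hat T}h_{j}(\mathbf a_k)$, keeping only the $k=i$ term for the lower bound, and the same product-rule count for $\nabla\widehat W$. The one deviation is the lower bound on $\widehat W$. You pick a single index with $\lambda_{i,\hat T}(\hat{\mathbf x})\ge 1/n$, whereas the paper sums over $i$ and uses Jensen's inequality $\sum_i\lambda_{i,\hat T}^{n-2}\ge n^{3-n}$; your choice costs only a harmless factor of $n$ in $c_0$.
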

\begin{proof}
Using the expression \eqref{eq:h_hat}, we have
\begin{equation}\label{eq:w_hat}
\hat w_i(\hat{\mathbf x})= M_i\prod_{j\neq i-1,i}\hat{h}_j= M_i\prod_{j\neq i-1,i}\sum_{k=1}^{n}\lambda_{k,\hat{T}}(\hat{\mathbf x})h_{j}(\mathbf a_k).
\end{equation}
It then follows from \eqref{eq:A_i_bound}, \eqref{eq:h_j_bound} and \eqref{eq:w_hat} that
\begin{align*}
    c_M (C_2\lambda_{i,\hat{T}}(\hat{\mathbf x})h_T)^{n-2}
    \le c_M\prod_{j\neq i-1,i}\lambda_{i,\hat{T}}(\hat{\mathbf x})h_{j}(\mathbf a_i)
    \le
    \hat w_i(\hat{\mathbf x}) 
    \le h_T^{n-2}, \\
    c_M(C_2h_T)^{n-2}\sum_{i=1}^n\lambda_{i,\hat{T}}^{n-2}(\hat{\mathbf x})
    \le
    \widehat W(\hat{\mathbf x})
    \le
    nh_T^{\,n-2},
\end{align*}
which imply \eqref{eq:w_ihat} and \eqref{eq:What} with $c_0:=c_M C_2^{\,n-2}n^{3-n}$ and the Jensen's inequality $\sum_{i=1}^n \lambda_{i,\hat{T}}^{\,n-2}(\hat{\mathbf{x}}) \ge n^{3-n}$. By \eqref{eq:A_i_bound}, \eqref{eq:h_j_bound} and Lemma \ref{lem:wach_grad_bound}, we obtain 
\begin{align*}
     |\nabla \widehat W(\hat{\mathbf x})| 
     \leq& 
     \sum_{i=1}^n \sum_{\substack{l=1 \\ l\neq i-1,i}}^{n}
     \Big(\prod_{j\neq i-1,i,l} \sum_{k=1}^{n}\lambda_{k,\hat{T}}(\hat{\mathbf x})h_{j}(\mathbf a_k) \Big)
     \Big( \sum_{k=1}^{n} |\nabla\lambda_{k,\hat{T}}(\hat{\mathbf x})|h_{l}(\mathbf a_k) \Big)\\
     \leq&
     \sum_{i=1}^n \sum_{\substack{l=1 \\ l\neq i-1,i}}^{n} n\widehat{C}_\lambda h_T^{n-2}\le n^3\widehat{C}_{\lambda}h_T^{n-2}.
\end{align*}
This proves \eqref{eq:gradWhat}.\qed
\end{proof}

For $T\in\mathcal{T}_h$ and the reference element $\widehat T$, we introduce 
\begin{align*}
V_T&:=\operatorname{span}\{\lambda_1,\dots,\lambda_n,\Delta\lambda_1,\dots,\Delta\lambda_n\},\quad
\widehat V_T:=\{v\circ F_T:\ v\in V_T\}\\
\widehat{U}_r
&:=
\operatorname{span}\Big\{\lambda_{1,\hat{T}}^{\alpha_1}\cdots \lambda_{n,\hat{T}}^{\alpha_n}
:\ (\alpha_1,\ldots,\alpha_n)\in\mathbb{N}^n,\ \sum_{i=1}^n\alpha_i\leq r
\Big\}.
\end{align*}

In the following, we present inverse inequalities on polygons.
\begin{lemma}\label{lem:full_inverse_reference}
Let $\hat{b}>0$ a.e. on $\widehat{T}$ be a weight function. Then
\begin{equation*}
|\hat v|_{1,\widehat T}\leq C_{\rm inv}\|\hat{b}^{1/2}\hat{v}\|_{0,\widehat T},
\qquad \forall\,\hat v\in \widehat{V}_T.
\end{equation*}
where $C_{\rm inv}>0$ depends only on $C_2$, $N$, $\theta^*$, $\theta_*$ and $\hat{b}$.
\end{lemma}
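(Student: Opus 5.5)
The idea is a homogeneity (compactness) argument. The inequality itself is elementary on any finite-dimensional space: both sides are seminorms/norms, and the right-hand side is a norm because $\hat b>0$ a.e. The real content is that $C_{\rm inv}$ does not depend on the particular element $T$, only on $C_2,N,\theta^*,\theta_*$. So I would (i) embed all spaces $\widehat V_T$ into one fixed finite-dimensional space of functions on $\widehat T$, modulo a uniform weight factor, and (ii) run a compactness argument over a compact parameter set of admissible polygons.

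\emph{Step 1: structure of $\widehat V_T$.} By \textbf{H5} it suffices to treat each fixed $n\le N$. For $v=\lambda_i$ we have $\hat v=\hat w_i/\widehat W$. Here $\hat w_i=M_i\prod_{j\ne i-1,i}\hat h_j$, and each $\hat h_j=\sum_k \lambda_{k,\hat T}h_j(\mathbf a_k)$ by \eqref{eq:h_hat}. Hence $\hat w_i$ and $\widehat W$ are polynomials of degree $n-2$ in the reference coordinates $\lambda_{k,\hat T}$, that is, elements of $\widehat U_{n-2}$. For $\Delta\lambda_i$, write $\lambda_i=w_i/W$ and differentiate twice. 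This gives $\Delta\lambda_i=P_i/W^3$, where $P_i$ is a polynomial in $\mathbf x$ built from $w$'s, $W$ and their first and second derivatives. Pulled back, $\hat P_i$ is a polynomial of bounded degree, at most $3(n-2)$, in the $\hat h_j$'s, so it lies in $\widehat U_{3n-6}$. Therefore every $\hat v\in\widehat V_T$ has the form $\hat v=\hat q/\widehat W^3$ with $\hat q\in\widehat U_{r}$, $r=3n-6$, a fixed finite-dimensional space independent of $T$. After scaling by $h_T^{3(n-2)}$, the denominator $\widehat W^3 h_T^{-3(n-2)}$ is bounded above and below uniformly by \eqref{eq:What}, and its gradient is bounded by \eqref{eq:gradWhat}.

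\emph{Step 2: compactness.} Normalize $h_T=1$ by scaling, which does not affect $\widehat V_T$. The admissible vertex configurations $(\mathbf a_1,\dots,\mathbf a_n)$, with a vertex translated to the origin, satisfying \textbf{H1}--\textbf{H4} with the given constants form a compact set $K$ in $\mathbb R^{2n}$. On $K$, the normalized function $\omega=\widehat W/h_T^{n-2}$ depends continuously on the configuration. It is uniformly bounded in $C^1(\widehat T)$ and bounded below by $c_0$.

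Consider the functional
\[
\Phi(\mathbf a,\hat q)=\frac{|\hat q\,\omega^{-3}|_{1,\widehat T}}{\|\hat b^{1/2}\hat q\,\omega^{-3}\|_{0,\widehat T}}
\]
on $K\times(\widehat U_r\setminus\{0\})$. It is homogeneous of degree $0$ in $\hat q$, so I restrict to the unit sphere of $\widehat U_r$. The denominator is positive there because $\hat b>0$ a.e. and a nonzero rational function vanishes only on a null set. Both numerator and denominator are continuous in $(\mathbf a,\hat q)$, so $\Phi$ attains a finite maximum on this compact set. That maximum is $C_{\rm inv}$. Since $\widehat V_T\subset\{\hat q\,\omega^{-3}\}$, the lemma follows.

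\emph{Main obstacle.} The delicate point is Step 1. One must verify carefully that $\Delta\lambda_i$ pulls back into $\widehat U_r/\widehat W^3$, since derivatives are taken in $\mathbf x$, not $\hat{\mathbf x}$. My plan is to exploit the fact that $w_i$ and $W$ are polynomials in $\mathbf x$ of degree $\le n-2$. Their $\mathbf x$-derivatives are therefore again polynomials in $\mathbf x$, i.e.\ products of affine functions, which are expressible through the $h_j$ (these span $\mathbb P_1$). Their pullbacks are thus polynomials in the $\hat h_j$, and hence in the $\lambda_{k,\hat T}$. This sidesteps any use of $J_F^{-1}$.

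A secondary check is continuity of the denominator norm in $\mathbf a$ when $\hat b$ is merely positive a.e. and integrable. This follows by dominated convergence, since $\omega^{-3}$ and $\hat q$ are uniformly bounded.
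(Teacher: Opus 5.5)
Your proposal is correct, and your Step~1 is the paper's first step: every $\hat v\in\widehat V_T$ is written as $\hat p/\widehat W^3$ with $\hat p$ in the $T$-independent space $\widehat U_{3n-6}$. Your remark that an $\mathbf x$-polynomial of degree $d$ pulls back into $\widehat U_d$ (because each component of $F_T$ lies in $\widehat U_1$) is exactly what the paper uses, without saying so, when it asserts $\hat z_i\in\widehat U_{3n-8}$.

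The routes split at Step~2. The paper uses no compactness over element shapes. It applies norm equivalence once on the fixed space, $\|\hat p\|_{1,\widehat T}\le C_{\mathrm{fd}}\|\hat b^{1/2}\hat p\|_{0,\widehat T}$. It then transfers to $\hat v$ directly with Lemma~\ref{lem:What}, via $\|\nabla\hat v\|_{0,\widehat T}\le\bigl(\|\widehat W^{-3}\|_{L^\infty(\widehat T)}+\|\nabla\widehat W^{-3}\|_{L^\infty(\widehat T)}\bigr)\|\hat p\|_{1,\widehat T}$ and $\|\hat b^{1/2}\hat p\|_{0,\widehat T}\le\|\widehat W\|_{L^\infty(\widehat T)}^3\|\hat b^{1/2}\hat v\|_{0,\widehat T}$, and the powers of $h_T$ cancel. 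You already invoke these bounds at the end of Step~1, so this direct estimate is one line away and makes your compactness step redundant.

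What each route buys:
\begin{itemize}
\item The paper's route gives an explicit constant in terms of $n$, $c_0$, $\widehat{C}_\lambda$ and $C_{\mathrm{fd}}$, and never needs to put a topology on the set of admissible polygons.
\item Your route is softer. It needs only positivity of $\omega$ and its continuous dependence on the vertices, not the quantitative bounds on $\widehat W$ and $\nabla\widehat W$.
\end{itemize}

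The price of your route is a nonconstructive constant, plus two verifications that you only assert:
\begin{itemize}
\item $K$ is compact and consists of genuine convex $n$-gons. With $h_T=1$, \textbf{H2} keeps edge lengths $\ge C_2$ and \textbf{H3} keeps angles $\le\theta^*<\pi$, so $M_i$ and $h_j(\mathbf a_k)$ vary continuously.
\item The weighted denominator stays bounded away from zero on $K$ times the unit sphere of $\widehat U_r$. This follows from continuity by dominated convergence if $\hat b\in L^1$, or from lower semicontinuity via Fatou otherwise.
\end{itemize}
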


\begin{proof}
The expression \eqref{eq:w_hat} implies $\hat{w}_i\in\widehat{U}_{n-2}$ and thus $\widehat W\in\widehat{U}_{n-2}$. Recall $\lambda_i=w_i/W$ and thus $\Delta\lambda_i=z_i/W^3$ with 
\[
z_i
=
W^2\Delta w_i-w_iW\Delta W-2W\nabla w_i\cdot\nabla W+2w_i|\nabla W|^2.
\]
Therefore, $\hat{z}_i\in\widehat{U}_{3n-8}$. Any $\hat v\in\widehat{V}_T$ can be written as
\[
\hat v
=
\sum_{i=1}^n \alpha_i\frac{\widehat w_i}{\widehat W}
+\sum_{i=1}^n \beta_i\frac{\hat z_i}{\widehat W^3}
=
\frac{\sum_{i=1}^n \alpha_i\,\hat{w}_i\widehat{W}^2+\sum_{i=1}^n \beta_i\,\hat z_i}{\widehat{W}^3}.
\]
Thus $\hat v=\hat p/\widehat{W}^3$ with $\hat p:=\sum_{i=1}^n \alpha_i\,\hat w_i\widehat W^2+\sum_{i=1}^n \beta_i\,\hat z_i\in\widehat{U}_{3n-6}$. 

By norm equivalence of the finite-dimensional space $\widehat{U}_{3n-6}$, there exists $C_{\mathrm{fd}}>0$, depending only on $n$, such that
\begin{equation}\label{eq:polygon_finite_fanshu}
\|\hat p\|_{1,\widehat T}\le C_{\mathrm{fd}}\|\hat{b}^{1/2}\hat{p}\|_{0,\widehat T},
\qquad \forall\,\hat p\in\widehat{U}_{3n-6}.
\end{equation}
By Lemma \ref{lem:What}, we have $\|\widehat W^{-3}\|_{L^\infty(\widehat T)}\le c_0^{-3}h_T^{-3n+6}$ and  $\|\nabla(\widehat W^{-3})\|_{L^\infty(\widehat T)}
\le
3c_0^{-4}n^3\widehat{C}_\lambda h_T^{-3n+6}$. Combining Lemma \ref{lem:What} with \eqref{eq:polygon_finite_fanshu} yields
\begin{align*}
    \|\nabla\hat v\|_{0,\widehat T}
    =& \|\widehat W^{-3}\nabla\hat p+\hat p\,\nabla(\widehat W^{-3})\|_{0,\widehat T}
    \le
    C_T h_T^{-3n+6}\|\hat{b}^{1/2}\hat p\|_{0,\widehat T} \\
    =& C_T h_T^{-3n+6}\|\widehat W^3\hat{b}^{1/2}\hat v\|_{0,\widehat T}
    \le n^3C_T\|\hat{b}^{1/2}\hat{v}\|_{0,\widehat T}.
\end{align*}
with $C_T:=(c_0^{-3}+3c_0^{-4}n^3\widehat{C}_\lambda)(1+C_{\mathrm{fd}})$. This finishes the proof. \qed
\end{proof}

\begin{theorem}[Inverse Estimate]\label{thm:inverse}
For any $T\in\mathcal T_h$ and $v_h\in V_T$,
\[
|v_h|_{1,T}\lesssim h_T^{-1}\|v_h\|_{0,T}.
\]
\end{theorem}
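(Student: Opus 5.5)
The plan is to transfer the estimate to the reference polygon $\widehat T$ with the Wachspress map $F_T$. There we apply Lemma~\ref{lem:full_inverse_reference} with the trivial weight $\hat b\equiv 1$, and then return to $T$ using the Jacobian bounds of Lemmas~\ref{lem:Jacobian_det} and~\ref{lem:Jacobian_norm}. No polynomial structure of the pulled-back space is needed; all nonlinearity of $F_T$ has already been absorbed into Lemma~\ref{lem:full_inverse_reference}.

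Fix $v_h\in V_T$ and set $\hat v=v_h\circ F_T\in\widehat V_T$. By the chain rule, $\nabla v_h(F_T(\hat{\mathbf x}))=J_F^{-\top}(\hat{\mathbf x})\nabla\hat v(\hat{\mathbf x})$. Changing variables, which is legitimate because $F_T$ is a smooth bijection with positive Jacobian by Lemma~\ref{lem_polymap_inject} and the surjectivity lemma, gives
\begin{align*}
|v_h|_{1,T}^2&=\int_{\widehat T}|J_F^{-\top}\nabla\hat v|^2\det(J_F)\,d\hat{\mathbf x}\le \sup_{\widehat T}\|J_F^{-1}\|_2^2\,\sup_{\widehat T}\det(J_F)\,|\hat v|_{1,\widehat T}^2\lesssim h_T^{-2}h_T^2|\hat v|_{1,\widehat T}^2,
\end{align*}
so $|v_h|_{1,T}\lesssim|\hat v|_{1,\widehat T}$. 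In the same way,
\[
\|v_h\|_{0,T}^2=\int_{\widehat T}\hat v^2\det(J_F)\,d\hat{\mathbf x}\gtrsim h_T^2\|\hat v\|_{0,\widehat T}^2 ,
\]
using the lower bound $\det(J_F)\gtrsim h_T^2$ from Lemma~\ref{lem:Jacobian_det}.

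Lemma~\ref{lem:full_inverse_reference} with $\hat b\equiv1$ gives $|\hat v|_{1,\widehat T}\le C_{\rm inv}\|\hat v\|_{0,\widehat T}$. Chaining the three bounds yields
\[
|v_h|_{1,T}\lesssim |\hat v|_{1,\widehat T}\le C_{\rm inv}\|\hat v\|_{0,\widehat T}\lesssim h_T^{-1}\|v_h\|_{0,T}.
\]

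The constants must depend only on \textbf{H1}--\textbf{H6}. The reference polygon $\widehat T$ depends on $n=n(T)$, but \textbf{H5} gives $n\le N$, so only finitely many reference polygons occur. Taking the maximum of $C_{\rm inv}$ and of the Jacobian constants over $n\le N$ therefore gives a uniform constant.

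I expect the main obstacle to be already resolved by Lemma~\ref{lem:full_inverse_reference}: showing that $C_{\rm inv}$ is independent of $T$, even though $\widehat V_T$ itself varies with $T$. The remaining point to check is that the Jacobian estimates hold uniformly over all of $\widehat T$, including near the vertices. This is guaranteed by Lemma~\ref{lem:Jacobian_det} being pointwise on $\widehat T$.
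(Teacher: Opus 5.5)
Your proposal is correct and follows the paper's own route: pull back to $\widehat T$ with $F_T$, apply Lemma~\ref{lem:full_inverse_reference} (with $\hat b\equiv 1$), and return to $T$ using the bounds $\det(J_F)\eqsim h_T^2$ and $\|J_F^{-1}\|_2\lesssim h_T^{-1}$ from Lemmas~\ref{lem:Jacobian_det} and~\ref{lem:Jacobian_norm}. Your change-of-variables computations simply spell out what the paper's one-line proof leaves implicit, and your remark that \textbf{H5} makes the constants uniform over $n\le N$ is the right justification.
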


\begin{proof}
The proof directly follows from Lemmas \ref{lem:Jacobian_norm}, \ref{lem:Jacobian_det} and \ref{lem:full_inverse_reference}. \qed
\end{proof}

For an interior edge $e\in\mathcal E_h$, let $T^+$ and $T^-$ be the two convex polygonal elements sharing $e$. We use a superscript $+$ or $-$ to indicate a quantity is defined either on $T^+$ or $T^-$. For example, $F^\pm:\widehat T\to T^\pm$ is the homogeneity mapping from $\widehat{T}$ to $T^\pm$, and $w_i^{\pm}$ is the $i$-th Wachspress GBC on $T^\pm$.

The edge $e$ is connected to the fixed interval $\hat e=[0,1]$ via 
\[
F_e:\hat e\to e,\qquad F_e(t)=(1-t)\mathbf a+t\mathbf b,
\]
where $\mathbf a,\mathbf b$ are the endpoints of $e$. For a function $g$ defined on $e$, its pullback to $\hat{e}$ is $\hat g=g\circ F_e$. We define space
\[
V_e=\mathbf n_e\cdot {\rm span}\big\{\nabla\lambda_{1,T^+},\ldots,\nabla\lambda_{n^+,T^+},\nabla\lambda_{1,T^-},\ldots,\nabla\lambda_{n^-,T^-}\big\}\big|_e.
\]

\begin{lemma}\label{lem:ref_edge_inverse}
Let $\hat{b}>0$ a.e. on $e$ be a weight. 
For any $v\in V_e$, it holds that
\begin{equation*}
        \|\hat v\|_{1,\hat e}+\|\hat v\|_{\infty,\hat e}
        \leq C_{\rm inv,e}\|\hat b^{1/2}\hat v\|_{0,\hat e}
\end{equation*}
where $C_{\rm inv,e}>0$ depend only on $C_2$, $N$, $\theta^*$, $\theta_*$. 
\end{lemma}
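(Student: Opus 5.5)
Following Lemma \ref{lem:full_inverse_reference}, the plan is to show that every $\hat v$ with $v\in V_e$ is a fixed rational function on $\hat e$: a polynomial of bounded degree divided by a denominator bounded uniformly away from zero. Norm equivalence on a finite-dimensional polynomial space then gives the estimate.

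\textbf{Step 1 (algebraic structure on $e$).} Since $\lambda_{i,T^\pm}=w_i^\pm/W^\pm$, we have
\[
\nabla\lambda_{i,T^\pm}=\frac{W^\pm\nabla w_i^\pm-w_i^\pm\nabla W^\pm}{(W^\pm)^2},
\]
where $w_i^\pm$ and $W^\pm$ are polynomials of degree at most $n^\pm-2$. Composing with the affine map $F_e$, the function $\hat v$ for $v\in V_e$ has the form
\[
\hat v=\frac{\hat q^+}{(\widehat W^+_e)^2}+\frac{\hat q^-}{(\widehat W^-_e)^2}
=\frac{\hat q^+(\widehat W^-_e)^2+\hat q^-(\widehat W^+_e)^2}{(\widehat W^+_e\widehat W^-_e)^2}.
\]
Here $\widehat W^\pm_e=W^\pm\circ F_e$, and $\hat q^\pm$ are univariate polynomials on $[0,1]$ of degree at most $2n^\pm-5$. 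Hence $\hat v=\hat p/\hat D$ with $\hat p\in\mathbb P_K(\hat e)$ and $K\le 4N$, which depends only on \textbf{H5}. Because $\hat D$ is not fixed, I normalize $W^\pm$ by $h_{T^\pm}^{n^\pm-2}$, which is harmless since this only rescales the coefficients.

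\textbf{Step 2 (bounds on the denominator).} I need $\widehat W^\pm_e\eqsim h_{T^\pm}^{n^\pm-2}$ on $\hat e$, together with a derivative bound $|(\widehat W^\pm_e)'|\lesssim h_{T^\pm}^{n^\pm-2}$.
\begin{itemize}
\item The upper bound and the derivative bound follow from \eqref{eq:h_j_bound}, since each $h_j\le h_T$, and from $|\nabla h_j|=1$ times $|e|\le h_T$.
\item For the lower bound, restrict to $e=e_i$. There $W$ reduces to $w_i+w_{i+1}$, because all other $w_k$ contain the factor $h_i$, which vanishes on $e_i$. Both $w_i$ and $w_{i+1}$ are products of $h_j$ with $j\ne i$. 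By \textbf{H2} each such $h_j$ is at least of order $h_T$ at the endpoints of $e_i$, except $h_{i-1}$ near $\mathbf a_i$ and $h_{i+1}$ near $\mathbf a_{i+1}$. I therefore argue as in Lemma \ref{lem:What}: $w_i$ omits $h_{i-1}$ and $w_{i+1}$ omits $h_{i+1}$, so on $e_i$ we get $w_i+w_{i+1}\gtrsim c_M(C_2h_T)^{n-2}(\mu^{n-2}+(1-\mu)^{n-2})$, with $\mu$ the linear coordinate along $e_i$, using linearity of the $h_j$ on the edge. Jensen's inequality then gives the uniform lower bound.
\end{itemize}

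\textbf{Step 3 (conclusion).} Set $\hat D=(\widehat W^+_e\widehat W^-_e)^2$, normalized. Then $\hat D\in[c,C]$ and $|\hat D'|\le C$ on $\hat e$. On $\mathbb P_K(\hat e)$, all norms are equivalent to the weighted norm $\|\hat b^{1/2}\cdot\|_{0,\hat e}$, since $\hat b>0$ a.e. makes this a norm. This gives
\[
\|\hat p\|_{1,\hat e}+\|\hat p\|_{\infty,\hat e}\le C\|\hat b^{1/2}\hat p\|_{0,\hat e}.
\]
Writing $\hat v=\hat p/\hat D$, using $\hat v'=\hat p'/\hat D-\hat p\hat D'/\hat D^2$, and using $\|\hat b^{1/2}\hat p\|\le C\|\hat b^{1/2}\hat v\|$, the claimed bound follows, exactly as in the proof of Lemma \ref{lem:full_inverse_reference}.

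\textbf{Main obstacle.} The delicate part is the uniform lower bound on $W^\pm$ along the edge near its endpoints, where two distance functions degenerate. I will handle it by the edge restriction $W|_{e_i}=w_i+w_{i+1}$ described in Step 2. I also note that the constant depends on $\hat b$, as in Lemma \ref{lem:full_inverse_reference}.
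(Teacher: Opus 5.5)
Your proposal is correct and follows essentially the same route as the paper. Both write $\hat v=\hat p_e/\bigl((\widehat W^+)^2(\widehat W^-)^2\bigr)$ with $\hat p_e$ a polynomial of degree at most $4N-9$, use norm equivalence on $\mathbb P_{4N-9}(\hat e)$ with the weighted norm, and bound the denominator and its derivative from above and below. The only differences are minor: you derive the lower bound on $W^\pm$ along $e$ directly from $W|_{e_i}=w_i+w_{i+1}$ and the linearity of the $h_j$ on the edge, whereas the paper invokes Lemma~\ref{lem:What} on $T^\pm$. You also normalize by $h_{T^\pm}^{n^\pm-2}$ instead of $h_e^{n^\pm-2}$, and you correctly note that the constant must also depend on $\hat b$, which the statement omits.
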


\begin{proof}
Using $\mathbf n_e\cdot\nabla\lambda_i^\pm=\mathbf n_e\cdot(W^\pm\nabla w_i^\pm-w_i^\pm\nabla W^\pm)/(W^\pm)^2:=r_i^\pm/(W^\pm)^2$, we can write $v\in V_e$ as
\begin{equation}\label{eq:edge_v_pullback_rep}
\begin{aligned}
    \hat v
&= v\circ F_e=
\sum_{i=1}^{n_+}\alpha_i^+\frac{\hat r_i^+}{(\widehat W^+)^2}
+
\sum_{i=1}^{n_-}\alpha_i^-\frac{\hat r_i^-}{(\widehat W^-)^2}\\
&=
\frac{
(\widehat W^-)^2\sum_{i=1}^{n_+}\alpha_i^+\widehat r_i^+
+
(\widehat W^+)^2\sum_{i=1}^{n_-}\alpha_i^-\hat r_i^-
}{
(\widehat W^+)^2(\widehat W^-)^2
}\\
&=\frac{\hat p_e}{(\widehat W^+)^2(\widehat W^-)^2}.
\end{aligned}
\end{equation}
Since $F_e$ is affine, each $h_j^\pm\circ F_e$ is a linear polynomial on $\hat e$. Therefore, $\widehat W^\pm\in \mathbb P_{n_\pm-2}(\hat e)$, $\widehat r_i^\pm\in \mathbb P_{2n_\pm-5}(\hat e)$, and $\hat{p}_e\in \mathbb P_{4N-9}(\hat e)$.

By norm equivalence on $\mathbb{P}_{4N-9}(\hat e)$,  
\begin{equation}\label{eq:phat}
\|\hat p_e\|_{1,\hat e}+\|\hat p_e\|_{\infty,\hat e}\leq \widehat C\|\hat b^{1/2}\hat p_e\|_{0,\hat e},
\end{equation}
where $\widehat C>0$ depends only on $N$.
Using Lemma \ref{lem:What} on $T^\pm$, we have
\begin{equation}
        \label{eq:edge_W_bounds}
    c_{\pm}h_e^{\,n_\pm-2}
        \leq \widehat W^\pm(t)
        \leq C_{\pm}h_e^{\,n_\pm-2},
        \qquad \forall t\in\hat{e}.
\end{equation}
It follows from \eqref{eq:edge_v_pullback_rep}, \eqref{eq:phat} and \eqref{eq:edge_W_bounds} that
\begin{align*}
    \|\hat v\|_{0,\hat e}
    \le& c_+^{-2}c_-^{-2}\|\hat p_e\|_{0, \hat e}
    \le c_{+}^{-2}c_{-}^{-2}\widehat C\|\hat{b}^{1/2}\hat p_e\|_{0,\hat e}
    \\
    \leq& c_{+}^{-2}c_{-}^{-2}C_{+}^2C_{-}^2\widehat C\|\hat{b}^{1/2}\hat v\|_{0,\hat e}. 
\end{align*}
Similary, it follows from \eqref{eq:edge_W_bounds} and \eqref{eq:phat} that
\begin{align*}
    \|\hat v\|_{L^\infty(\hat e)}
    \le& 
    (c_{+}c_{-})^{-2}h_e^{-2n_+-2n_-+8}\|\hat p_e\|_{L^\infty(\hat e)}\\
    \le&
    (c_{+}c_{-})^{-2}\widehat C(C_+C_-)^2\|\hat{b}^{1/2}\hat{v}_e\|_{0,\hat e}.
\end{align*}
As in the element case, with $C_W:=n_+^3\widehat{C}_\lambda C_2^{2-n_{+}} C_{-} + n_-^3C_{+}\widehat{C}_\lambda C_2^{2-n_{-}}$, Lemma \ref{lem:What} yields
\begin{equation}\label{eq:Theta_grad_bound}
    \begin{aligned}
    \|(\widehat{W}^\pm)^\prime\|_{L^\infty(\hat e)}\le& n_\pm^3 \widehat{C}_\lambda C_2^{2-n_{\pm}}h_e^{\,n_\pm-2},\\
    \|(\widehat W^+\widehat W^-)^\prime\|_{L^\infty(\hat e)}\le& C_Wh_e^{\,n_{+}+n_{-}-4}.
    \end{aligned}
\end{equation}
It follows from $((\widehat W^+\widehat W^-)^{-2})^\prime=-2(\widehat{W}^+\widehat{W}^-)^{-3}(\widehat{W}^+\widehat{W}^-)^\prime$, and \eqref{eq:edge_W_bounds}, \eqref{eq:phat}, \eqref{eq:Theta_grad_bound} that
\begin{align*}
    \|\hat{v}^\prime\|_{0,\hat e} 
    =& \| (\widehat{W}^+\widehat{W}^-)^{-2}\hat{p}^\prime_e
     + ((\widehat W^+\widehat W^-)^{-2})^\prime\hat{p}_e\|_{0, \hat e}\\
    \leq&
    \bigl((c_{+}c_{-})^{-2} +2(c_{+}c_{-})^{-3}C_W\bigr)\widehat{C} h_e^{-2n_+-2n_-+8}\|\hat{b}^{1/2}\hat{p}_e\|_{0,\hat e}\\
    \leq& \bigl((c_{+}c_{-})^{-2}  +2(c_{+}c_{-})^{-3}C_W\bigr)\widehat{C} C_{+}^2C_{-}^2\|\hat{b}^{1/2}\hat{v}\|_{0,\hat e}.
\end{align*}
The proof is complete. \qed
\end{proof}

Inverse inequalities on a physical edge $e$ follow immediately.

\begin{theorem}
Let $\mathbf t_e$ be the unit tangent to $e$. For any $v\in V_e$, it holds that
\begin{subequations}
    \begin{align}
    \left\|\partial_{\mathbf t_e}v\right\|_{0,e}&\lesssim h_e^{-1}\|v\|_{0,e},\label{eq:vprime_edgeinverse}\\
\|v\|_{L^\infty(e)}&\lesssim h_e^{-1/2}\|v\|_{0,e}.\label{eq:v_edgeinverse}
\end{align}
\end{subequations}
\end{theorem}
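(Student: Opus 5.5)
The plan is a plain scaling argument through the affine map $F_e(t)=(1-t)\mathbf a+t\mathbf b$, feeding in Lemma~\ref{lem:ref_edge_inverse} with the trivial weight $\hat b\equiv 1$. Since $F_e$ is affine with constant derivative $F_e'(t)=\mathbf b-\mathbf a$ and $|\mathbf b-\mathbf a|=h_e$, no nonlinear Jacobian issue arises, in contrast with the element case. For $v\in V_e$ with pullback $\hat v=v\circ F_e$ we have:
\begin{itemize}
\item the change of variables $\|v\|_{0,e}^2=h_e\|\hat v\|_{0,\hat e}^2$;
\item the chain rule $\hat v'(t)=h_e\,(\partial_{\mathbf t_e}v)(F_e(t))$, with $\mathbf t_e=(\mathbf b-\mathbf a)/h_e$;
\item $\|v\|_{L^\infty(e)}=\|\hat v\|_{L^\infty(\hat e)}$.
\end{itemize}

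For \eqref{eq:vprime_edgeinverse}, the key step is
\[
\|\partial_{\mathbf t_e}v\|_{0,e}^2=h_e\int_0^1|(\partial_{\mathbf t_e}v)\circ F_e|^2\,dt=h_e^{-1}\|\hat v'\|_{0,\hat e}^2 .
\]
Lemma~\ref{lem:ref_edge_inverse} with $\hat b\equiv1$ gives $\|\hat v'\|_{0,\hat e}\le C_{\rm inv,e}\|\hat v\|_{0,\hat e}=C_{\rm inv,e}h_e^{-1/2}\|v\|_{0,e}$. Hence $\|\partial_{\mathbf t_e}v\|_{0,e}\le C_{\rm inv,e}h_e^{-1}\|v\|_{0,e}$.

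For \eqref{eq:v_edgeinverse}, Lemma~\ref{lem:ref_edge_inverse} likewise gives $\|\hat v\|_{L^\infty(\hat e)}\le C_{\rm inv,e}\|\hat v\|_{0,\hat e}=C_{\rm inv,e}h_e^{-1/2}\|v\|_{0,e}$. Combining this with $\|v\|_{L^\infty(e)}=\|\hat v\|_{L^\infty(\hat e)}$ yields the claim.

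The only point requiring care is that the constant $C_{\rm inv,e}$ must be independent of $e$ and $h$. This is where the homogeneity built into the proof of Lemma~\ref{lem:ref_edge_inverse} is essential. In that proof the powers of $h_e$ coming from $\widehat W^\pm$ cancel exactly between $\hat p_e$ and $(\widehat W^+\widehat W^-)^2$; this uses $h_e\eqsim h_{T^\pm}$, which follows from \textbf{H2}. The remaining finite-dimensional constant depends only on $N$, since $\hat p_e\in\mathbb P_{4N-9}(\hat e)$ lies in a fixed space. With $\hat b\equiv1$ no additional weight dependence enters. Granting Lemma~\ref{lem:ref_edge_inverse} as stated, the theorem thus reduces to the two scaling identities above, and no step poses a real obstacle.
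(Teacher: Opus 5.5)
Your proof is correct and is the same as the paper's: both pull back through the affine map $F_e$ with $|F_e'|=h_e$, apply Lemma~\ref{lem:ref_edge_inverse} on $\hat e$, and scale back. This gives $\|\partial_{\mathbf t_e}v\|_{0,e}=h_e^{-1/2}\|\hat v'\|_{0,\hat e}\lesssim h_e^{-1}\|v\|_{0,e}$ and $\|v\|_{L^\infty(e)}=\|\hat v\|_{L^\infty(\hat e)}\lesssim h_e^{-1/2}\|v\|_{0,e}$; your remark that the uniformity of $C_{\rm inv,e}$ rests on $h_e\eqsim h_{T^\pm}$ (a consequence of \textbf{H2}) is a correct detail that the paper leaves implicit.
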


\begin{proof}
By Lemma \ref{lem:ref_edge_inverse} and $|F_e^\prime|\eqsim h_e$, we have
\[
\|\partial_{\mathbf{t}_e}v\|_{0,e}
\eqsim h_e^{-1/2}\|\hat{v}^\prime\|_{0,\hat e}
\lesssim h_e^{-1/2}\|\hat v\|_{0,\hat e}
\eqsim h_e^{-1}\|v\|_{0,e}.
\]
For the same reason, we obtain
\[
\|v\|_{L^\infty(e)}
\eqsim \|\hat v\|_{L^\infty(\hat e)}
\lesssim \|\hat v\|_{0,\hat e}
\eqsim h_e^{-1/2}\|v\|_{0,e}.
\]
The proof is complete. \qed
\end{proof}

Next, we derive a lower bound for the residual estimator. As in the triangular case, the proof is based on bubble functions. For $T\in\mathcal{T}_h$, let
\[
b_T=
\begin{cases}
27\lambda_{1,T}\lambda_{2,T}\lambda_{3,T},& \text{in }T,\\
0,& \text{in }\Omega\setminus T.
\end{cases}
\]
Let $e=\partial T^+\cap \partial T^-$ be an interior edge with endpoints 
$\mathbf a$ and $\mathbf b$. We consider the edge bubble function
\[
b_e=4\lambda_{\mathbf a}\lambda_{\mathbf b}.
\]
Then $0\le b_T\leq 1$, $b_T|_{\partial T}=0$ and $0\leq b_e\leq 1$, ${\rm supp}(b_e)=\Omega_e:=T^+\cup T^-$. 
\begin{lemma}\label{lem_T_bubble}
For $T\in\mathcal T_h$, $e\in\mathcal E_h$ and $v\in V_T$, $w\in V_e$, it holds that
\begin{subequations}
    \begin{align}
        &\|v\|_{0,T}\lesssim \|b_T^{1/2}v\|_{0,T}\leq \|v\|_{0,T},\label{eq_elem_bubble}\\
        &\|w\|_{0,e}\lesssim \|b_e^{1/2}w\|_{0,e}\le \|w\|_{0,e},\label{eq_edge_bubble}\\
&|b_Tv|_{1,T}\lesssim h_T^{-1}\|v\|_{0,T}.\label{eq_elem_bubble_grad}
    \end{align}
\end{subequations}
\end{lemma}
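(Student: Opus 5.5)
The plan is to reduce everything to the reference configurations and invoke the weighted inverse inequalities already established, Lemma \ref{lem:full_inverse_reference} for elements and Lemma \ref{lem:ref_edge_inverse} for edges, with suitable weights $\hat b$. The right-hand inequalities in \eqref{eq_elem_bubble} and \eqref{eq_edge_bubble} are immediate from $0\le b_T\le 1$ and $0\le b_e\le 1$.

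For the left inequality in \eqref{eq_elem_bubble}, pull back to $\widehat T$ via $F_T$. By Lemma \ref{lem:Jacobian_det}, $\|v\|_{0,T}^2\eqsim h_T^2\|\hat v\|_{0,\widehat T}^2$ and $\|b_T^{1/2}v\|_{0,T}^2\eqsim h_T^2\|\hat b_T^{1/2}\hat v\|_{0,\widehat T}^2$, where $\hat b_T=b_T\circ F_T=27\hat w_1\hat w_2\hat w_3/\widehat W^3$. By Lemma \ref{lem:What}, $\hat b_T\gtrsim \hat{\mathfrak b}:=\prod_{i=1}^3\prod_{j\neq i-1,i}\hat h_j/h_T$, where the $h_T$-powers cancel after rescaling. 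Each $\hat h_j/h_T=\sum_k\lambda_{k,\hat T}h_j(\mathbf a_k)/h_T$ is bounded below by $C_2\sum_{k\neq j,j+1}\lambda_{k,\hat T}$, which is a fixed function on $\widehat T$ that is independent of $T$ and positive except on $\hat e_j$. So I would take the $T$-independent weight $\hat b:=\prod_{i=1}^3\prod_{j\neq i-1,i}\sum_{k\neq j,j+1}\lambda_{k,\hat T}$, which is positive a.e., and apply the norm equivalence in the proof of Lemma \ref{lem:full_inverse_reference}. Concretely, write $\hat v=\hat p/\widehat W^3$ with $\hat p\in\widehat U_{3n-6}$ and use $\|\hat p\|_{0}\le C\|\hat b^{1/2}\hat p\|_0$ on this finite-dimensional space, which depends only on $n\le N$. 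The bounds on $\widehat W$ from Lemma \ref{lem:What} then transfer this to $\hat v$.

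The main obstacle is uniformity. The space $\widehat U_{3n-6}$ is fixed, but the constant must not depend on the vertices of $T$. Using the $T$-independent lower weight $\hat b$ resolves this, since $\hat p$ ranges in a $T$-independent space.

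For \eqref{eq_edge_bubble}, the same approach applies on $\hat e$. We have $\hat b_e=4\lambda_{\mathbf a}\lambda_{\mathbf b}\circ F_e=4t(1-t)$, because the Wachspress coordinates restrict to linear Lagrange functions on edges. This weight is independent of $T$, so Lemma \ref{lem:ref_edge_inverse} together with the scaling $\|w\|_{0,e}^2=h_e\|\hat w\|_{0,\hat e}^2$ gives the claim.

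For \eqref{eq_elem_bubble_grad}, write $\nabla(b_Tv)=v\nabla b_T+b_T\nabla v$. By Lemma \ref{lem:wach_grad_bound}, $|\nabla b_T|\lesssim h_T^{-1}$, and $b_T\le1$. Therefore $|b_Tv|_{1,T}\lesssim h_T^{-1}\|v\|_{0,T}+|v|_{1,T}$, and Theorem \ref{thm:inverse} bounds $|v|_{1,T}\lesssim h_T^{-1}\|v\|_{0,T}$.
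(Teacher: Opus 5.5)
Your proposal is correct and follows the paper's proof essentially step for step. You pull back to $\widehat T$ and $\hat e$ and bound $\hat b_T$ below by a $T$-independent weight using Lemma~\ref{lem:What}. You then use the weighted norm equivalence on the fixed spaces $\widehat U_{3n-6}$ and $\mathbb P_{4N-9}(\hat e)$, the latter with $\hat b_e=4t(1-t)$. Finally, you obtain \eqref{eq_elem_bubble_grad} from the product rule, Lemma~\ref{lem:wach_grad_bound} and Theorem~\ref{thm:inverse}.

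There are two minor differences. The paper uses the weight $(\lambda_{1,\hat T}\lambda_{2,\hat T}\lambda_{3,\hat T})^{n-2}$, read off directly from \eqref{eq:w_ihat}, rather than your product of factors $1-\lambda_{j,\hat T}-\lambda_{j+1,\hat T}$; either weight works. The paper also cites Lemma~\ref{lem:full_inverse_reference} directly, although that lemma is stated only for the $H^1$-seminorm. Your explicit use of the $L^2$ part of its proof is therefore slightly more careful than the paper.
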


\begin{proof}
The second inequality in both \eqref{eq_elem_bubble} and \eqref{eq_edge_bubble} follow immediately from $0\le b_T\leq 1$ and $0\le b_e\leq 1$. 
Using Lemmas \ref{lem:Jacobian_det}, \ref{lem:full_inverse_reference} and \ref{lem:What}, we obtain
\begin{align*}
    \|v\|_{0,T}\eqsim& h_T\|\hat v\|_{0,\widehat T}
    \lesssim h_T\|(\lambda_{1,\hat{T}}\lambda_{2,\hat{T}}\lambda_{3,\hat{T}})^{(n-2)/2}\hat v\|_{0,\widehat T}\\
    \lesssim& h_T\|(\hat w_1\hat w_2\hat w_3/\widehat W^3)^{1/2}\hat v\|_{0,\widehat T}
    = h_T\|\hat b_T^{1/2}\hat v\|_{0,\widehat T}
    \eqsim \|b_T^{1/2}v\|_{0,T}.
\end{align*}
Similarly, by Lemma \ref{lem:ref_edge_inverse}, we obtain
\[
\|w\|_{0,e}\eqsim h_e^{1/2}\|\hat w\|_{0,\hat e}
\lesssim h_e^{1/2}\|\hat b_e^{1/2}\hat w\|_{0,\hat e}
\eqsim \|b_e^{1/2}w\|_{0,e}.
\]
Finally, \eqref{eq_elem_bubble_grad} follows from Lemmas \ref{lem:wach_grad_bound} and \ref{thm:inverse}. \qed
\end{proof}

We now prove the efficiency of the residual estimator stated in Theorem~\ref{thm:efficiency}. 
\begin{proof} 
By Lemma \ref{lem_T_bubble}, integration by parts and $f=-\Delta u$, we have
\begin{equation}\label{eq:data_oscillation}
\begin{aligned}
&\|f_T+\Delta u_h\|_{0,T}^2\lesssim \|b_T^{1/2}(f_T+\Delta u_h)\|_{0,T}^2\\
&=
-\int_T \Delta(u-u_h)(f_T+\Delta u_h)b_T-\int_T (f-f_T)(f_T+\Delta u_h)b_T\\
&=
-\int_T \nabla(u-u_h)\cdot \nabla\bigl((f_T+\Delta u_h)b_T\bigr)-\int_T (f-f_T)(f_T+\Delta u_h)b_T.
\end{aligned}
\end{equation}
Using Lemma \ref{lem_T_bubble}, we have
\begin{equation*}
|(f_T+\Delta u_h)b_T|_{1,T}\lesssim
h_T^{-1}\|f_T+\Delta u_h\|_{0,T}.
\end{equation*}
Substituting this into \eqref{eq:data_oscillation} yields
\begin{equation*}
\|f_T+\Delta u_h\|_{0,T}
\lesssim
h_T^{-1}|u-u_h|_{1,T}+\|f-f_T\|_{0,T}.
\end{equation*}
Employing a triangle inequality then yields
\begin{equation}\label{eq:res_volume}
\|f+\Delta u_h\|_{0,T}
\lesssim
h_T^{-1}|u-u_h|_{1,T}+\|f-f_T\|_{0,T}.
\end{equation}

For $e=[\mathbf{a},\mathbf{b}]\in\mathcal{E}_h$, we need to estimate $\|\llbracket\partial_{\mathbf{n}_e} u_h\rrbracket\|_{0,e}$. Along the direction of $e$,
let $q|_e:=\llbracket\partial_{\mathbf{n}_e} u_h\rrbracket$ and $q|_{e^c}$ outside $e$ be given by the endpoint values $q(\mathbf{a})$, $q(\mathbf{b})$. We then constantly extend $q$ into $\Omega_e$ along the normal vector $\mathbf{n}_e$ to obtain $\tilde{q}$. By the construction of $\tilde{q}$ and \eqref{eq:v_edgeinverse},
\begin{equation}\label{eq:qtilde}
    \|\tilde q\|_{0,\Omega_e}^2
\lesssim h_e\|q\|_{0,e}^2
+
h_e^2\|q\|_{L^\infty(e)}^2
\lesssim
h_e\|q\|_{0,e}^2.
\end{equation}
It follows from constant normal extension and \eqref{eq:vprime_edgeinverse} that
\begin{equation}\label{eq:grad_qtilde}
\|\nabla\tilde{q}\|_{0,\Omega_e}^2
= \left\|\partial_{\mathbf{t}_e}\tilde{q}\right\|_{0,\Omega_e}^2
\lesssim h_e\|\partial_{\mathbf{t}_e}q\|_{0,e}^2\lesssim h_e^{-1}\|q\|_{0,e}^2.
\end{equation}

By Lemma \ref{lem_T_bubble}, Lemma \ref{lem:wach_grad_bound} and \eqref{eq:qtilde}, \eqref{eq:grad_qtilde}, we have
\begin{equation*}
\begin{aligned}
\|q\|_{0,e}^2&\lesssim\|b_e^{1/2}q\|_{0,e}^2
=
\sum_{T\in\Omega_e}\int_T \nabla(u-u_h)\cdot \nabla(b_e\tilde q)
-
\sum_{T\in\Omega_e}\int_T (f+\Delta u_h)b_e\tilde{q}\\
&\leq|u-u_h|_{1,\Omega_e}\|\nabla(b_e\tilde{q})\|_{0,\Omega_e}+\|f+\Delta u_h\|_{0,\Omega_e}\|b_e\tilde{q}\|_{0,\Omega_e}\\
&\leq h_e^{-1/2}|u-u_h|_{1,\Omega_e}\|q\|_{0,e}+h_e^{1/2}\|f+\Delta u_h\|_{0,\Omega_e}\|q\|_{0,e}.
\end{aligned}
\end{equation*}
In what follows,
\begin{equation}\label{eq:res_edge}
h_e^{1/2}\|q\|_{0,e}\lesssim |u-u_h|_{1,\Omega_e}+h_e\|f+\Delta u_h\|_{0,\Omega_e}.
\end{equation}
Combining \eqref{eq:res_volume} with \eqref{eq:res_edge} proves Theorem \ref{thm:efficiency}. \qed
\end{proof}

\section{Numerical Experiments}\label{sect:numeric}

In this section, we present three representative examples to assess the proposed adaptive method on polygonal meshes. We first briefly describe the polytree-type local refinement procedure used in the computation, including the treatment of hanging nodes under the constraint that each edge contains at most one hanging node. We then compare adaptive and uniform refinements in terms of mesh distribution, numerical accuracy, and convergence behavior. For each example, $\mathrm{Dof}$ denotes the number of degrees of freedom.

\subsection{A polytree-type mesh refinement strategy}

We employ a polytree-type local refinement strategy for polygonal meshes. For a marked $n$-gon $T$ with vertices $\mathbf a_1,\dots,\mathbf a_n$ ordered counterclockwise, let $\mathbf m_i$ be the midpoint of the edge $[\mathbf a_i,\mathbf a_{i+1}]$. Define the cell center by
\[
\mathbf x_c=\frac1n\sum_{i=1}^n \mathbf a_i,
\]
and let $\mathbf x_i$ be the midpoint of the segment $[\mathbf x_c,\mathbf m_i]$. Connecting $\mathbf x_1,\dots,\mathbf x_n$ produces an interior $n$-gon, and connecting each $\mathbf x_i$ to $\mathbf m_i$ subdivides $T$ into $n$ outer pentagons and one inner $n$-gon. Hence one refinement of an $n$-gon produces $n+1$ polygonal sub-elements.

Hanging nodes are unavoidable under local mesh refinement. In our implementation, hanging nodes are treated only as auxiliary topological points and are not taken as independent degrees of freedom. Their nodal values are determined by interpolation from the two endpoints of the containing edge. This avoids nonconvex coarse cells and preserves the validity of Wachspress coordinates. Moreover, we enforce that each edge contains at most one hanging node; if an endpoint of an edge is already hanging, the adjacent coarse element is refined accordingly. Our a posteriori error analysis with minor modification extends to such nonconforming polygonal meshes. 

The elements to be refined are selected by the D\"orfler marking strategy. More precisely, for a given marking parameter $\theta\in(0,1)$, we choose a minimal subset $\mathcal M_h\subset \mathcal T_h$ such that
\[
\sum_{T\in\mathcal M_h}\eta_T^2
\geq
\theta \sum_{T\in\mathcal T_h}\eta_T^2 .
\]
The elements in $\mathcal M_h$ are then refined by the above polytree-type subdivision. In all adaptive computations reported below, we take $\theta=0.6$.

\subsection{Example 1: a localized peak on the unit square}

Let $\Omega=(0,1)^2$ and take
\[
u(x,y)=10e^{-\beta((x-x_0)^2+(y-y_0)^2)},
\qquad (x_0,y_0)=(0.5,0.5),
\]
with $\beta=10$. The right-hand side is given by $-\Delta u=f$, and the boundary data are prescribed by the exact solution.

\begin{figure}[htbp]
  \centering
  \begin{minipage}{0.32\textwidth}
    \centering
    \includegraphics[width=\linewidth]{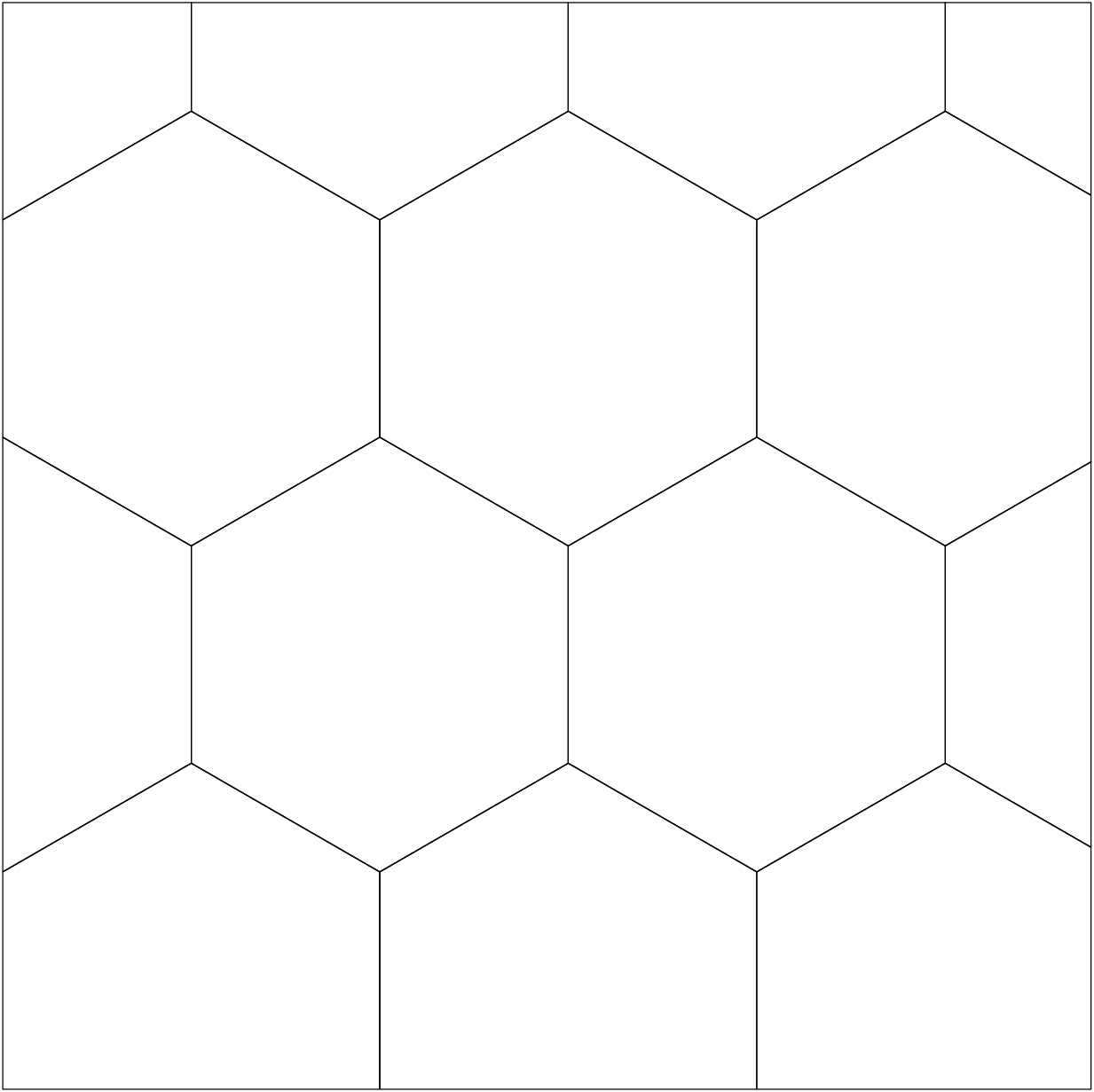}
    
    \small (a) Initial mesh.
  \end{minipage}
  \hfill
  \begin{minipage}{0.32\textwidth}
    \centering
    \includegraphics[width=\linewidth]{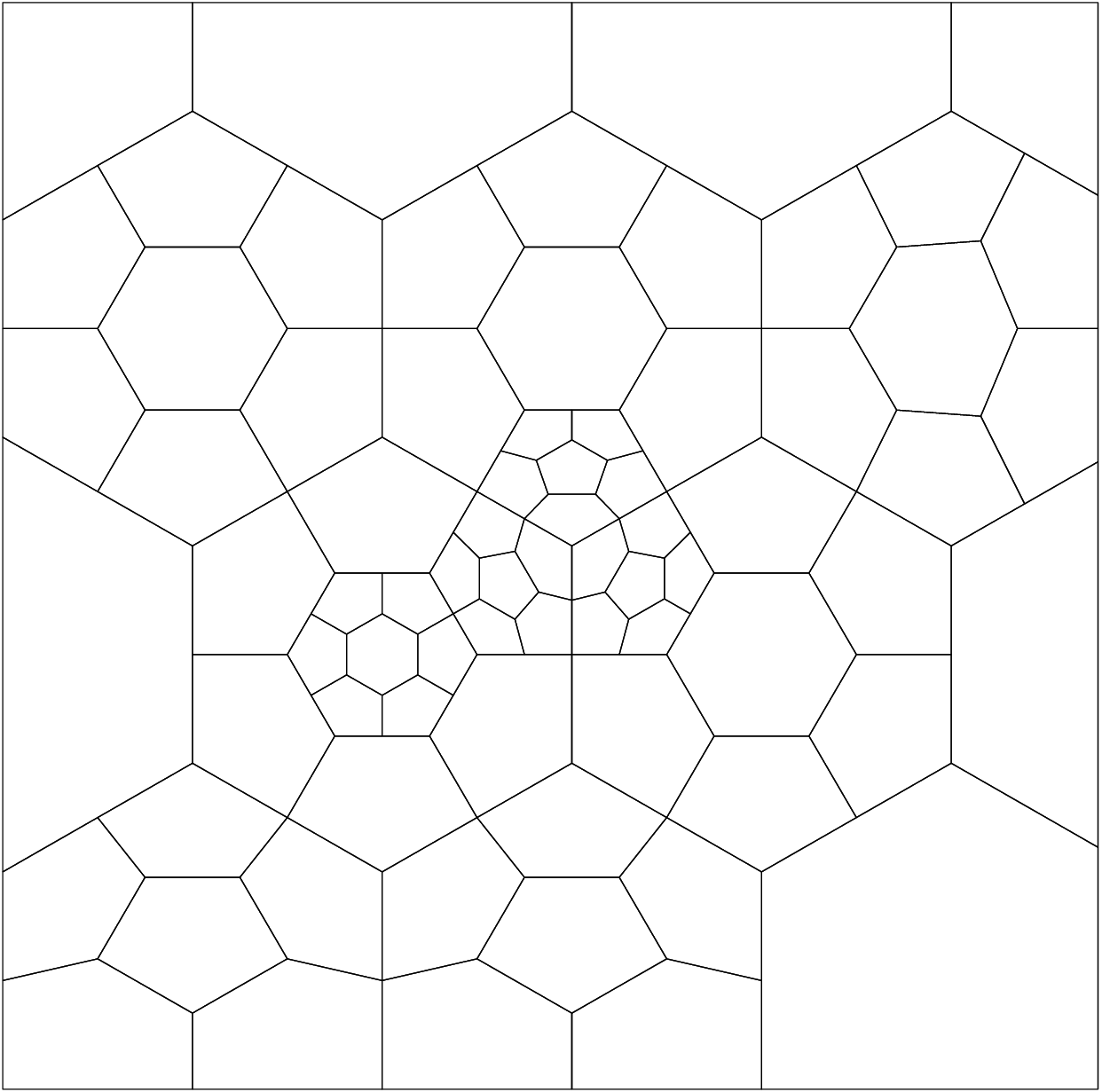}
    
    \small (b) Mesh after $4$ iterations.
  \end{minipage}
  \hfill
  \begin{minipage}{0.32\textwidth}
    \centering
    \includegraphics[width=\linewidth]{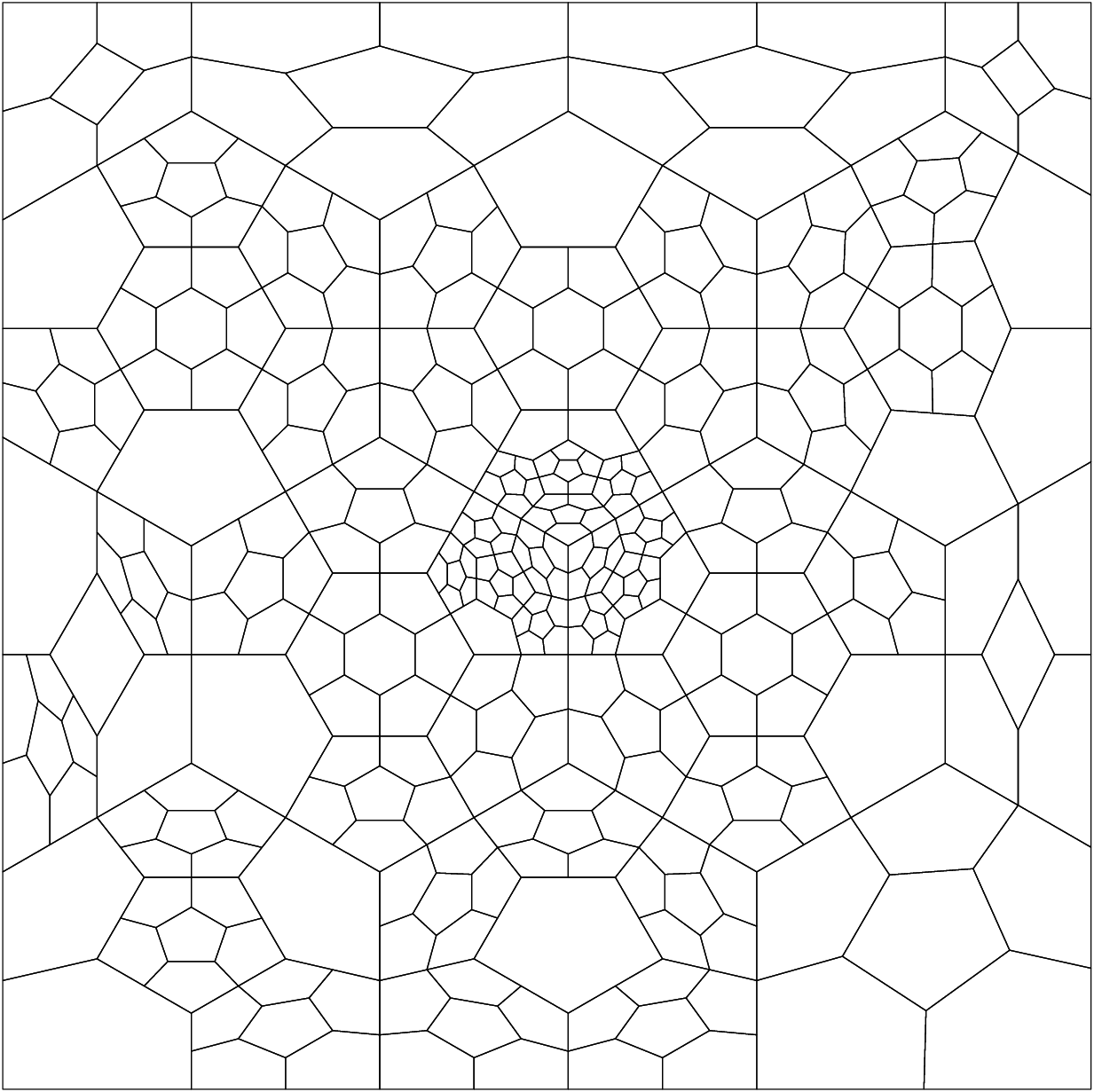}
    
    \small (c) Mesh after $8$ iterations.
  \end{minipage}
  \caption{Adaptive mesh refinement for Example 1.}
  \label{fig:ex1_mesh}
\end{figure}

\begin{figure}[htbp]
  \centering
  \includegraphics[width=7cm]{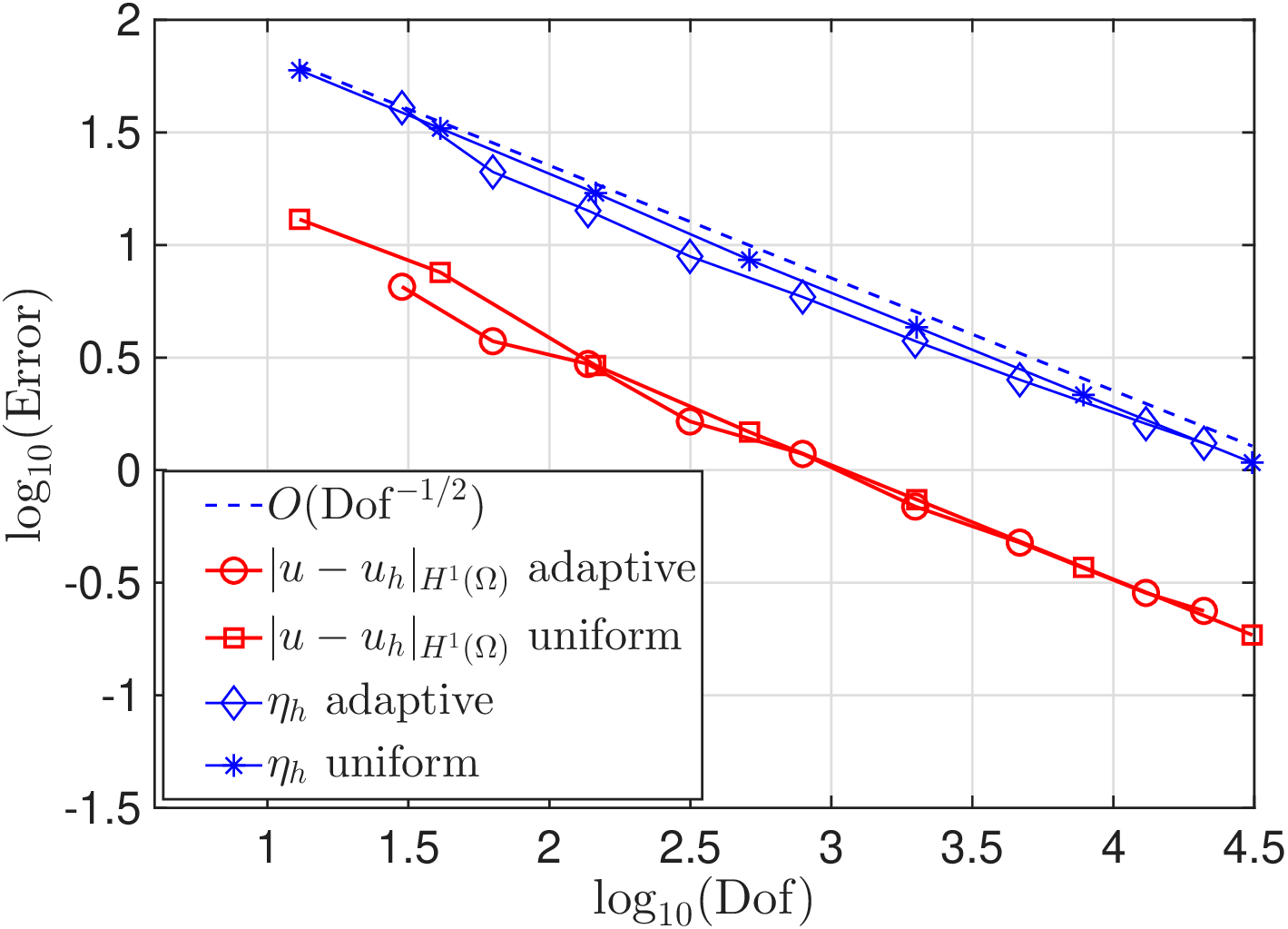}
  \caption{Convergence history in Example 1.}
  \label{fig:ex1_conv}
\end{figure}

\begin{table}[htbp]
  \centering
  \caption{Errors and estimators for uniform and adaptive refinements in Example 1}
  \label{tab:ex1_compare}
  \begin{tabular}{ccccccc}
    \toprule
    \multicolumn{3}{c}{Uniform mesh} & \multicolumn{1}{c}{} & \multicolumn{3}{c}{Adaptive mesh} \\
    \cmidrule(lr){1-3}\cmidrule(lr){5-7}
    $\mathrm{Dof}$ & $|u-u_h|_{1,\Omega}$ & $\eta_h$
    & &
    $\mathrm{Dof}$ & $|u-u_h|_{1,\Omega}$ & $\eta_h$ \\
    \midrule
        41    & 7.5609 & 32.9851 &&    53    & 4.7584 & 28.2927 \\
       146    & 2.9145 & 17.0061 &&   137    & 2.9623 & 14.2203 \\
       512    & 1.4766 &  8.5969 &&   568    & 1.4183 &  7.0823 \\
      2008    & 0.7399 &  4.3122 &&  1983    & 0.6890 &  3.7446 \\
      7840    & 0.3705 &  2.1593 &&  8087    & 0.3361 &  1.9046 \\
     31090    & 0.1853 &  1.0802 && 20957    & 0.2371 &  1.3168 \\
    \bottomrule
  \end{tabular}
\end{table}

The solution has a pronounced local peak near the center of the domain. As shown in Figure~\ref{fig:ex1_mesh}, the adaptive meshes refine progressively in this region and remain relatively coarse elsewhere. The convergence curves in Figure~\ref{fig:ex1_conv} indicate that both the $H^1$-error and the estimator decay essentially with the reference rate $\mathrm{Dof}^{-1/2}$. Table~\ref{tab:ex1_compare} further shows that adaptive refinement yields smaller errors and estimators than uniform refinement for comparable degrees of freedom.

\subsection{Example 2: an interior-layer problem on the unit square}

Let $\Omega=(0,1)^2$ and choose
\[
u(x,y)
=
x(1-x)y(1-y)\arctan(60(r-1)),
\quad
r^2=(x-1.25)^2+(y+0.25)^2 .
\]
The right-hand side is determined by $-\Delta u=f$, and homogeneous Dirichlet boundary data are prescribed on $\partial\Omega$.

\begin{figure}[htbp]
  \centering
  \begin{minipage}{0.32\textwidth}
    \centering
    \includegraphics[width=\linewidth]{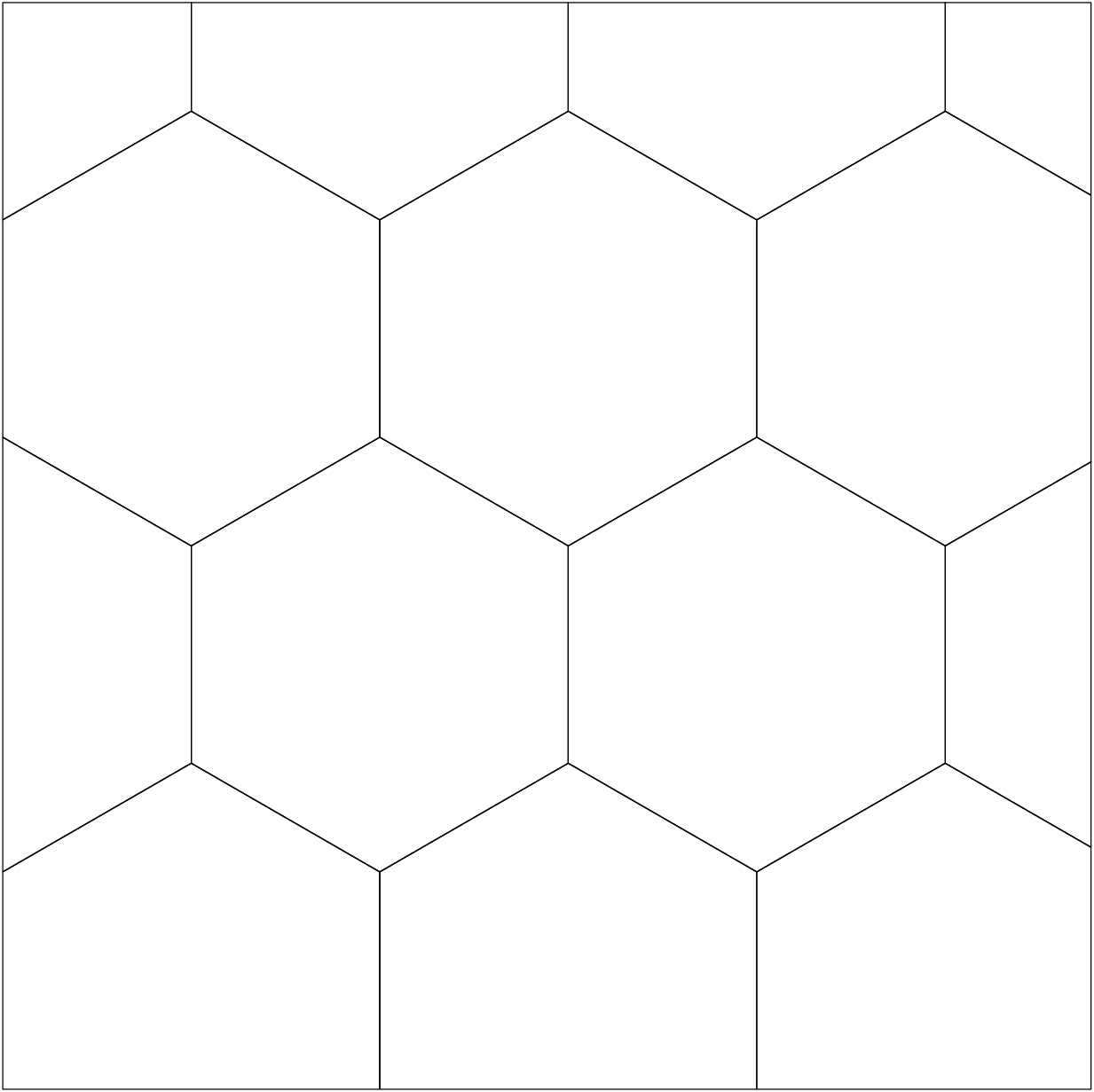}

    \small (a) Initial mesh. 
  \end{minipage}
  \hfill
  \begin{minipage}{0.32\textwidth}
    \centering
    \includegraphics[width=\linewidth]{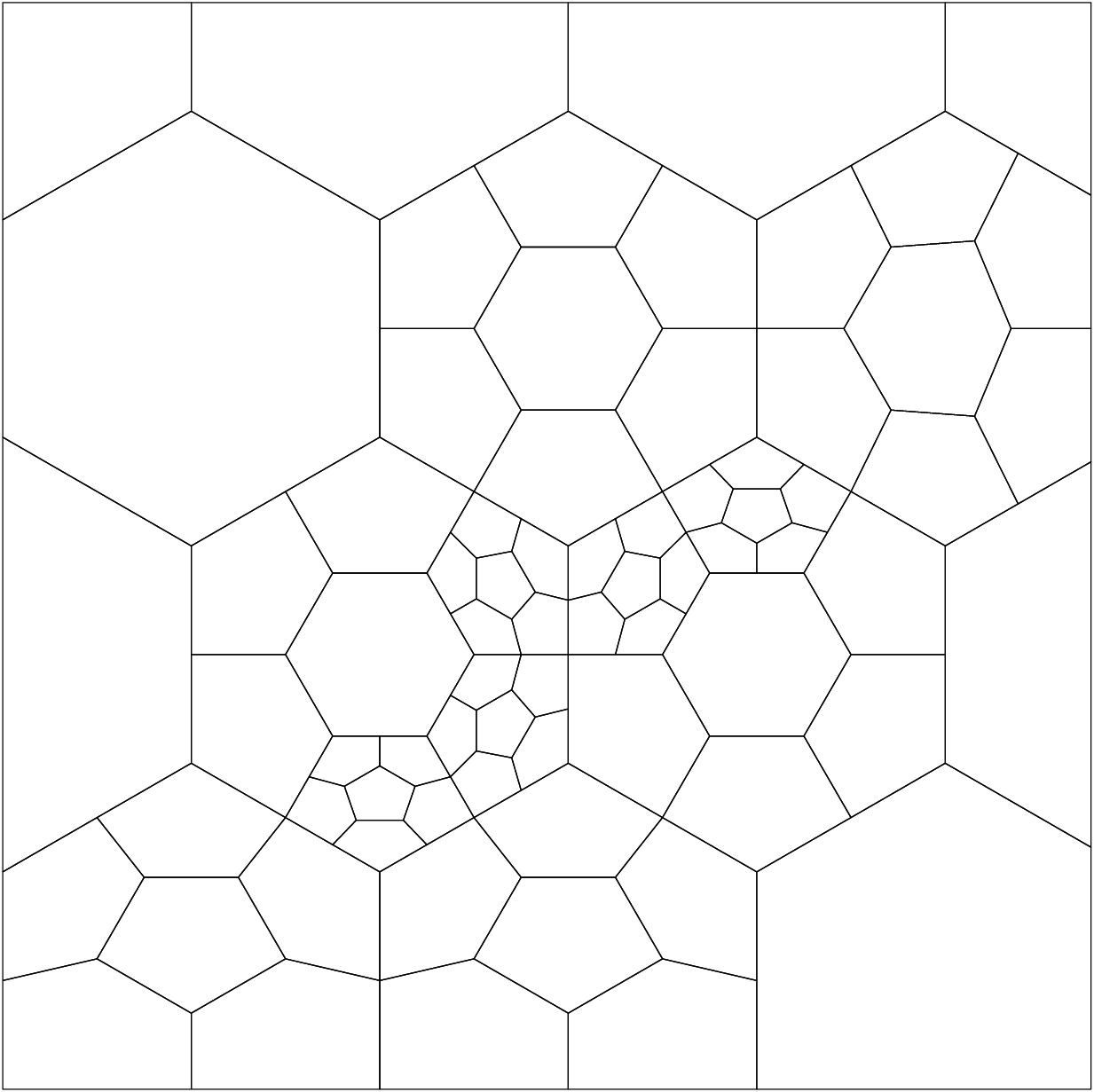}

    \small (b) Mesh after $4$ iterations.
  \end{minipage}
  \hfill
  \begin{minipage}{0.32\textwidth}
    \centering
    \includegraphics[width=\linewidth]{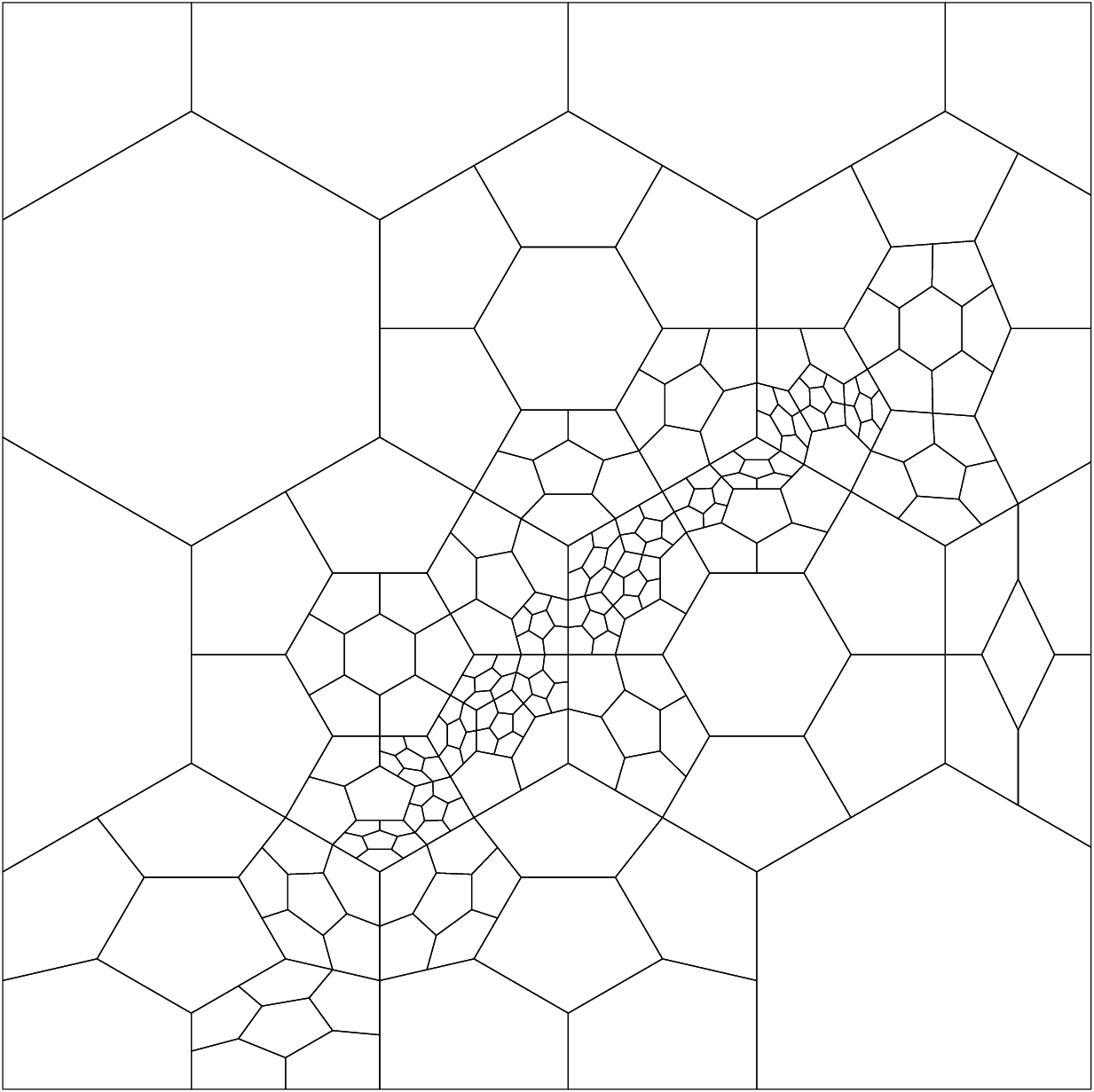}

    \small (c) Mesh after $8$ iterations.
  \end{minipage}
  \caption{Adaptive mesh refinement in Example 2.}
  \label{fig:ex2_mesh}
\end{figure}

\begin{figure}[htbp]
  \centering
  \begin{minipage}{0.48\textwidth}
    \centering
    \includegraphics[width=\linewidth]{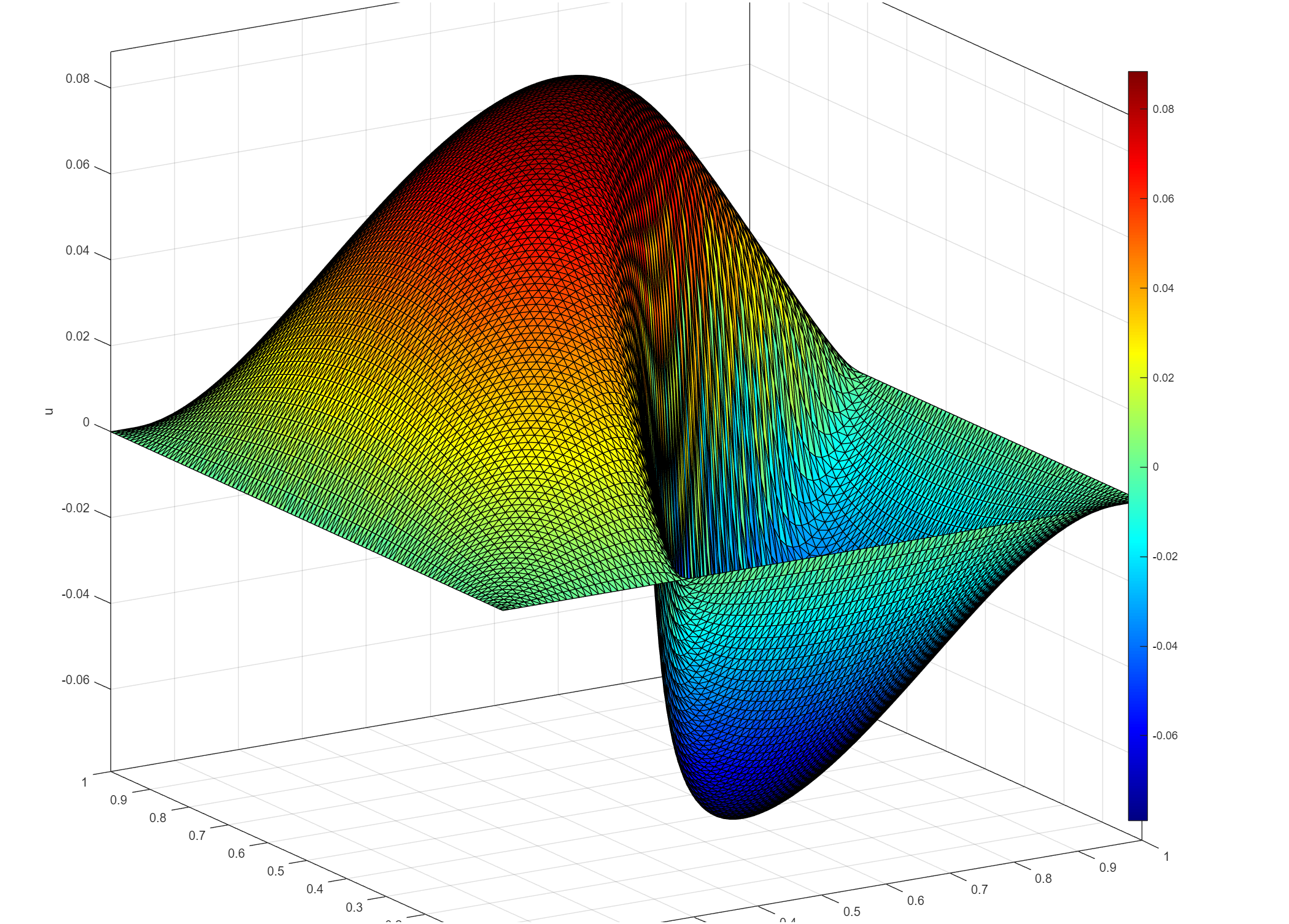}
    
    \vspace{0.15cm}
    \small (a) Exact solution
  \end{minipage}
  \hfill
  \begin{minipage}{0.38\textwidth}
    \centering
    \includegraphics[width=\linewidth]{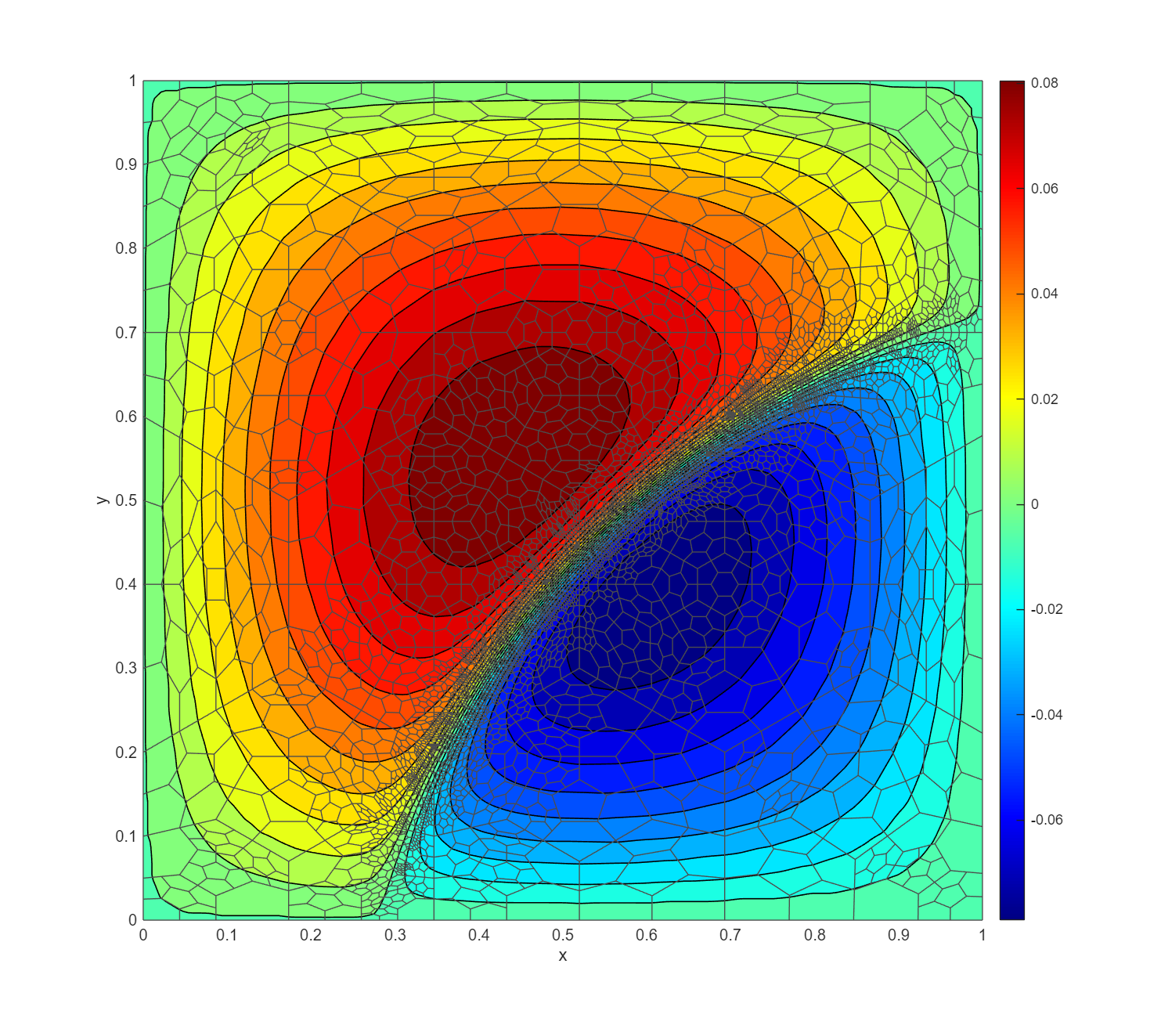}
    
    \vspace{0.15cm}
    \small (b) Numerical solution
  \end{minipage}
  \caption{Exact and numerical solutions in Example 2.}
  \label{fig:ex2_sol}
\end{figure}

\begin{figure}[htbp]
  \centering
  \includegraphics[width=7cm]{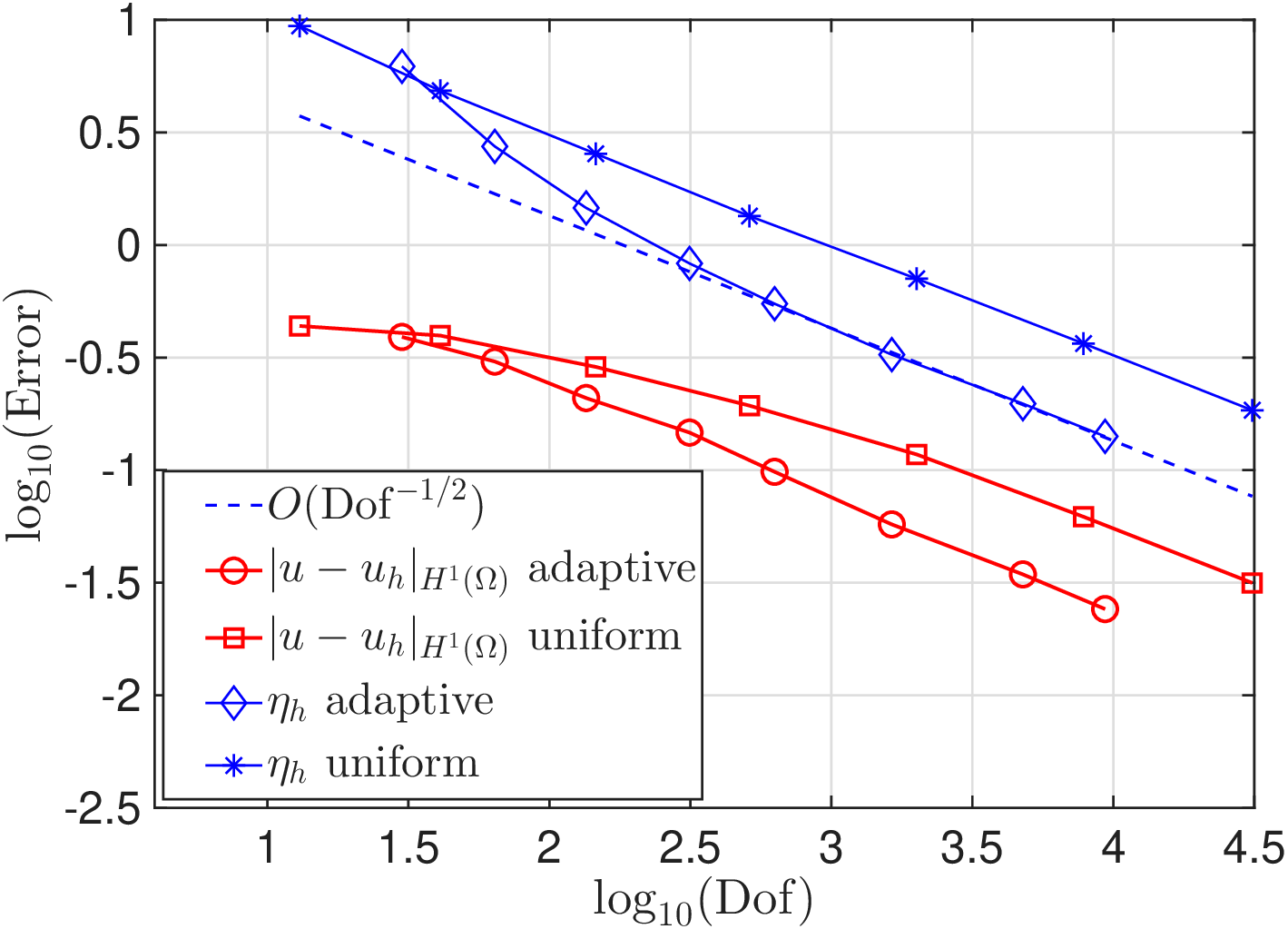}
  \caption{Convergence history in Example 2.}
  \label{fig:ex2_conv}
\end{figure}

\begin{table}[htbp]
  \centering
  \caption{Convergence of errors and error estimators in Example 2.}
  \label{tab:ex2_compare}
  \begin{tabular}{ccccccc}
    \toprule
    \multicolumn{3}{c}{Uniform mesh} & \multicolumn{1}{c}{} & \multicolumn{3}{c}{Adaptive mesh} \\
    \cmidrule(lr){1-3}\cmidrule(lr){5-7}
    $\mathrm{Dof}$ & $|u-u_h|_{1,\Omega}$ & $\eta_h$
    & &
    $\mathrm{Dof}$ & $|u-u_h|_{1,\Omega}$ & $\eta_h$ \\
    \midrule
        41    & 0.6548 & 3.3677 &&    49   & 0.4601 & 2.1687 \\
       146    & 0.3565 & 1.9137 &&   130   & 0.2193 & 1.1093 \\
       512    & 0.1881 & 1.0471 &&   460   & 0.0976 & 0.5499 \\
      2008    & 0.0968 & 0.5526 &&  1958   & 0.0471 & 0.2743 \\
      7840    & 0.0491 & 0.2843 &&  6148   & 0.0280 & 0.1651 \\
     31090    & 0.0248 & 0.1445 &&  8922   & 0.0238 & 0.1429 \\
    \bottomrule
  \end{tabular}
\end{table}

This example exhibits a sharp internal layer near the circular arc. Figure~\ref{fig:ex2_mesh} shows that the adaptive meshes are concentrated in the region where the solution varies rapidly, while the smoother region remains coarse. The numerical solution in Figure~\ref{fig:ex2_sol} agrees well with the exact one. The convergence curves in Figure~\ref{fig:ex2_conv} again show decay close to $\mathrm{Dof}^{-1/2}$, and Table~\ref{tab:ex2_compare} confirms that adaptive refinement achieves higher accuracy than uniform refinement at comparable cost.

\subsection{Example 3: a corner singularity on the L-shaped domain}

Consider the L-shaped domain $\Omega=(-1,1)^2\setminus([0,1)\times(-1,0])$, and the exact solution
\[
u(r,\theta)=r^{2/3}\sin\Bigl(\frac{2\theta}{3}\Bigr),
\]
where $(r,\theta)$ are polar coordinates centered at the origin. Since $u$ is harmonic away from the re-entrant corner, one has $f\equiv0$, and the boundary data are given by the exact solution.

\begin{figure}[htbp]
  \centering
  \begin{minipage}{0.32\textwidth}
    \centering
    \includegraphics[width=\linewidth]{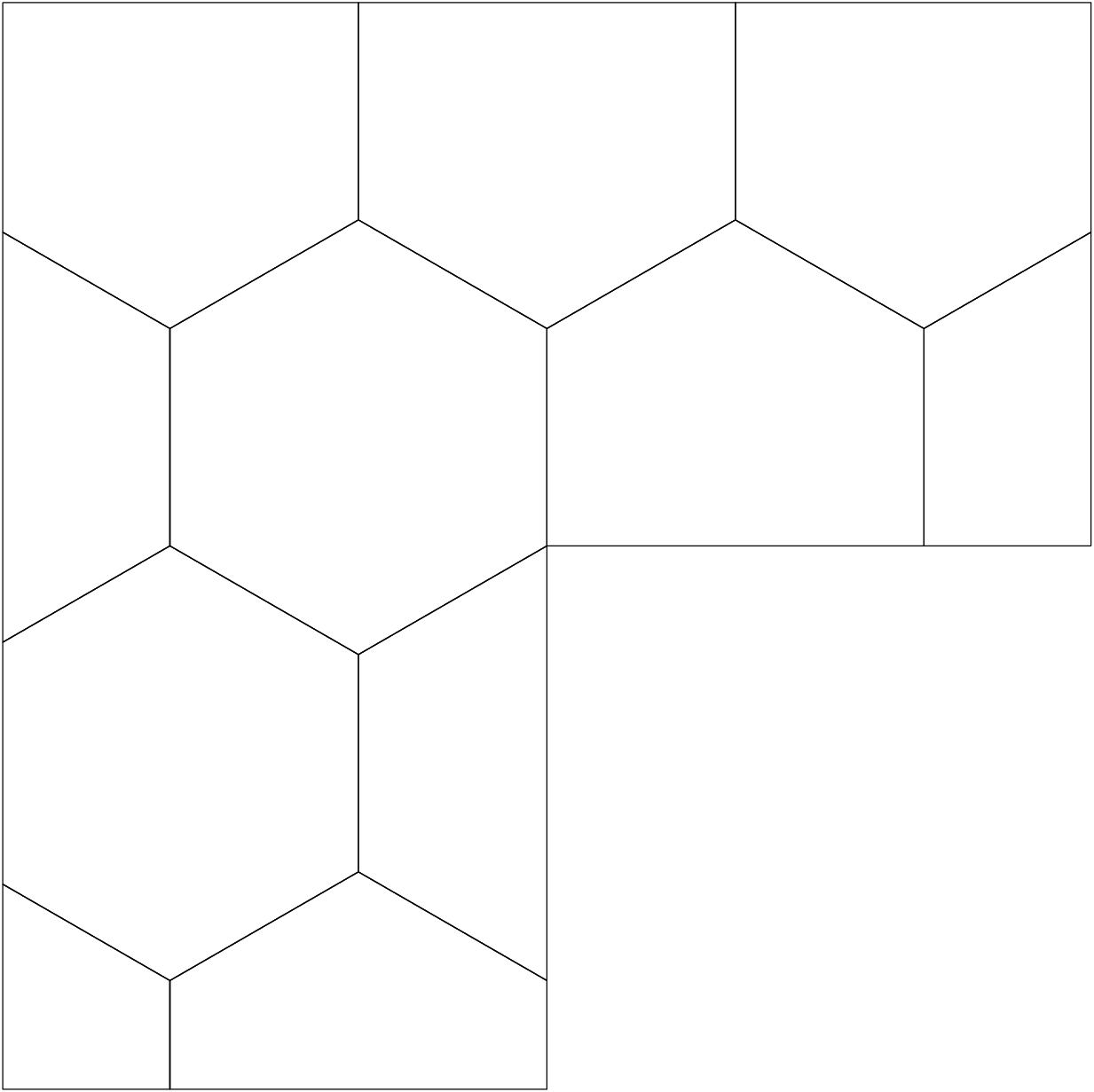}

    \small (a) Initial mesh.
  \end{minipage}
  \hfill
  \begin{minipage}{0.32\textwidth}
    \centering
    \includegraphics[width=\linewidth]{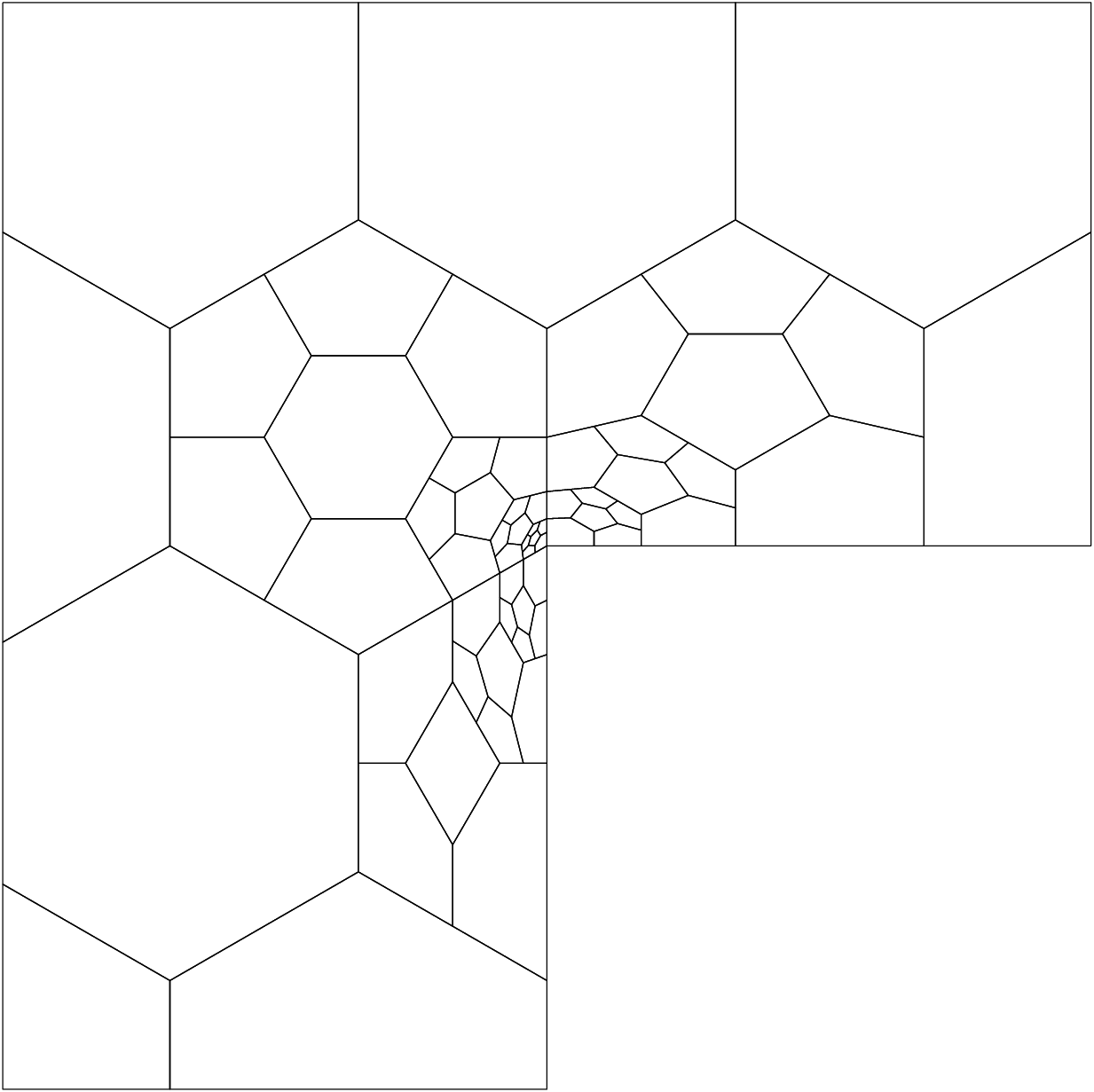}

    \small (b) Mesh after $4$ iterations.
  \end{minipage}
  \hfill
  \begin{minipage}{0.32\textwidth}
    \centering
    \includegraphics[width=\linewidth]{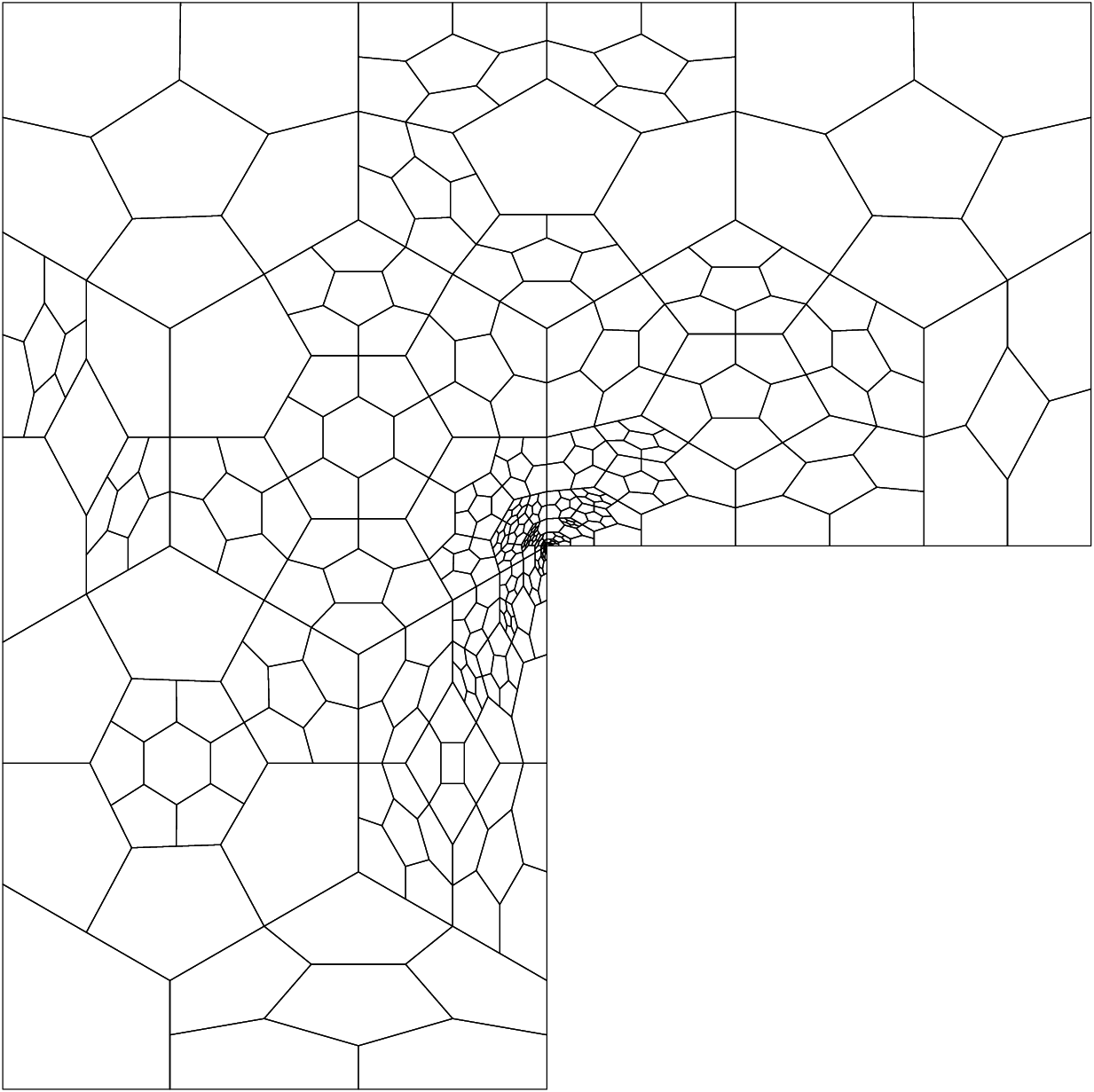}

    \small (c) Mesh after $8$ iterations.
  \end{minipage}
  \caption{Adaptive mesh refinement for Example 3}
  \label{fig:ex4_mesh}
\end{figure}

\begin{figure}[htbp]
  \centering
  \includegraphics[width=7cm]{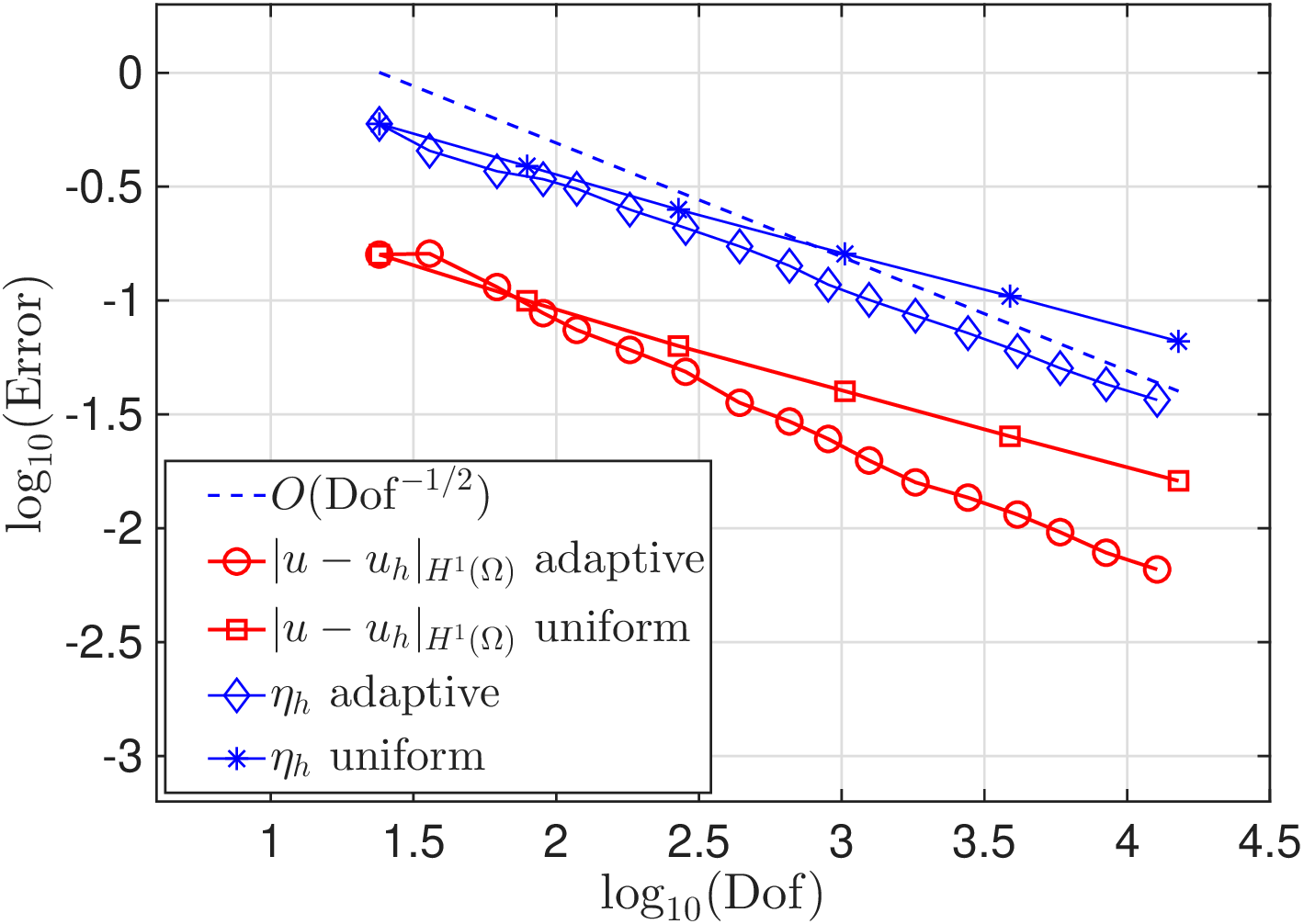}

  \caption{Convergence history in Example 3.}
  \label{fig:ex4_conv}
\end{figure}

\begin{table}[htbp]
  \centering
  \caption{Errors, estimators, and order of convergence in Example 3.}
  \vspace{0.2cm}
  \label{tab:ex4_compare}
  \begin{tabular}{cccc c cccc}
    \toprule
    \multicolumn{4}{c}{Uniform mesh} & &
    \multicolumn{4}{c}{Adaptive mesh} \\
    \cmidrule(lr){1-4}\cmidrule(lr){6-9}
    $\mathrm{Dof}$ & $|u-u_h|_{1,\Omega}$ & order & $\eta_h$
    & &
    $\mathrm{Dof}$ & $|u-u_h|_{1,\Omega}$ & order & $\eta_h$ \\
    \midrule
        24   & 0.1591 & --    & 0.5965 &&     24   & 0.1591 & --    & 0.5965 \\
        79   & 0.0998 & 0.391 & 0.3892 &&     90   & 0.0880 & 0.448 & 0.3410 \\
       268   & 0.0632 & 0.375 & 0.2507 &&    284   & 0.0487 & 0.516 & 0.2082 \\
      1026   & 0.0400 & 0.341 & 0.1603 &&    897   & 0.0247 & 0.591 & 0.1174 \\
      3887   & 0.0253 & 0.343 & 0.1045 &&   4127   & 0.0115 & 0.501 & 0.0600 \\
     15131   & 0.0161 & 0.331 & 0.0662 &&  12734   & 0.0066 & 0.493 & 0.0366 \\
    \bottomrule
  \end{tabular}
\end{table}

Because of the re-entrant corner at the origin, the solution does not belong to $H^2(\Omega)$, and uniform refinement suffers from reduced convergence. Figure~\ref{fig:ex4_mesh} shows that the adaptive meshes are strongly concentrated near the singular corner, while remaining coarse away from it.  The convergence curves in Figure~\ref{fig:ex4_conv} indicate that the adaptive method recovers an almost optimal rate close to $\mathrm{Dof}^{-1/2}$, whereas uniform refinement exhibits a slower decay, approximately $\mathrm{Dof}^{-1/3}$. This is also reflected in Table~\ref{tab:ex4_compare}, which confirms the improved efficiency of the adaptive procedure for corner singularities.


\section*{Acknowledgments} 
This work was supported by the National Natural Science Foundation of China under grant 12471346.

\bibliographystyle{plain}

\begin{thebibliography}{10}

\bibitem{AinsworthOden2000}
Mark Ainsworth and J.~Tinsley Oden.
\newblock {\em A posteriori error estimation in finite element analysis}.
\newblock Pure and Applied Mathematics (New York). Wiley-Interscience [John Wiley \& Sons], New York, 2000.

\bibitem{ArnoldBrezziCockburnMarini2001}
Douglas~N. Arnold, Franco Brezzi, Bernardo Cockburn, and L.~Donatella Marini.
\newblock Unified analysis of discontinuous {G}alerkin methods for elliptic problems.
\newblock {\em SIAM J. Numer. Anal.}, 39(5):1749--1779, 2001/02.

\bibitem{Bank1996}
Randolph~E. Bank.
\newblock Hierarchical bases and the finite element method.
\newblock In {\em Acta numerica, 1996}, volume~5 of {\em Acta Numer.}, pages 1--43. Cambridge Univ. Press, Cambridge, 1996.

\bibitem{BankLi2019}
Randolph~E. Bank and Yuwen Li.
\newblock Superconvergent recovery of {R}aviart-{T}homas mixed finite elements on triangular grids.
\newblock {\em J. Sci. Comput.}, 81(3):1882--1905, 2019.

\bibitem{Bank1993}
Randolph~E. Bank and R.~Kent Smith.
\newblock A posteriori error estimators based on hierarchical bases.
\newblock {\em SIAM J. Numer. Anal.}, 30(4):921--935, 1993.

\bibitem{BeckerRannacher2001}
Roland Becker and Rolf Rannacher.
\newblock An optimal control approach to a posteriori error estimation in finite element methods.
\newblock {\em Acta Numer.}, 10:1--102, 2001.

\bibitem{BeiraoBrezziMariniRusso2023}
Lourenco Beir\~ao~da Veiga, Franco Brezzi, L.~Donatella Marini, and Alessandro Russo.
\newblock The virtual element method.
\newblock {\em Acta Numer.}, 32:123--202, 2023.

\bibitem{BeiraoManzini2015}
Lourenco Beir{\~a}o~da Veiga and Gianmarco Manzini.
\newblock Residual a posteriori error estimation for the virtual element method for elliptic problems.
\newblock {\em ESAIM Math. Model. Numer. Anal.}, 49(2):577--599, 2015.

\bibitem{Beirao2019}
Lourenco Beir{\~a}o~da Veiga, Gianmarco Manzini, and Lorenzo Mascotto.
\newblock A posteriori error estimation and adaptivity in hp virtual elements.
\newblock {\em Numer. Math.}, 143(1):139--175, 2019.

\bibitem{BerroneBorio2017}
Stefano Berrone and Andrea Borio.
\newblock A residual {\it a posteriori} error estimate for the {V}irtual {E}lement {M}ethod.
\newblock {\em Math. Models Methods Appl. Sci.}, 27(8):1423--1458, 2017.

\bibitem{BertrandCarstensenGrassleTran2023}
Fleurianne Bertrand, Carsten Carstensen, Benedikt Gr\"a\ss~le, and Ngoc~Tien Tran.
\newblock Stabilization-free {HHO} a posteriori error control.
\newblock {\em Numer. Math.}, 154(3-4):369--408, 2023.

\bibitem{Braess2009}
Dietrich Braess, Veronika Pillwein, and Joachim Sch{\"o}berl.
\newblock Equilibrated residual error estimates are p-robust.
\newblock {\em Comput. Methods Appl. Mech. Engrg.}, 198(13-14):1189--1197, 2009.

\bibitem{Cangiani2017}
Andrea Cangiani, Emmanuil~H. Georgoulis, Tristan Pryer, and Oliver~J. Sutton.
\newblock A posteriori error estimates for the virtual element method.
\newblock {\em Numer. Math.}, 137(4):857--893, 2017.

\bibitem{ChaumontFrelet2025}
Th{\'e}ophile Chaumont-Frelet.
\newblock A posteriori error estimates for the finite element discretization of second-order {PDE}s set in unbounded domains.
\newblock arXiv preprint arXiv:2503.22297, 2025.

\bibitem{ChaumontGedickeMascotto2025}
Th{\'e}ophile Chaumont-Frelet, Joscha Gedicke, and Lorenzo Mascotto.
\newblock Generalised gradients for virtual elements and applications to a posteriori error analysis.
\newblock {\em arXiv preprint}, arXiv:2408.03148, 2025.

\bibitem{Chen2014}
Long Chen, Junping Wang, and Xiu Ye.
\newblock A posteriori error estimates for weak {G}alerkin finite element methods for second order elliptic problems.
\newblock {\em J. Sci. Comput.}, 59(2):496--511, 2014.

\bibitem{Chen2022}
Xinjiang Chen and Yanqiu Wang.
\newblock A conforming quadratic polygonal element and its application to {S}tokes equations.
\newblock {\em J. Comput. Math.}, 40(4):624--648, 2022.

\bibitem{CockburnGopalakrishnanLazarov2009}
Bernardo Cockburn, Jayadeep Gopalakrishnan, and Raytcho Lazarov.
\newblock Unified hybridization of discontinuous {G}alerkin, mixed, and continuous {G}alerkin methods for second order elliptic problems.
\newblock {\em SIAM J. Numer. Anal.}, 47(2):1319--1365, 2009.

\bibitem{DassiGedickeMascotto2022}
Franco Dassi, Joscha Gedicke, and Lorenzo Mascotto.
\newblock Adaptive virtual element methods with equilibrated fluxes.
\newblock {\em Appl. Numer. Math.}, 173:249--278, 2022.

\bibitem{DiPietro2023}
Daniele~A Di~Pietro and J{\'e}r{\^o}me Droniou.
\newblock An arbitrary-order discrete de {R}ham complex on polyhedral meshes: Exactness, {P}oincar{\'e} inequalities, and consistency.
\newblock {\em Foundations of Computational Mathematics}, 23(1):85--164, 2023.

\bibitem{DiPietroErn2015}
Daniele~A Di~Pietro and Alexandre Ern.
\newblock Hybrid high-order methods for variable-diffusion problems on general meshes.
\newblock {\em Comptes Rendus Math{\'e}matique}, 353(1):31--34, 2015.

\bibitem{Du2022}
Xiaoxiao Du, Wei Wang, Gang Zhao, Jiaming Yang, Mayi Guo, and Ran Zhang.
\newblock Virtual element method with adaptive refinement for problems of two-dimensional complex topology models from an engineering perspective.
\newblock {\em Comput. Mech.}, 70(3):581--606, 2022.

\bibitem{ErnVohralik2015}
Alexandre Ern and Martin Vohral\'ik.
\newblock Polynomial-degree-robust a posteriori estimates in a unified setting for conforming, nonconforming, discontinuous {G}alerkin, and mixed discretizations.
\newblock {\em SIAM J. Numer. Anal.}, 53(2):1058--1081, 2015.

\bibitem{Floater2015ref}
Michael~S. Floater.
\newblock Generalized barycentric coordinates and applications.
\newblock {\em Acta Numerica}, 24:161--214, 2015.

\bibitem{Floater2010}
Michael~S. Floater and Ji{\v{r}}{\'\i} Kosinka.
\newblock On the injectivity of {W}achspress and mean value mappings between convex polygons.
\newblock {\em Adv. Comput. Math.}, 32(2):163--174, 2010.

\bibitem{Floater2016}
Michael~S. Floater and Ming-Jun Lai.
\newblock Polygonal spline spaces and the numerical solution of the {P}oisson equation.
\newblock {\em SIAM J. Numer. Anal.}, 54(2):797--824, 2016.

\bibitem{Gillette2012}
Andrew Gillette, Alexander Rand, and Chandrajit Bajaj.
\newblock Error estimates for generalized barycentric interpolation.
\newblock {\em Adv. Comput. Math.}, 37(3):417--439, 2012.

\bibitem{Hormann2008}
Kai Hormann and Natarajan Sukumar.
\newblock Maximum entropy coordinates for arbitrary polytopes.
\newblock {\em Comput. Graph. Forum}, 27(5):1513--1520, 2008.

\bibitem{Hoshina2018}
Thom{\'a}s Y.~S. Hoshina, Ivan F.~M. Menezes, and Anderson Pereira.
\newblock A simple adaptive mesh refinement scheme for topology optimization using polygonal meshes.
\newblock {\em J. Braz. Soc. Mech. Sci. Eng.}, 40(7):348, 2018.

\bibitem{Joshi2007}
Pushkar Joshi, Mark Meyer, Tony DeRose, Brian Green, and Tom Sanocki.
\newblock Harmonic coordinates for character articulation.
\newblock In {\em {ACM} {SIGGRAPH} 2007 Papers}, pages 71:1--71:9, New York, 2007. ACM.

\bibitem{Lai2016}
Ming-Jun Lai and George Slavov.
\newblock On recursive refinement of convex polygons.
\newblock {\em Comput. Aided Geom. Design}, 45:83--90, 2016.

\bibitem{Li2019}
Hengguang Li, Lin Mu, and Xiu Ye.
\newblock A posteriori error estimates for the weak {G}alerkin finite element methods on polytopal meshes.
\newblock {\em Commun. Comput. Phys.}, 26(2):558--578, 2019.

\bibitem{Li2018SINUM}
Yu-Wen Li.
\newblock Global superconvergence of the lowest-order mixed finite element on mildly structured meshes.
\newblock {\em SIAM J. Numer. Anal.}, 56(2):792--815, 2018.

\bibitem{Li2021JSCb}
Yuwen Li.
\newblock Recovery-based a posteriori error analysis for plate bending problems.
\newblock {\em J. Sci. Comput.}, 88(3):Paper No. 77, 26, 2021.

\bibitem{Li2025arxiv}
Yuwen Li.
\newblock Some p-robust a posteriori error estimates based on auxiliary spaces.
\newblock {\em arXiv preprint}, arXiv:2511.06603, 2025.

\bibitem{LiShui2026}
Yuwen Li and Han Shui.
\newblock Smoother-type a posteriori error estimates for finite element methods.
\newblock {\em Comput. Methods Appl. Mech. Engrg.}, 453:Paper No. 118847, 21, 2026.

\bibitem{LiZikatanov2021CAMWA}
Yuwen Li and Ludmil Zikatanov.
\newblock A posteriori error estimates of finite element methods by preconditioning.
\newblock {\em Comput. Math. Appl.}, 91:192--201, 2021.

\bibitem{LiZikatanov2025mcom}
Yuwen Li and Ludmil Zikatanov.
\newblock Nodal auxiliary a posteriori error estimates.
\newblock {\em Math. Comp.}, 2025, DOI: 10.1090/mcom/4141.

\bibitem{Loop1989}
Charles~T. Loop and Tony~D. DeRose.
\newblock A multisided generalization of {B}\'ezier surfaces.
\newblock {\em ACM Trans. Graph.}, 8(3):204--234, 1989.

\bibitem{Malsch2004}
Elisabeth~Anna Malsch and Gautam Dasgupta.
\newblock Interpolations for temperature distributions: A method for all non-concave polygons.
\newblock {\em Int. J. Solids Struct.}, 41(8):2165--2188, 2004.

\bibitem{Meyer2002}
Mark Meyer, Alan Barr, Haeyoung Lee, and Mathieu Desbrun.
\newblock Generalized barycentric coordinates on irregular polygons.
\newblock {\em J. Graph. Tools}, 7(1):13--22, 2002.

\bibitem{NguyenXuan2017}
H.~Nguyen-Xuan.
\newblock A polytree-based adaptive polygonal finite element method for topology optimization.
\newblock {\em Internat. J. Numer. Methods Engrg.}, 110(10):972--1000, 2017.

\bibitem{PorwalSingla2025}
Kamana Porwal and Ritesh Singla.
\newblock A posteriori error analysis of hybrid high-order methods for the elliptic obstacle problem.
\newblock {\em J. Sci. Comput.}, 102(1):15, 2025.

\bibitem{Leon2014b}
Daniel~W. Spring, Sofie~E. Leon, and Glaucio~H. Paulino.
\newblock Unstructured polygonal meshes with adaptive refinement for the numerical simulation of dynamic cohesive fracture.
\newblock {\em Int. J. Fract.}, 189:33--57, 2014.

\bibitem{Tian2025}
Pengjie Tian and Yanqiu Wang.
\newblock Upper bounds of higher-order derivatives for {W}achspress coordinates on polytopes.
\newblock {\em IMA J. Numer. Anal.}, page draf063, 2025.

\bibitem{Verfurth2003}
R\"udiger Verf\"{u}rth.
\newblock A posteriori error estimates for finite element discretizations of the heat equation.
\newblock {\em Calcolo}, 40(3):195--212, 2003.

\bibitem{Wachspress1975}
Eugene~L. Wachspress.
\newblock {\em A rational finite element basis}, volume Vol. 114 of {\em Mathematics in Science and Engineering}.
\newblock Academic Press, Inc. [Harcourt Brace Jovanovich, Publishers], New York-London, 1975.

\bibitem{WangYe2014}
Junping Wang and Xiu Ye.
\newblock A weak {G}alerkin mixed finite element method for second order elliptic problems.
\newblock {\em Math. Comp.}, 83(289):2101--2126, 2014.

\bibitem{Wang2021}
Qiming Wang and Zhaojie Zhou.
\newblock Adaptive virtual element method for optimal control problem governed by general elliptic equation.
\newblock {\em J. Sci. Comput.}, 88(1):14, 2021.

\bibitem{Warren1996}
Joe Warren.
\newblock Barycentric coordinates for convex polytopes.
\newblock {\em Adv. Comput. Math.}, 6(1):97--108, 1996.

\bibitem{Warren2007}
Joe~D. Warren, Scott Schaefer, Anil~N. Hirani, and Mathieu Desbrun.
\newblock Barycentric coordinates for convex sets.
\newblock {\em Adv. Comput. Math.}, 27(3):319--338, 2007.

\bibitem{Weber2012}
Ofir Weber, Roi Poranne, and Craig Gotsman.
\newblock Biharmonic coordinates.
\newblock {\em Comput. Graph. Forum}, 31(8):2409--2422, 2012.

\bibitem{ZhangNaga2005}
Zhimin Zhang and Ahmed Naga.
\newblock A new finite element gradient recovery method: superconvergence property.
\newblock {\em SIAM J. Sci. Comput.}, 26(4):1192--1213, 2005.

\bibitem{Zienkiewicz1992b}
Olgierd~Cecil Zienkiewicz and Jianzhong Zhu.
\newblock The superconvergent patch recovery and a posteriori error estimates. part ii. error estimates and adaptivity.
\newblock {\em Internat. J. Numer. Methods Engrg.}, 33(7):1365--1382, 1992.

\end{thebibliography}

\end{document}